\documentclass[reqno]{amsart}
\usepackage{amssymb}
\usepackage{amsfonts}
\usepackage{amsmath}
\usepackage{amsthm, amsmath, amssymb, enumerate}
\usepackage{amssymb, enumitem}
\usepackage{bbm}
\usepackage{amscd}
\usepackage[all]{xy}
\usepackage[hidelinks]{hyperref}
\usepackage{amsmath}
\usepackage{amssymb}
\usepackage{amsthm}
\usepackage[sort&compress,numbers]{natbib}
\usepackage[left=2cm,right=2cm,bottom=3cm,top=3cm]{geometry}
\usepackage{tikz}
\usepackage{tikz-cd}
\usepackage{wrapfig}
\usepackage{epigraph}
\usepackage{dirtytalk}
\usepackage{csquotes}

\newtheorem{theorem}{Theorem}[section]

\newtheorem{proposition}[theorem]{Proposition}
\newtheorem{lemma}[theorem]{Lemma}
\newtheorem{corollary}[theorem]{Corollary}
\theoremstyle{definition} 
\newtheorem*{claim*}{Claim}
\newtheorem{definition}[theorem]{Definition}

\newtheorem{remark}[theorem]{Remark}

\AtBeginDocument{   \def\MR#1{}}

\begin{document}
\title[Polarities, voltages, and capacitors]{Polarities, voltages, and
capacitors:\\
a categorical approach to hulls, envelopes, and completions}
\author{Ivan Di Liberti}
\address{Department of Philosophy, Linguistics and Theory of Science,
University of Gothenburg, Gothenburg, Sweden.}
\email{diliberti.math@gmail.com}
\urladdr{http://www.lupini.org/}
\author{Martino Lupini}
\address{Dipartimento di Matematica, Universit\`{a} di Bologna, Piazza di
Porta S. Donato, 5, 40126 Bologna,\ Italy}
\email{martino.lupini@unibo.it}
\urladdr{http://www.lupini.org/}
\thanks{The authors were partially supported by the Starting Grant 101077154
\textquotedblleft Definable Algebraic Topology\textquotedblright\ from the
European Research Council, the Gruppo Nazionale per le Strutture Algebriche,
Geometriche e le loro Applicazioni (GNSAGA) of the Istituto Nazionale di
Alta Matematica (INDAM), and the University of Bologna. Part of this work
was done during a visit of the M.L. to Chalmers University of Technology and
the University of G\"{o}thenburg. The hospitality of these institutions is
gratefully acknowledged.}
\subjclass[2000]{Primary 18G05, 18A40; Secondary 06B23, 06B35, 46L05}
\keywords{Monopole, polarity, voltage, capacitor, semi-abelian category,
completion, hull, envelope, Boolean algebra, ordered set, C*-algebra, ring
with local units, MacNeille completion, multiplier ring, multiplier
C*-algebra}
\date{\today }

\begin{abstract}
This article provides a general framework in the context of category theory
where one can recognize as particular instances of the same abstract
construction several notions of completion, envelope, and hull, such as the
Boolean algebra completion of a Boolean algebra, the Dedekind--MacNeille
completion of an ordered set, the multiplier ring of a ring, the multiplier
algebra and the von Neumann envelope of a C*-algebra.

Towards our goal, we lay the foundations of \emph{polarized }category
theory, which is a refinement of classical category theory where categories
are endowed with two distinguished classes of \emph{positive }and \emph{%
negative }arrows. We define in this context the notion of \emph{polarity},
and \emph{voltage}. We explain how a voltage can be created through a \emph{%
capacitor}, which is essentially a polarized version of the notion of
reflective subcategory. In particular, this produces a \emph{completion
functor} (which in the classical case is just the reflector) which assigns
to each object its completion or hull. These applies even when the
completion is not (and cannot be) given by a functor on the whole category,
as it is most often the case.

In this framework, we obtain a general theorem ensuring the existence and
uniqueness of a functorial completion functor. The corresponding completion
of each object is characterized by its two universal properties with respect
to positive and negative arrows.
\end{abstract}

\maketitle


\begin{displayquote}
\textit{Danger! High voltage}
\end{displayquote}

\setcounter{tocdepth}{1}
\tableofcontents
\section{Introduction}

One of the main motivations for the development of category theory, from its
very inception in the work of Eilenberg and Mac Lane \cite%
{eilenberg_general_1945}, is to provide a framework where notions and
constructions in mathematics can be appropriately formalized and understood.
In the case of Eilenberg and Mac Lane, the notion under consideration was
the \emph{naturality }of the exact sequence in the Universal Coefficient
Theorem relating homology and cohomology of spaces \cite%
{eilenberg_group_1942}. The formalization of this notion led them to the
introduction of \emph{categories}, \emph{functors}, and \emph{natural
transformations}; see \cite[Chapter 2]{kromer_tool_2007} for more historical
information on the origins of category theory. From these early days,
category theory has had a host of applications across mathematics (not to
mention computer science), as many of its notions and constructions have
been recognized as instances of categories, functors, natural
transformations, and other fundamental categorical notions.

More recent research in category theory has focussed on further refining and
generalizing its tools, to make them applicable in contexts where they could
not be applied in a straightforward manner. For example, \emph{higher
category theory }and $\infty $-\emph{category theory} \cite%
{lurie_higher_2009}\emph{\ }became necessary to deal with contexts where
associativity and other identities only hold \emph{essentially}, i.e., up to
isomorphism or other higher order coherence requirements. Likewise, \emph{%
enriched category theory} \cite{kelly_basic_2005} provides a context where
hom-sets are endowed with structure, rendering themselves objects in a
category. (It is not coincidental that these two frameworks are intimately
connected.)

This work has a similar motivation, aiming at providing a framework where
important constructions from algebra and functional analysis, which do not
fit the traditional mold of categories--functors--natural transformations,
can be properly treated. These constructions include several \emph{%
completions},\emph{\ hulls}, \emph{envelopes} (injective or otherwise) and
other \emph{canonical extensions}.

These arise in a number of areas including algebra \cite%
{martinez_hull_2002,emmanouil_flat_2011,day_injectivity_1972,guo_relative_2024,sharpe_injective_1972,matlis_injective_1959,facchini_generalized_1994,matlis_injective_1959}%
, (pointfree) topology \cite%
{banaschewski_booleanization_1996,bhattacharjee_hull_2024,hadwin_injectivity_2011,raphael_essential_2005,lang_injective_2013,raphael_epimorphic_2000}%
, functional analysis \cite%
{oikhberg_injectivity_2018,cohen_injective_1964,isbell_injective_1964,lacey_injective_1969,isbell_three_1969}%
, operator algebras \cite%
{choi_injectivity_1977,ruan_injectivity_1989,hamana_injective_1979,hamana_injective_1979-1,hamana_injective_1978,hamana_injective_1985}%
, dynamical systems \cite%
{bryder_boundaries_2017,hamana_injective_2011,hamana_injective_1992}, and
the theory ordered algebraic structures \cite%
{martinez_hull_2002,conrad_essential_1971,anderson_essential_1979,hager_hulls_1999,carrera_hull_2011,rump_essential_2009}%
.

Examples include the \emph{completion }of a Boolean algebra, the MacNeille
completion of a poset, the lateral completion of an abelian lattice-ordered
group \cite{conrad_lateral_1969,rump_lateral_2009,bernau_lateral_1975}, the
essential closure of a non-Archimedean lattice-ordered group \cite%
{bernau_unique_1965,conrad_essential_1971}, the multiplier ring of a ring 
\cite{dauns_multiplier_1969}, the multiplier algebra of a C*-algebra \cite%
{busby_double_1968}, the enveloping von Neumann algebra of a C*-algebra \cite%
{pedersen_algebras_1979}, the injective envelope of a Banach space \cite%
{cohen_injective_1964}, of an operator system \cite{hamana_injective_1979},
of an operator space \cite{ruan_injectivity_1989}, of a C*-algebra \cite%
{hamana_injective_1979}, of a dynamical system \cite{hamana_injective_2011}.

It is well-known that these constructions are typically not functorial, and
the paper \cite{adamek_injective_2002} is devoted to present a large number
of instances of this phenomenon. Our approach stems from the observation
that these constructions, while generally not functorial, do give rise to
functors, one covariant and the other contravariant, when restricted to
suitable subcategories of the original category. This leads us to the
introduction of the fundamental concept of \emph{polarity}, which is simply
a category endowed with two distinguished subcategories, the positive charge
and the negative charge, subject to some natural requirements. A \emph{%
functor} between polarities will then be a pair of functors, one positive
and one negative, each defined on arrows of a given sign, and which can be
either covariant or contravariant. \emph{Natural transformations} between
functors also admit an obvious bipolar generalization. The classical triad
of categories--functors--natural transformations can be seen as a particular
instance of this framework, when all the arrows have the same charge. In
terms of these fundamental notions, we define \emph{voltages}: polarities
endowed with a distinguished endofunctor. (For comparison, monads are
structures of the same kind, being defined in terms of a category with an
endofunctor and natural transformations satisfying certain axioms \cite[%
Chapter 10]{awodey_category_2006}.) We also explain how polarities can be
seen as a particular instance of categories \emph{enriched }over the
monoidal category of polar sets (which are simply sets with two
distinguished subsets of \emph{positive }and \emph{negative }elements,
respectively). Likewise, functors between polarities, being either positive
or negative, can be seen as objects of a category enriched over \emph{signed
sets }(sets endowed with a binary \emph{partition}).

In this context we define the notion of \emph{capacitor}, which is a
forgetful\ functor between categories subject to certain axioms. These
axioms ensure the existence of a (partially defined) left adjoint, which is
the desired \emph{completion functor }associated with the capacitor, and
turns the coarser category into a voltage. In this case, we say that the
forgetful functor \emph{creates} hulls. Inclusion functors between
reflective subcategories can be seen as a particular case, and correspond to
the (rare) case in which the left adjoint exists \textquotedblleft
globally\textquotedblright . We will show that even in this more general
context one can always obtain a \emph{reflector}, although again only
partially defined.

As an upshot of our analysis, we also clarify the \emph{universal properties 
}that characterize the completion of a given object. There are indeed two of
such properties, one covariant and one contravariant, with respect to
positive and negative arrows, respectively. This is the fundamental reason
behind the introduction of polarities.

We conclude with explaining how the Boolean algebra completion of a Boolean
algebra, the MacNeille completion of a poset, the multiplier ring of a ring
(with local units), the multiplier algebra and the von Neumann envelope of a
C*-algebra are all \emph{created }by certain canonical forgetful functors.
For example, in the case of the MacNeille completion of a poset, one simply
considers the forgetful functor from the category of complete lattices and 
\emph{continuous }monotone maps to the category of posets and monotone maps.
It is not clear whether there exist examples of \textquotedblleft
hulls\textquotedblright\ which have been considered in mathematics that can 
\emph{not }be seen as particular cases of the general construction described
in this paper. For instance, it would be interesting to consider the
injective hull of modules over a fixed ring.

One could compare this work to Fra\"{\i}ss\'{e} theory in model theory and
functional analysis. Indeed, since the seminal work of Fra\"{\i}ss\'{e} in
the context of model theory \cite{fraisse_sur_1954}---see also \cite%
{hodges_model_1993,hodges_building_1985} and \cite%
{ferenczi_amalgamation_2020,masumoto_jiang-su_2017,ben_yaacov_linear_2014,cantier_fraisse_2024,ghasemi_strongly_2021,bryant_fraisse_2021,tursi_separable_2023,lupini_operator_2015,masumoto_jiang-su_2017,bartosova_ramsey_2017,ben_yaacov_fraisse_2015,eagle_fraisse_2016,lupini_fraisse_2018}
for its extension to metric structures---until its later category-theoretic
reformulations and extensions \cite%
{masulovic_dual_2017,masulovic_categorical_2017,masulovic_pre-adjunctions_2018,rosicky_uniqueness_2019,chen_amalgamable_2019,kubis_fraisse_2014,kubis_lelek_2017,kakol_non-archimedean_2017,kubis_injective_2015,ghasemi_universal_2020,kubis_game-theoretic_2018}%
, one of the main upshots of Fra\"{\i}ss\'{e} theory has been to provide a
unified framework to recognize and study constructions that produce
\textquotedblleft generic\textquotedblright\ objects in a given class. In a
similar spirit, this work offers a unified framework where a number of
constructions of \textquotedblleft completions\textquotedblright ,
\textquotedblleft hulls\textquotedblright , and \textquotedblleft
envelopes\textquotedblright\ can be recognized as particular instances of a
general phenomenon, as they can all be seen, in our terminology, as \emph{%
hulls }that are \emph{created }by a certain \emph{forgetful functors}. The
analogy is close: as the foundational theorem of any Fra\"{\i}ss\'{e}%
--theoretic framework is an Existence and Uniqueness Theorem for the Fra%
\"{\i}ss\'{e} limit (generic object), in the case of completions the
fundamental result is the Existence and Uniqueness\ Theorem for the
completion. Precisely as the Fra\"{\i}ss\'{e} limit is characterized by its
universality (an existence assertion) and ultrahomogeneity (an existence and
uniqueness assertion), the completion is also uniquely characterized by its
universality (again, a uniqueness assertion) and maximality (an existence
and uniqueness assertion).

Our ultimate motivation for considering this notion is to generalize to the
polarized context sites, sheafs, and topoi. This will be the subject of
further work, where the voltages will be shown to provide the right target
for the polarized\ version of sheaves. Other categorical approaches to
completions and hulls can be found in \cite%
{bezhanishvili_dedekind_2013,kubis_injective_2015,bezhanishvili_functorial_2016,hager_minimum_2023,cecco_categorical_2024,hager_holder_2014,banaschewski_categorical_1967}%
.

The rest of this paper is divided into 7 sections. Section \ref%
{Section:voltages} presents the fundamental notions of the polarized\
refinement of category theory: monopole, polarity, voltage. Section \ref%
{Section:nature} presents several examples of naturally occurring
electricategories and polarities. Section \ref{Section:capacitors}
introduces \emph{capacitors}, isolating the key ingredients necessary to
produce a partially defined\ left adjoint to a given forgetful functor,
yielding the desired completion functor and a corresponding voltage.

The last sections are devoted to explaining how fundamental completion
constructions in mathematics can be seen as particular instances of this
general framework: the Boolean algebra completions of a Boolean algebra
(Section \ref{Section:rings}), the MacNeille completion of an ordered sets
(Section \ref{Section:posets}), the multiplier ring of a ring with local
units (Section \ref{Section:rings}), the multiplier algebra of a C*-algebra
(Section \ref{Section:algebras}), and the von Neumann envelope of a
C*-algebra (Section \ref{Section:vN-envelope}). Other examples and
applications will be the content of further work.

\subsection{Acknowledgments}

We are grateful to Luigi Caputi, Eusebio Gardella, Matthew Kennedy, Fosco
Loregian, Giuseppe Metere, Hannes Thiel, and Marvin for many useful
conversations and remarks.

\section{Polarities and voltages\label{Section:voltages}}

In this section we lay out the fundamental notions of a generalization of
category theory that could be called \emph{polarized category theory}. In
this context, arrows can have a (positive or negative) sign, which is
preserved by composition. We explain how the corresponding notion of \emph{%
polarity} can this be seen as a particular instance of \emph{enriched
category} over the semilattice of subsets of $\left\{ +,-\right\} $.

\subsection{Monopoles and polarities}

Let $\mathcal{C}$ be a category. Recall that a \emph{wide }subcategory of $%
\mathcal{C}$ is a subcategory that contains all objects of $\mathcal{C}$.
Let us say that a refinement\emph{\ }of $\mathcal{C}$ is subcategory that
contains all objects of $\mathcal{C}$ as well as all arrows in $\mathcal{C}$
that are isomorphisms.

\begin{definition}
\label{Definition:polarity}We define:

\begin{itemize}
\item a positive\emph{\ monopole} to be a category $\mathcal{C}$ endowed
with a distinguished refinement $\mathcal{C}_{+}$ of \emph{positive }arrows;

\item a negative monopole to be a category endowed with a distinguished
refinement $\mathcal{C}_{-}$ of \emph{negative }arrows;

\item a \emph{polarity }to be a category that is both a positive monopole
and a negative monopole.
\end{itemize}
\end{definition}

In the following we assume a monopole to be positive unless otherwise
specified. It is clear that there is no difference between positive and
negative electricategories, but it is terminologically convenient to
distinguish them.

Every category can be regarded as a polarity, where every arrow is positive
and negative. Conversely, any polarity $\mathcal{C}$ gives rise to a
category $\underline{\mathcal{C}}$ obtained by forgetting the polarization,
i.e., the information about the elementary subcategories $\mathcal{C}_{+}$
and $\mathcal{C}_{-}$ of positive and negative arrows. In this case, we say
that $\mathcal{C}$ is a \emph{polarization }of the category $\underline{%
\mathcal{C}}$. Obviously, this establishes an adjunction between polarities
and categories.

Similar considerations and terminology apply to electricategories. Thus,
every monopole $\mathcal{C}$ has an underlying category $\underline{\mathcal{%
C}}$. We then say that $\mathcal{C}$ is a \emph{positive polarization} of $%
\underline{\mathcal{C}}$.

\begin{definition}
Let $\mathcal{C}$ and $\mathcal{D}$ be monopoles. We say that $\mathcal{C}$
is a \emph{sub-monopole} of $\mathcal{D}$ if $\mathcal{C}$ and $\mathcal{D}$
have the same objects and arrows, and every positive arrow in $\mathcal{C}$
is also positive in $\mathcal{D}$.
\end{definition}

Obviously the collection of sub-monopoles of a fixed monopole $\mathcal{C}$
forms a complete lattice.

\begin{definition}
Let $\mathcal{C}$ and $\mathcal{D}$ be polarities. Then we say that $%
\mathcal{C}$ is a \emph{sub-polarity} of $\mathcal{D}$ if $\mathcal{C}$ and $%
\mathcal{D}$ have the same objects and arrows, and every positive
(respectively, negative) arrow in $\mathcal{C}$ is also a positive
(respectively, negative) arrow in $\mathcal{D}$;
\end{definition}

In a polarity we can define the usual notions from category theory but
restricting to just positive and negative arrows. For instance:

\begin{definition}
\label{Definition:epic}Let $\mathcal{C}$ be a polarity, and $f$ an arrow in $%
\mathcal{C}$. Then $f$ is \emph{negatively epic }if for all \emph{negative }%
arrows $p,q$ in $\mathcal{C}$, $pf=qf$ implies $p=q$.
\end{definition}

If $f$ is negative, then $f$ is negatively epic if and only if $f$ is epic
in the category $\mathcal{C}_{-}$. However, in general one can not
reformulate the definition in this fashion since $f$ is not an arrow in $%
\mathcal{C}_{-}$. Notice also that being negatively epic is more generous
than being epic in $\underline{\mathcal{C}}$.

\subsection{Functors between polarities}

Let $\mathcal{C}$ and $\mathcal{D}$ be two polarities. A positive (polar)
functor $F:\mathcal{C}\rightarrow \mathcal{D}$ is given by functors $F_{+}:%
\mathcal{C}_{+}\rightarrow \mathcal{D}_{+}$ and $F_{-}:\mathcal{C}%
_{-}\rightarrow \mathcal{D}_{-}$ that agree on objects. A negative (polar)
functor $F:\mathcal{C}\rightarrow \mathcal{D}$ is given by functors $F_{+}:%
\mathcal{C}_{+}^{\mathrm{op}}\rightarrow \mathcal{D}_{+}$ and $F_{-}:%
\mathcal{C}_{-}\rightarrow \mathcal{D}_{-}$ that agree on objects. Observe
that such functors can be composed in the obvious way, in such a way that
the sign of the resulting functor obeys the product sign rules. A functor $%
\underline{\mathcal{C}}\rightarrow \underline{\mathcal{D}}$ in particular
yields a positive polar functor $\mathcal{C}\rightarrow \mathcal{D}$ by
restriction.

If $F,G:\mathcal{C}\rightarrow \mathcal{D}$ are positive functors, then a
natural transformation $\eta :F\Rightarrow G$ is given by arrows $\eta
_{x}:Fx\rightarrow Gx$ in $\mathcal{C}$ for $x\in \mathcal{C}$ that define
natural transformations $\eta _{+}:F_{+}\Rightarrow G_{+}$ and $\eta
_{-}:F_{-}\Rightarrow G_{-}$. Explicitly, this means that:

\begin{enumerate}
\item for any arrow $f:A\rightarrow B$ in $\mathcal{C}_{+}$ the diagram%
\begin{equation*}
\begin{array}{ccc}
Fx & \overset{F_{+}f}{\rightarrow } & Fx \\ 
\eta _{A}\downarrow &  & \downarrow \eta _{B} \\ 
Gx & \underset{G_{+}f}{\rightarrow } & Gx%
\end{array}%
\end{equation*}%
commutes, and

\item for any arrow $f:A\rightarrow B$ in $\mathcal{C}_{-}$ the diagram%
\begin{equation*}
\begin{array}{ccc}
Fx & \overset{F_{-}f}{\rightarrow } & Fx \\ 
\eta _{A}\downarrow &  & \downarrow \eta _{B} \\ 
Gx & \underset{G_{-}f}{\rightarrow } & Gx%
\end{array}%
\end{equation*}%
commutes.
\end{enumerate}

If $F:\mathcal{C}\rightarrow \mathcal{D}$ is a \emph{positive} functor and $%
G:\mathcal{C}\rightarrow \mathcal{D}$ is a \emph{negative }functor, then a
natural transformation $\eta :F\Rightarrow G$ is given by arrows $\eta
_{x}:Fx\rightarrow Gx$ in $\mathcal{D}$ for objects $x$ of $\mathcal{C}$
such that:

\begin{enumerate}
\item for any arrow $f:A\rightarrow B$ in $\mathcal{C}_{+}$ the diagram%
\begin{equation*}
\begin{array}{ccc}
Fx & \overset{F_{+}f}{\rightarrow } & Fx \\ 
\eta _{A}\downarrow &  & \downarrow \eta _{B} \\ 
Gx & \underset{G_{+}f}{\leftarrow } & Gx%
\end{array}%
\end{equation*}%
commutes, and

\item for any arrow $f:A\rightarrow B$ in $\mathcal{C}_{-}$ the diagram%
\begin{equation*}
\begin{array}{ccc}
Fx & \overset{F_{-}f}{\rightarrow } & Fx \\ 
\eta _{A}\downarrow &  & \downarrow \eta _{B} \\ 
Gx & \underset{G_{-}f}{\rightarrow } & Gx%
\end{array}%
\end{equation*}%
commutes.
\end{enumerate}

Observe that, again, natural transformations between functors of opposite
sign can be composed in the obvious way, producing a natural transformation
between functors of the same sign.

The notion of functors between monopoles, and their natural transformations
can also be defined analogously.

\subsection{Polarities as enriched categories}

Let $\mathcal{L}$ be the first-order language that comprises two unary
relation symbols $R_{+}$ and $R_{-}$.\ Then a first-order structure in such
a language is just a \emph{polar set}, i.e., a set $A$ endowed with two
subsets $A_{+}$ and $A_{-}$, corresponding to the interpretation of $R_{+}$
and $R_{-}$. A morphism $A\rightarrow B$ between two such structures is a
function mapping $A_{+}$ to $B_{+}$ and $A_{-}$ to $B_{-}$. This defines a
category $\mathbf{Set}_{\pm }$. In this category, the product $A\times B$ of
polar sets $A$ and $B$ is the cartesian product with%
\begin{equation*}
\left( A\times B\right) _{+}:=A_{+}\times B_{+}\text{\quad and\quad }\left(
A\times B\right) _{-}=A_{-}\times B_{-}\text{.}
\end{equation*}%
This monoidal operation and the terminal object $\left\{ \ast \right\} $,
where $\ast $ is both positive and negative, produce a monoidal structure on 
$\mathbf{Set}_{\pm }$ \cite[Section 1.1]{kelly_basic_2005}.

One can then regard every locally small polarity $\mathcal{C}$ as a $\mathbf{%
Set}_{\pm }$-category, i.e., a category enriched over $\mathbf{Set}_{\pm }$ 
\cite[Section 1.2]{kelly_basic_2005}. Indeed, if $x,y$ are objects of $%
\mathcal{C}$, then $\mathrm{Hom}_{\mathcal{C}}\left( x,y\right) $ is a polar
set, where $\mathrm{Hom}_{\mathcal{C}}\left( x,y\right) _{+}$ comprises the
positive arrows and $\mathrm{Hom}_{\mathcal{C}}\left( x,y\right) _{-}$
comprises the negative arrows. Conversely, any $\mathbf{Set}_{\pm }$%
-category can be seen as a polarity. If $\mathcal{C}$ and $\mathcal{D}$ are
polarities, then the positive functors $\mathcal{C}\rightarrow \mathcal{D}$
are precisely the $\mathbf{Set}_{\pm }$-functors \cite[Section 1.2]%
{kelly_basic_2005}.

One can consider the category $\mathbf{Polarity}$ whose objects are the
polarities and whose arrows are the positive or negative functors. As an
arrow in $\mathbf{Polarity}$ is either positive or negative, this yields
what could be called a \emph{signed category}. In a signed category each
arrow has precisely one sign (positive or negative), and the sign of the
composition is the \emph{product }of the signs. Such signed categories can
be thought of as enriched categories over the monoidal category $\pm \mathbf{%
Set}$ of \emph{signed sets}. Such a category has been considered in algebra
and combinatorics in the study of oriented or directed combinatorial and
algebraic structures, such as oriented matroids \cite%
{brini_combinatorics_2005,bollobas_erdos-ko-rado_1997,las_vergnas_convexity_1980}%
.

A signed set is thus a set $A$ endowed with a $\left\{ +,-\right\} $%
-labelled \emph{partition }$\left\{ +A,-A\right\} $. The morphisms again are
the functions preserving the partition. In this category, the \emph{monoidal
operation }is defined by letting $A\otimes B$ to be the cartesian product
with partition defined by%
\begin{equation*}
+\left( A\otimes B\right) :=\left( \left( +A\right) \times \left( +B\right)
\right) \cup \left( \left( -A\right) \times \left( -B\right) \right)
\end{equation*}%
and the monoidal identity is the set $\left\{ +,-\right\} $ with the obvious
partition.

\subsection{Hereditary monopoles}

We now consider a natural property of electricategories.

\begin{definition}
\label{Definition:hereditary}Let $\mathcal{C}$ be a monopole. Then $\mathcal{%
C}$ is left \emph{hereditary} if for all arrows $f,g$ in $\mathcal{C}$:%
\begin{equation*}
gf\text{ positive }\Rightarrow g\text{ positive.}
\end{equation*}
\end{definition}

Let $\mathcal{C}$ be any category. Declaring an arrow to be positive if and
only if it is epic turns $\mathcal{C}$ into a left hereditary monopole.

\begin{definition}
\label{Definition:core}Let $\mathcal{C}$ be a monopole. The \emph{left
hereditary core }$\mathrm{L}\left( \mathcal{C}\right) $ of $\mathcal{C}$ is
the monopole with the same underlying category as $\mathcal{C}$, such that
an $f$ arrow of $\mathcal{C}$ is positive in $\mathrm{L}\left( \mathcal{C}%
\right) $ if and only if:

\begin{enumerate}
\item $f$ is positive in $\mathcal{C}$, and

\item for any other arrow $g$ of $\mathcal{C}$, if $gf$ is defined and
positive in $\mathcal{C}$, then $g$ is positive in $\mathcal{C}$.
\end{enumerate}
\end{definition}

It is easily verified that the left hereditary core is indeed a left
hereditary monopole, and in fact the largest left hereditary sub-monopole of
the given monopole.

The essential core of a monopole has already been considered implicitly in
the literature; see for example \cite%
{porst_characterization_1981,adamek_injective_2002}. Indeed, in the
terminology of \cite{porst_characterization_1981,adamek_injective_2002}, an
arrow $f$ is positive in $\mathrm{L}\left( \mathcal{C}\right) $ if and only
if it is $\mathcal{C}_{+}$-\emph{essential}, where $\mathcal{C}_{+}$ is the
class of positive arrows in $\mathcal{C}$.

\subsection{Voltages}

We define voltages as polarities endowed with additional information.

\begin{definition}
\label{Definition:voltage}A \emph{voltage }is given by:

\begin{itemize}
\item a polarity $\mathcal{C}$;

\item a negative functor $E:\mathcal{C}\rightarrow \underline{\mathcal{C}}$;

\item a natural transformation 
\begin{equation*}
\eta :1_{\underline{\mathcal{C}}}\Rightarrow E
\end{equation*}%
such that 
\begin{equation*}
\eta _{E}:E\Rightarrow EE
\end{equation*}%
is a natural isomorphism.
\end{itemize}
\end{definition}

Notice that in\ Definition \ref{Definition:voltage}, $\underline{\mathcal{C}}
$ is the category obtained from $\mathcal{C}$ by forgetting the
polarization. It is considered as a polarity itself where all the arrows are
positive and negative. Spelling out the definition, a voltage is given by:

\begin{itemize}
\item a category $\underline{\mathcal{C}}$ together with elementary
subcategories $\mathcal{C}_{+}$ and $\mathcal{C}_{-}$;

\item functors 
\begin{equation*}
E_{+}:\mathcal{C}_{+}^{\mathrm{op}}\rightarrow \mathcal{C}\text{\quad
and\quad }E_{-}:\mathcal{C}_{-}\rightarrow \mathcal{C}
\end{equation*}%
agreeing on objects;

\item arrows 
\begin{equation*}
\eta _{A}:A\rightarrow EA
\end{equation*}%
for every object $A$ of $\mathcal{C}$, where 
\begin{equation*}
EA:=E_{+}\left( A\right) =E_{-}\left( A\right) \text{;}
\end{equation*}
\end{itemize}

such that:

\begin{enumerate}
\item $\eta _{EA}:EA\rightarrow EEA$ is an isomorphism for every object $A$
of $\mathcal{C}$;

\item for every arrow $f:A\rightarrow B$ in $\mathcal{C}_{+}$ the diagram%
\begin{equation*}
\begin{array}{ccc}
A & \overset{f}{\rightarrow } & B \\ 
\eta _{A}\downarrow &  & \downarrow \eta _{B} \\ 
EA & \underset{E_{+}f}{\longleftarrow } & EB%
\end{array}%
\end{equation*}%
commutes;

\item for every arrow $f:A\rightarrow B$ in $\mathcal{C}_{-}$ the diagram%
\begin{equation*}
\begin{array}{ccc}
A & \overset{f}{\rightarrow } & B \\ 
\eta _{A}\downarrow &  & \downarrow \eta _{B} \\ 
EA & \underset{E_{-}f}{\rightarrow } & EB%
\end{array}%
\end{equation*}%
commutes.
\end{enumerate}

\begin{remark}
Recall the notion of \emph{dagger category }from \cite[Section 2.3]%
{heunen_categories_2019}. One can regard a dagger category with dagger
functor $\dagger :f\mapsto f^{\dagger }$ as a particular case of \emph{%
voltage}. In the dagger case, all the arrows are both positive and negative.
The contravariant functor $E_{+}$ is the dagger functor, while the functor $%
E_{-}$ is the identity functor. For any object $A$, the arrow $\eta
_{A}:A\rightarrow A$ is the identity.
\end{remark}

\section{Monopoles in nature\label{Section:nature}}

\subsection{Strong morphisms}

Let $\mathcal{C}$ be a category with finite limits and colimits.\ Denote by $%
\mathcal{E}$ the class of epics in $\mathcal{C}$ and by $\mathcal{M}$ the
class of monics in $\mathcal{C}$. Let $a,b$ be arrows in $\mathcal{C}$.
Recall that $a$ is left orthogonal to $b$, and $b$ is right orthogonal to $a$%
, if for any commuting diagram%
\begin{equation*}
\begin{array}{ccc}
\bullet & \overset{a}{\rightarrow } & \bullet \\ 
g\downarrow &  & \downarrow h \\ 
\bullet & \overset{b}{\rightarrow } & \bullet%
\end{array}%
\end{equation*}%
there exists a unique arrow $t$ such that%
\begin{equation*}
at=g\text{\quad and\quad }bt=g\text{.}
\end{equation*}%
Given a collection $\mathcal{H}$ of arrows one let $\mathcal{H}^{\bot }$ be
the collection of arrows that are right orthogonal to all arrows in $%
\mathcal{H}$, and $^{\bot }\mathcal{H}$ be the collection of all arrows that
are left orthogonal to all arrows in $\mathcal{H}$.

A \emph{strong epic }is an arrow in $^{\bot }\mathcal{M}$, i.e., an arrow
that is left orthogonal to all the monics \cite[Definition 1.1]%
{gran_introduction_2021}. Dually, a \emph{strong monic }is an arrow in $%
\mathcal{E}^{\bot }$, i.e., right orthogonal to all epics.

Any strong epic is, in particular, epic. Furthermore, an strong epic that is
also monic must be an isomorphism \cite[Lemma 1.3]{gran_introduction_2021}.
Dually, any strong monic is monic, and it is an isomorphism if and only if
it is also epic.

\subsection{Regular morphisms}

We continue to assume that $\mathcal{C}$ is a category with finite limits
and colimits. We recall the notion of \emph{regular morphism }in $\mathcal{C}
$. Given an arrow $f:a\rightarrow b$, one can consider the pullback diagram 
\begin{equation*}
\begin{array}{ccc}
a\times _{f}a & \rightarrow & a \\ 
\downarrow &  & \downarrow f \\ 
a & \overset{f}{\rightarrow } & b%
\end{array}%
\end{equation*}%
This is called the \emph{kernel pair }$\mathrm{\mathrm{Ker}}\left( f\right) $%
\emph{\ }of $f$. Dually, one can consider the pushout diagram%
\begin{equation*}
\begin{array}{ccc}
a & \overset{f}{\rightarrow } & b \\ 
f\downarrow &  & \downarrow \\ 
b & \rightarrow & b\sqcup _{f}a\text{.}%
\end{array}%
\end{equation*}%
This is called the \emph{cokernel pair }$\mathrm{Coker}\left( f\right) $ of $%
f$.

One defines:

\begin{itemize}
\item the \emph{image }$\mathrm{Im}\left( f\right) $ of $f$ to be the
equalizer of its cokernel pair 
\begin{equation*}
\mathrm{Im}\left( f\right) =\mathrm{Eq}\left( \mathrm{Coker}\left( f\right)
\right) \text{;}
\end{equation*}

\item the \emph{coimage }$\mathrm{Coim}\left( f\right) $ of $f$ to be the
coequalizer of its kernel pair%
\begin{equation*}
\mathrm{CoIm}\left( f\right) =\mathrm{CoEq}\left( \mathrm{\mathrm{Ker}}%
\left( f\right) \right) \text{.}
\end{equation*}
\end{itemize}

By the universal properties of these objects, one obtains a canonical
factorization%
\begin{equation}
\begin{array}{ccc}
a & \overset{f}{\rightarrow } & b \\ 
\downarrow &  & \uparrow \\ 
\mathrm{CoIm}\left( f\right) & \overset{\rho \left( f\right) }{\rightarrow }
& \mathrm{Im}\left( f\right)%
\end{array}%
\text{.\label{Diagram:Factorization}}
\end{equation}%
Let us call the arrow%
\begin{equation*}
\rho \left( f\right) :\mathrm{CoIm}\left( f\right) \rightarrow \mathrm{Im}%
\left( f\right)
\end{equation*}%
the \emph{regular comparison }map; see \cite[Definition 1.1.1]%
{schneiders_quasi-abelian_1999} (in the context of additive categories).
Such an arrow is always both monic and epic \cite[Proposition 1.1.5]%
{schneiders_quasi-abelian_1999}, although it fails in general to be an
isomorphism.

The morphism $f$ is:

\begin{itemize}
\item a \emph{regular epic }if $f=\mathrm{CoIm}\left( f\right) $, and dually

\item a \emph{regular monic }if $f=\mathrm{Im}\left( f\right) $;
\end{itemize}

see \cite[Definition 1.5]{gran_introduction_2021}.

Explicitly, a morphism $f:x\rightarrow y$ is a regular monic if and only if
whenever $g$ is a morphism with target $y$ that equalizes all the pairs of
arrows equalized by $f$ (i.e., $pf=qf\Rightarrow pg=qg$) then $g$ factors
through $f$. Dually, a morphism $f:x\rightarrow y$ is a regular epic every $%
h $ is a morphism with source $x$ that co-equalizes all the pairs of arrows
co-equalized by $f$ (in formulae: $fi=fj\Rightarrow hi=hj$) must factor
through $f$. Notice that these reformulations are meaningful even when $%
\mathcal{C}$ does not have finite limits and colimits.

\begin{remark}
Regular monics and epics are considered in \cite%
{schneiders_quasi-abelian_1999} (in the context of additive categories) and
in \cite{banaschewski_categorical_1967} under the name of \emph{strict}
monics and epics. Both these sources attribute the notion to Grothendieck.
\end{remark}

A morphism is a regular epic if and only if it is the coequalizer of a pair
of arrows; see \cite[Exercise 1.9]{gran_introduction_2021}. Every split epic
is a regular epic, and every regular epic is a strong epic; see \cite[%
Proposition 1.8]{gran_introduction_2021}.

More generally, one can define what it means for an arbitrary morphism to be
regular:

\begin{definition}
\label{Definition:regular}The arrow $f:a\rightarrow b$ is called \emph{%
regular} if the canonical comparison map $\rho \left( f\right) $ is an
isomorphism.
\end{definition}

Notice that the map $\mathrm{Im}\left( f\right) \rightarrow b$ in the
commuting diagram \eqref{Diagram:Factorization} is always a regular monic
and, particularly, a strong monic. Thus, if $f$ is epic then also $\mathrm{Im%
}\left( f\right) \rightarrow b$ is epic, whence an isomorphism. These
observations shows that Definition \ref{Definition:regular} indeed agrees
with the notion of regular epic previously defined. Notice that in general
the composition of regular morphisms need not be regular.

The notion of regular category is defined in terms of the notion of regular
epimorphism; see \cite[Definition 1.10]{gran_introduction_2021}.

\begin{definition}
Let $\mathcal{C}$ be a category with finite limits and colimits. Then $%
\mathcal{C}$ is \emph{regular }if the pullback along any morphism of a
regular epimorphism is a regular epimorphism.
\end{definition}

In a regular category, the factorization of an arrow $f$ as%
\begin{equation*}
f=mq
\end{equation*}%
where $q$ is a regular epimorphism and $m$ is a monomorphism is essentially
unique \cite[Theorem 1.11]{gran_introduction_2021}. Furthermore, such a
factorization is \emph{pullback-stable}, and these properties characterize
regular categories among finitely complete categories \cite[Theorem 1.14]%
{gran_introduction_2021}.

\subsection{Essential monics}

\begin{proposition}
\label{Proposition:regular-monic}Let $\mathcal{C}$ be a category with finite
limits and colimits.

\begin{enumerate}
\item Declaring an arrow in $\mathcal{C}$ to be positive if it is a \emph{%
monic} defines a $\emph{right}$ \emph{hereditary} sub-monopole $\mu \mathcal{%
C}$ of $\mathcal{C}$.

\item Declaring an arrow in $\mathcal{C}$ to be positive if it is a \emph{%
regular monic} defines \emph{right hereditary }sub-monopole of $\mathcal{C}$.
\end{enumerate}
\end{proposition}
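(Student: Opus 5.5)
The plan is to check the two requirements of a sub-monopole directly in each case, and then right heredity. A sub-monopole needs the positive arrows to form a refinement: they contain every isomorphism and are closed under composition. Right heredity, dual to Definition \ref{Definition:hereditary}, means that whenever $gf$ is positive, $f$ is positive.

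\emph{Monics.} This part is routine.
\begin{itemize}
\item Isomorphisms are monic.
\item Composites of monics are monic: from $gfx=gfy$ we get $fx=fy$ because $g$ is monic, and then $x=y$ because $f$ is monic.
\item For heredity, if $gf$ is monic and $fx=fy$, then $gfx=gfy$, so $x=y$. Hence $f$ is monic.
\end{itemize}

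\emph{Regular monics: containment of isomorphisms.} I would work throughout with the explicit characterization recalled before Definition \ref{Definition:regular}: $f\colon x\to y$ is a regular monic iff every $h$ with target $y$ that equalizes all pairs equalized by $f$ factors through $f$. An isomorphism satisfies this trivially. Every regular monic is also monic, which I will use below.

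\emph{Regular monics: heredity.} Suppose $gf$ is a regular monic, with $f\colon x\to y$ and $g\colon y\to z$. Then $f$ is monic by part (1). Take $h$ with target $y$ that equalizes every pair equalized by $f$. If $p\,gf=q\,gf$, then $f$ equalizes the pair $(pg,qg)$, so $pgh=qgh$. Thus $gh$ equalizes every pair equalized by $gf$, and so $gh=gft$ for some $t$.

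It remains to get $h=ft$, and here $g$ need not be monic. I would get around this with the cokernel pair $u,v\colon y\to y\sqcup_f y$ of $f$. Since $uf=vf$, we have $uh=vh$, and also $u(ft)=v(ft)$. So $h$ and $ft$ both lie in the equalizer of $(u,v)$, and they agree after composing with $g$.

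To compare them, form the pushout of $(u,v)$ along the pair $(g,g)$. Concretely, compare the cokernel pair of $f$ with the cokernel pair of $gf$ via the induced map $y\sqcup_f y\to z\sqcup_{gf}z$. The image of $f$ is the equalizer of $(u,v)$, and the image of $gf$ is the equalizer of the cokernel pair of $gf$. I would try to show that this induced map makes $\mathrm{Im}(f)\to y$ a pullback of $\mathrm{Im}(gf)=gf$ along $g$. Given that, the canonical map $x\to \mathrm{Im}(f)$ is an isomorphism, which is exactly the statement that $f$ is regular. Checking this pullback property is where the work lies.

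\emph{Regular monics: closure under composition.} Given regular monics $f$ and $g$, and $h$ equalizing all pairs equalized by $gf$, I first factor $h=gk$ using that $g$ is regular. Then I want $k$ to equalize the pairs equalized by $f$; for this I would push those pairs out along $g$ in the same way.

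I expect the main obstacle to be exactly this transfer of equalized pairs through $g$, both in the heredity step and in the closure step. To handle it, I would try first to reduce both statements to the single claim that cokernel pairs of $f$ and of $gf$ are related by pushout along $g$. That claim is precisely where the existence of finite colimits is used.
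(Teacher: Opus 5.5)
Your part (1), and the observation that isomorphisms are regular monics, are fine. For the two substantive claims in (2), however, you give no argument. Both are deferred to the assertion that $\mathrm{Im}(f)\to y$ is a pullback of $gf$ along $g$, and that assertion is false already in $\mathbf{Set}$. Take $x=\{a\}$, $y=\{a,b\}$, $z=\{c\}$, with $f$ the inclusion and $g$ constant. Then $gf$ is an isomorphism, so its pullback along $g$ is $1_y$, while $\mathrm{Im}(f)=f$ is a proper subobject. In general, if $gf$ is the equalizer of $(p,q)$, its pullback along $g$ is the equalizer of $(pg,qg)$. That equalizer is strictly bigger than $f$ as soon as $g$ fails to be monic, which is exactly the obstruction you noticed. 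Pushing equalized pairs out along $g$ in the composition step hits the same wall, because pairs of arrows out of $y$ need not extend along $g$.

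In fact the gap cannot be closed: with only finite limits and colimits, statement (2) is false. Work in $\mathbf{Cat}$. There a subcategory inclusion is a regular monic exactly when it already contains every arrow on which all pairs of functors agreeing on the subcategory must agree; these arrows form the equalizer of the cokernel pair.
\begin{itemize}
\item Let $\mathbf{C}$ be the chain $0\xrightarrow{x}1\xrightarrow{a}2\xrightarrow{y}3$. Let $A\subseteq\mathbf{C}$ have all four objects and the arrows $ax$, $a$, $ya$, and let $B=A\cup\{yax\}$. Neither contains composable pairs of non-identity arrows.
\item $A\hookrightarrow B$ is regular, because $B$ is free on a graph.
\item $B\hookrightarrow\mathbf{C}$ is regular. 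Functors $\mathbf{C}\to\mathbf{Set}$ agreeing on $B$ can still differ on $x$: take $F1$ with two points and the other $Fi$ singletons. They can also differ on $y$: take $F0,F1$ singletons and $F2,F3$ with two points.
\item $A\hookrightarrow\mathbf{C}$ is not regular. If $F,G$ agree on $A$, then $F(yax)=F(y)G(a)G(x)=F(ya)G(x)=G(yax)$, although $yax\notin A$.
\end{itemize}
So regular monics are not closed under composition. Right heredity fails too. Adjoin a free arrow $e\colon 0\to 3$ to both $A$ and $\mathbf{C}$, and let $g$ collapse $e$ onto $yax$. The resulting inclusion $f$ is not regular, by the same computation. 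But $gf$ is regular, being an isomorphism onto $B$ followed by $B\hookrightarrow\mathbf{C}$.

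The paper's own proof is only a citation of Schneiders' Propositions 1.1.7 and 1.1.8, which are stated for additive categories, so it does not cover this generality either. To make (2) true you need an extra hypothesis. One option is that regular monics coincide with strong monics, as in $\mathbf{Set}$, any topos, $\mathbf{Top}$, posets, Boolean algebras, or abelian categories. Then closure under composition and right heredity follow at once from the standard facts that strong monics compose and that $gf$ strong implies $f$ strong.
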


\begin{proof}
(1) is obvious. (2) That composition of regular monics is a regular monic is
the content of \cite[Proposition 1.1.7]{schneiders_quasi-abelian_1999} (in
the context of additive categories). That they form a right hereditary class
is the content of \cite[Proposition 1.1.8]{schneiders_quasi-abelian_1999}.
(Recall that \cite{schneiders_quasi-abelian_1999} uses the term strict
rather than regular.)
\end{proof}

Let $\mu \mathcal{C}$ be the monopole as in Proposition \ref%
{Proposition:regular-monic}. One can consider its \emph{left hereditary core 
}$\mathrm{L}\left( \mu \mathcal{C}\right) $ as in Definition \ref%
{Definition:core}. The arrows in $\mathrm{L}\left( \mu \mathcal{C}\right) $
are called \emph{essential monics}. Notice that, by definition, an arrow $f$
is an essential monic if:

\begin{enumerate}
\item it is a monic, and

\item whenever $g$ is an arrow such that $gf$ is a monic, $f$ is a monic.
\end{enumerate}

The notion of \emph{essential regular monic }is defined analogously, where
monics are replaced with \emph{regular }monics.

\subsection{Normal monics}

Let $\mathcal{C}$ be a category with finite limits. Then a (necessarily
monic \cite[Lemma 3.2.2]{borceux_malcev_2004}) morphism $f:X\rightarrow Y$
is \emph{normal }to some equivalence relation $r:R\rightarrow Y\times Y$ on $%
Y$ \cite[Definition 3.2.1]{borceux_malcev_2004} when:

\begin{enumerate}
\item $f\times f$ factors through $m$ yielding a pullback%
\begin{equation*}
\begin{array}{ccc}
X\times X & \overset{\xi }{\rightarrow } & R \\ 
\downarrow &  & \downarrow r \\ 
X\times X & \underset{f\times f}{\rightarrow } & Y\times Y%
\end{array}%
\end{equation*}

\item the diagram%
\begin{equation*}
\begin{array}{ccc}
X\times X & \overset{\xi }{\rightarrow } & R \\ 
p_{0}\downarrow &  & \downarrow d_{0} \\ 
X & \underset{f}{\rightarrow } & Y%
\end{array}%
\end{equation*}%
is also a pullback.
\end{enumerate}

A monic arrow $f$ is normal if it is normal to some equivalence relation $R$%
, which is uniquely determined by $f$ when $\mathcal{C}$ is protomodular
with finite limits \cite[Theorem 3.2.8]{borceux_malcev_2004}. The
composition of two normal monics is a normal monic, and the class of normal
monics is closed by taking pullbacks along arbitrary arrows \cite[%
Proposition 3.2.6(2)]{borceux_malcev_2004}. Suppose now that $\mathcal{E}$
has pushouts. If $s$ is a split monic with splitting $p$, then the $s$ is
normal to the \emph{kernel pair} of $p$. Thus, every split monic is normal.

One can turn $\mathcal{C}$ into a (left hereditary) monopole $\nu \mathcal{C}
$ with \emph{normal monics} as positive morphisms. When $\mathcal{C}$ is 
\emph{semi-abelian category} \cite[Section 3.1]{janelidze_semi-abelian_2002}
the classes of\emph{\ normal monics} and \emph{kernels} coincide \cite[%
Proposition 3.2.20]{borceux_malcev_2004}.

\subsection{Pseudocomplements}

Assume that $\mathcal{A}$ is a \emph{semi-abelian category}. Given an object 
$A$ of $\mathcal{A}$ one defines \emph{ideals }of $A$ to be the subobjects
corresponding to \emph{normal monics }with target $A$. The correspondence
between normal monics and equivalence relations shows that the ordered set $%
\mathrm{Ideal}\left( A\right) $ of ideals of $A$ is in fact a bounded\emph{\
lattice }isomorphic to the lattice of equivalence relations on $A$. (Recall
that a semi-abelian category is, in particular, a regular Mal'cev category.
In any regular Mal'cev category \cite[Proposition 5.1.2]{borceux_malcev_2004}%
, equivalence relations on an object form a lattice \cite[Definition 2.9.1]%
{borceux_malcev_2004}.)

Given ideals $I,J$ of an object $A$, one sets $I\bot J$ if $I\wedge J=0$ in $%
\mathrm{Ideal}\left( A\right) $. Given $I$, the largest ideal $J$ satisfying 
$I\bot J$ is called (if it exists) the \emph{pseudocomplement} of $I$. A
semi-abelian category $\mathcal{A}$ \emph{has pseudocomplements of ideals }%
if for every object $A$ of $\mathcal{A}$, every ideal $I$ of $A$ has the
pseudocomplement, i.e., $\mathrm{Ideal}\left( A\right) $ is a
pseudocomplemented lattice. For example, the semi-abelian categories of 
\emph{C*-algebras} or $\ell $-\emph{groups} have pseudocomplements of
ideals, while the semi-abelian categories of \emph{groups} and \emph{abelian
groups }do not have pseudocomplements of ideals.

\begin{lemma}
\label{Lemma:quotient-kernel}Suppose that $i:X\rightarrow Y$ is a normal
monic in $\mathcal{A}$ that has pseudocomplement $i^{\bot }$ in the lattie
of ideals of $Y$. Define $p:=\mathrm{Coker}\left( i^{\bot }\right)
:Y\rightarrow Z$ and $\xi :=pi$.\ Then $\xi $ is a normal essential monic.
\end{lemma}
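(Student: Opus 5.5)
The plan is to work entirely in terms of kernels and the lattice $\mathrm{Ideal}(\cdot)$. In a semi-abelian category an arrow is monic if and only if its kernel is $0$. Normal monics are exactly kernels. Moreover, for a regular epic $p:Y\rightarrow Z$, direct and inverse images give an order-preserving correspondence between ideals of $Y$ containing $\mathrm{Ker}(p)$ and ideals of $Z$. Throughout, write $I$ for the ideal of $Y$ represented by $i$, and $I^{\bot}$ for its pseudocomplement. By construction $p=\mathrm{Coker}(i^{\bot})$ is a regular epic with $\mathrm{Ker}(p)=I^{\bot}$. (I read condition (2) of essentiality as: whenever $g\xi$ is monic, $g$ is monic.)

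\emph{Step 1: $\xi$ is monic.} The kernel of $\xi=pi$ is the pullback along $i$ of $\mathrm{Ker}(p)=I^{\bot}$. Viewed as a subobject of $Y$ via $i$, this is $I\wedge I^{\bot}$, which is $0$ by definition of the pseudocomplement. Hence $\mathrm{Ker}(\xi)=0$, and $\xi$ is monic by protomodularity.

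\emph{Step 2: $\xi$ is normal.} The image $p(I)$ of the ideal $I$ under the regular epic $p$ is again an ideal of $Z$; in a semi-abelian category, regular images of normal subobjects are normal. Since $\xi$ is monic, the (regular epi, mono) factorization of $\xi=pi$ has trivial regular-epi part. So $\xi$ is, up to isomorphism, the inclusion of $p(I)$ into $Z$, and is therefore a normal monic.

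\emph{Step 3: essentiality.} Let $g:Z\rightarrow W$ be such that $g\xi$ is monic, and set $K:=\mathrm{Ker}(g)\in\mathrm{Ideal}(Z)$. Let $J:=p^{-1}(K)$, an ideal of $Y$ containing $I^{\bot}$. The pullback of $K$ along $\xi$ is $\mathrm{Ker}(g\xi)=0$. On the other hand, computing this pullback through $i$ shows it is $J\wedge I$, regarded as a subobject of $X$. Hence $J\wedge I=0$, i.e.\ $I\bot J$, and maximality of the pseudocomplement gives $J\leq I^{\bot}$, so $J=I^{\bot}$. Since $p$ is a regular epic, $K=p(p^{-1}(K))=p(I^{\bot})=p(\mathrm{Ker}(p))=0$. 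Thus $g$ has trivial kernel and is monic, as required.

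The main obstacle is justifying the lattice-theoretic facts used above from the semi-abelian axioms. These are: normality of regular images of ideals, the identity $p(p^{-1}(K))=K$ for regular epics, and the identification of the pullback along $i$ with the meet in $\mathrm{Ideal}(Y)$ (pullbacks compose, and intersections of normal subobjects are normal). For these I would invoke the standard results in \cite{borceux_malcev_2004} and \cite{janelidze_semi-abelian_2002}, rather than reprove them.
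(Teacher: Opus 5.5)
Your argument is correct and essentially the paper's. The trivial kernel of $\xi$ comes from $I\wedge I^{\bot}=0$, and essentiality comes from pulling a kernel of $Z$ back along $p$ and then along $i$, using maximality of $I^{\bot}$ and the fact that $p$ and its pullbacks are regular epics. The differences are minor or in your favour: taking that kernel to be $\mathrm{Ker}(g)$ gives the ``$g\xi$ monic $\Rightarrow$ $g$ monic'' form directly, whereas the paper proves the orthogonality criterion (3) of Lemma \ref{Lemma:trivial-complement}; and your Step 2 actually justifies normality via regular images of ideals, whereas the paper's one-line remark treats $p$ as though it were a normal monic.
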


\begin{proof}
Since $i$ and $p$ are normal monics, so is $\xi $. Since $i^{\bot }\bot i$,
by the universal property of the cokernel we have that $\mathrm{\mathrm{Ker}}%
\left( \xi \right) =0$ and $\xi $ is monic. Suppose now that $m:W\rightarrow
Z$ is a kernel with $m\bot \xi $. Consider the lift $m^{\prime }$ of $m$
alongside $p$ via pullback, and then the lift $m^{\prime \prime }$ of $%
m^{\prime }$ alongside $i$ via pullback:%
\begin{equation*}
\begin{array}{lllll}
W^{\prime \prime } & \longrightarrow & W^{\prime } & \overset{p^{\prime }}{%
\rightarrow } & W \\ 
m^{\prime \prime }\downarrow &  & m^{\prime }\downarrow &  & m\downarrow \\ 
X & \overset{i}{\longrightarrow } & Y & \overset{p}{\rightarrow } & X%
\end{array}%
\end{equation*}%
Then $m^{\prime \prime }$ is the lift of $w$ alongside $\xi $ by \cite[Lemma
5.8]{awodey_category_2006}. Since $m\bot \xi $ we must have $m^{\prime
\prime }=0$. Thus, $m^{\prime }\bot i$. This shows that $m^{\prime }\leq
i^{\bot }$, i.e., $m^{\prime }$ factors through $i^{\bot }=\mathrm{\mathrm{%
Ker}}\left( p\right) $ and in particular $m^{\prime }p=0$. Thus, $mp^{\prime
}=0$. Since $p$ is a regular epic, and the class of regular epics in a
regular category are pullback-stable, the arrow $p^{\prime }$ is also a
regular epic. Therefore, $m=0$.\ This concludes the proof.
\end{proof}

\begin{lemma}
\label{Lemma:trivial-complement}Suppose that $m:I\rightarrow A$ is a
kernels.\ The following assertions are equivalent:

\begin{enumerate}
\item $m$ is an essential monic;

\item whenever $f:A\rightarrow B$ is such that $fm$ is a normal monic, $f$
is monic;

\item if $g:J\rightarrow A$ is a normal monic with $g\bot m$, then $g=0$.
\end{enumerate}
\end{lemma}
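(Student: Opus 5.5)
We prove the cycle (1)$\Rightarrow$(2)$\Rightarrow$(3)$\Rightarrow$(1) inside the semi-abelian category $\mathcal{A}$. Here ``essential monic'' is read in the sense fitted to the monopole $\nu\mathcal{A}$ of normal monics: $m$ is essential if whenever $fm$ is a (normal) monic, $f$ is monic. Under this reading, (1)$\Rightarrow$(2) is immediate from the definition of the left hereditary core. If the intended class of positive arrows is all monics, we need one extra remark. In a semi-abelian category a normal monic is in particular monic. Moreover, when $fm$ is monic and $m$ is a kernel, we will show as part of (3)$\Rightarrow$(1) that $f$ is monic. So the two readings coincide, and we prove the strongest combination.

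For (2)$\Rightarrow$(3), let $g:J\rightarrow A$ be a normal monic with $g\bot m$. Set $f:=\mathrm{Coker}(g):A\rightarrow B$; it is a regular epic with $\mathrm{Ker}(f)=g$.
\begin{itemize}
\item We first show $fm$ is monic. Its kernel is the pullback of $g$ along $m$, which is the meet $g\wedge m$ viewed as a subobject of $I$. This meet is $0$, and in a semi-abelian (protomodular, pointed) category an arrow with zero kernel is monic.
\item We next show $fm$ is normal. The image of an ideal under a regular epic is an ideal; this is the standard fact that normal subobjects are stable under direct images along regular epis in a semi-abelian category, coming from the correspondence with equivalence relations, which are stable under regular images in Mal'cev categories. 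Hence the monic $fm$, which is its own image, is a normal monic.
\end{itemize}
By (2), $f$ is monic. Then $g=\mathrm{Ker}(f)=0$.

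For (3)$\Rightarrow$(1), suppose $fm$ is a (normal) monic and let $k:=\mathrm{Ker}(f)$, a normal monic into $A$. The pullback of $k$ along $m$ is $\mathrm{Ker}(fm)$, which is $0$ because $fm$ is monic. Thus $k\wedge m=0$, that is, $k\bot m$. By (3), $k=0$, and hence $f$ is monic. Since $m$ is a kernel, it is monic, so $m$ lies in $\mu\mathcal{A}$ (or $\nu\mathcal{A}$) and satisfies the hereditary condition of Definition~\ref{Definition:core}. This gives (1).

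The main obstacle is the normality of $fm$ in (2)$\Rightarrow$(3). This requires the stability of ideals under regular-epi images, which the paper has not stated explicitly. We would cite Borceux--Bourn for it, or argue via the normal closure: $fm$ is a monic whose regular image of the equivalence relation associated to $m$ is again an equivalence relation, and it is normal to that relation. The identification ``meet of ideals equals pullback equals kernel of the composite'' is routine.
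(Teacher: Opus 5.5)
Your proof is correct and follows the same route as the paper. For (2)$\Rightarrow$(3) you take the cokernel of $g$, show its composite with $m$ has zero kernel because $g\bot m$, argue that this composite is a normal monic, and apply (2); for the last implication you pull back $\mathrm{Ker}(f)$ along $m$ and invoke (3), which is the step the paper labels (2)$\Rightarrow$(1). Your justification that $fm$ is normal, via stability of normal subobjects under direct images along regular epics, is more precise than the paper's remark that both factors are ``normal morphisms''. Your observation that the final step only uses monicity of $fm$ also correctly settles which reading of ``essential monic'' is intended.
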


\begin{proof}
(1)$\Rightarrow $(2) is obvious from the definition.

(2)$\Rightarrow $(3)\ Let $g$ be as in (3). Consider the cokernel $%
p:A\rightarrow B$ of $g$.\ Define $f=\mathrm{\mathrm{Ker}}\left( pm\right) $%
. Since $g$ is a normal monic, it must be the kernel of $p$ \cite[%
Proposition 3.2.20]{borceux_malcev_2004}. Since $p\left( mf\right) =0$, we
must have that $mf$ factors through $g$. Since $m\bot g$, this forces $f=0$.
Thus, $pm$ has trivial kernel, and hence it is monic. Since $p$ and $m$ are
normal morphisms, the same holds for $pm$. This shows that $pm$ is a normal
monic. By hypothesis, $p$ must be monic. This implies that $g=\mathrm{%
\mathrm{Ker}}\left( p\right) =0$.

(2)$\Rightarrow $(1) Suppose that $f:A\rightarrow B$ is an arrow such that $%
fm$ is a normal monic. We need to show that $f$ is monic. Let $%
g:J\rightarrow A$ be the kernel of $f$. We need to show that $g=0$. It
suffices to prove that $g\bot m$. Consider the pullback of $m$ and $g$:%
\begin{equation*}
\begin{array}{lllll}
S & \overset{a}{\longrightarrow } & J &  &  \\ 
b\downarrow &  & \downarrow g &  &  \\ 
I & \overset{m}{\longrightarrow } & A & \overset{f}{\rightarrow } & B%
\end{array}%
\end{equation*}%
Then we have that 
\begin{equation*}
0=fga=fmb
\end{equation*}%
Since $fm$ is monic, this implies that $b=0$. Since $g$ is monic and
pullbacks preserve monics \cite[Chapter 5]{awodey_category_2006}, $b$ is
monic, and $S=0$. This shows that $f$ is monic.
\end{proof}

\subsection{Ideals in varieties}

Let $\mathcal{A}$ be a semi-abelian variety of algebras. In this case we can
reformulate the previous definitions as follows:

\begin{itemize}
\item a \emph{monic }in $\mathcal{A}$ is an injective homomorphism;

\item a \emph{normal monic} in $\mathcal{A}$ is an injective homomorphism
whose image is an ideal;

\item a \emph{normal essential monic} in $\mathcal{A}$ is an injective
homomorphism whose image is an \emph{essential }ideal, i.e., an ideal with
nontrivial intersection with any nontrivial ideal;

\item if $I$ and $J$ are ideals of $A$, then 
\begin{equation*}
I\bot J\Leftrightarrow I\cap J=0\text{;}
\end{equation*}

\item if $J$ is an ideal of $A$, then $J$ is essential if and only if for
any ideal $I$ of $A$, if $I\bot J$ then $I=0$;

\item if $I$ is an ideal of $A$, then its pseudocomplement $I^{\bot }$, if
it exists, is an ideal that is largest among the ideals orthogonal to $I$;

\item the variety $\mathcal{A}$ has pseudocomplements of ideals if for any
algebra $A$ in $\mathcal{A}$ and ideal $I$ in $A$, $I$ has a
pseudocomplement.
\end{itemize}

\section{Voltages from capacitors\label{Section:capacitors}}

In this section we introduce a general notion of \emph{hull }of an object of
a monopole. We also define \emph{capacitor} on a monopole. We then explain
how a capacitor gives rise to a canonical \emph{voltage }on the given
monopole.

\subsection{Amphilimits}

We consider a natural notion that combines the existence property in a
colimit with the uniqueness property in a limit. As in the case of usual
limits and colimits, we can reduce to the case of the empty diagram.

\begin{definition}
\label{Definition:amphi-terminal}Let $\mathcal{D}$ be a category. An object $%
z$ of $\mathcal{D}$ is \emph{amphi-terminal} if:

\begin{enumerate}
\item for any object $x$ of $\mathcal{D}$ there exists at least one arrow $%
x\rightarrow z$ in $\mathcal{D}$;

\item for any object $y$ of $\mathcal{D}$ there exists at most one arrow $%
z\rightarrow y$ in $\mathcal{D}$.
\end{enumerate}

Dually, an object is \emph{amphi-initial} if it is amphi-terminal in $%
\mathcal{D}^{\mathrm{op}}$.
\end{definition}

In Definition \ref{Definition:amphi-terminal}, (1) is equivalent to the
request that $z$ be \emph{weakly terminal }\cite[Definition 3.2]%
{kainen_weak_1971}, while (2) is equivalent to the request that $z$ be \emph{%
quasi-initial }\cite[Section 1.4]{hager_holder_2014}. It is obvious that if $%
\mathcal{D}$ is a full subcategory of $\mathcal{D}^{\prime }$ and $z$ is an
object of $\mathcal{D}$ that is amphi-initial in $\mathcal{D}^{\prime }$,
then it is also amphi-initial in $\mathcal{D}$.

It is obvious that an amphi-terminal object, if it exists, is essentially
unique, and has no nontrivial endomorphisms. Conversely, every weakly
terminal object with no nontrivial endomorphisms is amphi-terminal. Notice
that a zero object, i.e., an object that is both initial and terminal, is
also an object that is both amphi-initial and amphi-terminal.

\begin{definition}
\label{Definition:amphi-colimit}Let $F:J\rightarrow \mathcal{D}$ be a
functor. Then an amphi-limit of $F$ is an amphi-terminal object in the
category of cones over $F$.
\end{definition}

\subsection{Completions}

Let $\mathcal{C}$ be a monopole. Fix an object $x$ of $\mathcal{C}$. We
define the \emph{slice monopole }$\mathcal{C}\downarrow x$ to be the
monopole that has as objects the \emph{positive }arrows with target $x$. A
arrow $f\rightarrow g$ in $\mathcal{C}\downarrow x$ is an arrow $\xi $ in $%
\mathcal{C}$ such that $f=g\xi $. Such an arrow is positive if and only if
it is positive as an arrow of $\mathcal{C}$. We also define the co-slice
monopole $x\downarrow \mathcal{C}$ by duality.

\begin{definition}
\label{Definition:hull}Let $\mathcal{C}$ be a monopole, and $x$ be an object
of $\mathcal{C}$.

\begin{itemize}
\item A \emph{completion }or hull $\mathrm{h}_{\mathcal{C}}\left( x\right) $
of $x$ in $\mathcal{C}$ is an amphi-terminal object in (the underlying
category of) $x\downarrow \mathcal{C}$;

\item The object $x$ of $\mathcal{C}$ is \emph{complete} in $\mathcal{C}$ if
its identity arrow is a completion of $x$ in $\mathcal{C}$.
\end{itemize}
\end{definition}

As a particular instance of the corresponding general fact about
amphi-limits, the completion of an object is essentially unique, if it
exists.

\begin{lemma}
\label{Lemma:hull-monic}Let $\mathcal{C}$ be a monopole. Suppose that $\eta
_{x}:x\rightarrow Jx$ is a completion of $x$ in $\mathcal{C}$. Then $\eta
_{x}$ is monic in $\mathcal{C}$.
\end{lemma}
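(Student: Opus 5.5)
The plan is to turn the two clauses of amphi-terminality of $\eta_x$ in $x\downarrow\mathcal{C}$ (Definition \ref{Definition:amphi-terminal}, applied as in Definition \ref{Definition:hull}) into left cancellation for $\eta_x$. Clause (2), \emph{at most one arrow out of $\eta_x$}, is the uniqueness half. As recorded right after Definition \ref{Definition:amphi-terminal}, it means in particular that $\eta_x$ has no nontrivial endomorphisms in $x\downarrow\mathcal{C}$. So every $\sigma:Jx\rightarrow Jx$ with $\sigma\eta_x=\eta_x$ equals $1_{Jx}$. Clause (1), weak terminality, is the existence half. Since $1_x$ is a positive arrow with source $x$, it is an object of $x\downarrow\mathcal{C}$, and so there is some $r:x\rightarrow Jx$ with $r\cdot 1_x=\eta_x$.

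Fix $a,b:w\rightarrow x$ with $\eta_x a=\eta_x b$; we must show $a=b$. The key step is to produce two arrows out of $\eta_x$ in $x\downarrow\mathcal{C}$, with a common target $g$, that can only coincide when $a=b$. Then clause (2) forces them to be equal, and we conclude.

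\begin{enumerate}
\item The first candidate for $g$ is $\eta_x$ itself. I would look for endomorphisms $\sigma_a,\sigma_b$ of $Jx$ with $\sigma_a\eta_x=\sigma_b\eta_x=\eta_x$, built so that $a$ (respectively $b$) can be read off from $\sigma_a$ (respectively $\sigma_b$). Clause (2) makes both equal to $1_{Jx}$, which should give $a=b$.
\item The endomorphisms would come from clause (1) applied to auxiliary positive arrows out of $x$, obtained by transporting $a$ and $b$ along the unique comparison maps.
\item If no such auxiliary positive arrows are available, the fallback is to take $g$ to be an object of $x\downarrow\mathcal{C}$ that receives a positive arrow through which $a$ and $b$ factor. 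Weak terminality then gives an arrow from $g$ into $\eta_x$, and uniqueness of arrows out of $\eta_x$ is applied to the composites.
\end{enumerate}

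The main obstacle is the construction of the auxiliary positive arrows in the first two steps. Everything else is formal bookkeeping inside $x\downarrow\mathcal{C}$. The monopole axioms as stated in Definition \ref{Definition:polarity} only guarantee that identities and isomorphisms are positive, so the positive arrows out of $x$ needed to detect $a$ and $b$ have to be manufactured from $a$, $b$ and $\eta_x$ alone. At this point I would try first to use that $\eta_x$ is itself positive and compose with the arrows provided by weak terminality, and check carefully whether the resulting arrows are indeed positive. If that fails, some additional structural hypothesis on the positive arrows seems unavoidable, and I would isolate exactly which one is needed.
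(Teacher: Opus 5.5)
\textbf{The gap.} The decisive step of your plan is left open: you never construct the positive arrows out of $x$ that would ``detect'' $a$ and $b$. It cannot be carried out, because left cancellation does not follow from the definitions.

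Unwinding Definition~\ref{Definition:hull}, the two clauses say the following.
\begin{enumerate}
\item Every positive $f:x\rightarrow y$ admits some $\psi:y\rightarrow Jx$ with $\psi f=\eta_{x}$.
\item If $\xi,\xi^{\prime}:Jx\rightarrow z$ satisfy $\xi\eta_{x}=\xi^{\prime}\eta_{x}$ and this composite is positive, then $\xi=\xi^{\prime}$.
\end{enumerate}
Both clauses involve only arrows with source $x$ or $Jx$. Arrows $a,b:w\rightarrow x$ composed \emph{before} $\eta_{x}$ never appear. Clause (1) applied to $1_{x}$ only returns $r=\eta_{x}$.

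\textbf{A counterexample.} Let $\mathcal{C}$ be $\mathbf{Set}$ with every arrow positive; the paper allows this. Take $x=\{0,1\}$ and $\eta_{x}:x\rightarrow\{\ast\}$.
\begin{itemize}
\item Every $f:x\rightarrow y$ admits $y\rightarrow\{\ast\}$ with composite $\eta_{x}$, so clause (1) holds.
\item An arrow $\eta_{x}\rightarrow g$ in $x\downarrow\mathcal{C}$ is a point $\xi:\{\ast\}\rightarrow z$ with $\xi\eta_{x}=g$. It is unique because $\eta_{x}$ is surjective, so clause (2) holds.
\end{itemize}
Hence $\eta_{x}$ is a completion of $x$. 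Yet the two points $\{\ast\}\rightarrow x$ show that $\eta_{x}$ is not monic. No choice of auxiliary arrows, and no fallback as in your step 3, can close the argument.

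\textbf{Comparison with the paper.} The paper's proof applies clause (2) in one line. Given $g,h:Jx\rightarrow y$ with $g\eta_{x}=h\eta_{x}$, both are arrows $\eta_{x}\rightarrow g\eta_{x}$ in $x\downarrow\mathcal{C}$, hence $g=h$.

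This is cancellation of $\eta_{x}$ on the \emph{right}, an epi-type property. It is also valid only when $g\eta_{x}$ is positive, so that it is an object of the co-slice. That is exactly what the definition delivers. It is not the left cancellation that ``monic'' means, which you, reading the statement correctly, set out to prove. The paper itself later uses monicity in the left-cancelling sense in Theorem~\ref{Theorem:main}(7). So the paper's argument does not supply your missing step either.

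Your closing instinct is right: an extra hypothesis is needed. The natural one is that positive arrows (or at least those in $\mathcal{H}$) are monic. This holds in every example in the paper, where positive arrows are embeddings or injective homomorphisms. Under that hypothesis the lemma is immediate, since $\eta_{x}$, being an object of $x\downarrow\mathcal{C}$, is itself positive.
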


\begin{proof}
Suppose that $g,h:Jx\rightarrow y$ are morphisms in $\mathcal{C}$ such that $%
g\eta _{x}=h\eta _{x}$. Then $g,h$ define two morphisms from $\eta _{x}$ to $%
g\eta _{x}=h\eta _{x}$ in $x\downarrow \mathcal{C}$. By the uniqueness
property of $\eta _{x}$ as the amphi-terminal object in $x\downarrow 
\mathcal{C}$, we must have $g=h$.
\end{proof}

\begin{lemma}
Let $\mathcal{C}$ be a monopole. If $\mu _{x}:x\rightarrow y$ is a
completion of $x$ in $\mathcal{C}$, then $y$ is closed in $\mathcal{C}$.
\end{lemma}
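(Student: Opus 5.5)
I read ``closed'' as ``complete'' in the sense of Definition \ref{Definition:hull}. So the claim is that the identity $1_{y}$ is an amphi-terminal object of $y\downarrow \mathcal{C}$. The approach is to check the two clauses of Definition \ref{Definition:amphi-terminal} directly, using the amphi-terminality of $\mu _{x}$ in $x\downarrow \mathcal{C}$.

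First, $1_{y}$ is an object of $y\downarrow \mathcal{C}$. This holds because $\mathcal{C}_{+}$ is a refinement, so it contains all isomorphisms and in particular all identities.

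The uniqueness clause is immediate. Let $f:y\rightarrow z$ be a positive arrow. An arrow $1_{y}\rightarrow f$ in $y\downarrow \mathcal{C}$ is some $g:y\rightarrow z$ with $g1_{y}=f$, so it must equal $f$. Hence there is at most one such arrow.

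The existence clause carries the content. Fix a positive $f:y\rightarrow z$; we need $g:z\rightarrow y$ with $gf=1_{y}$, which is an arrow $f\rightarrow 1_{y}$ in $y\downarrow \mathcal{C}$.
\begin{enumerate}
\item Since $\mu _{x}$ is positive (it is an object of $x\downarrow \mathcal{C}$) and $\mathcal{C}_{+}$ is closed under composition, $f\mu _{x}:x\rightarrow z$ is a positive arrow. It is therefore an object of $x\downarrow \mathcal{C}$.
\item By the weak terminality of $\mu _{x}$, there is an arrow $f\mu _{x}\rightarrow \mu _{x}$ in $x\downarrow \mathcal{C}$. This is a $g:z\rightarrow y$ with $gf\mu _{x}=\mu _{x}$.
\item Both $gf$ and $1_{y}$ are then arrows $\mu _{x}\rightarrow \mu _{x}$ in $x\downarrow \mathcal{C}$.
\item By the uniqueness clause for $\mu _{x}$ (at most one arrow out of $\mu _{x}$ to any object), $gf=1_{y}$.
\end{enumerate}
The only possible subtlety is the positivity bookkeeping. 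The objects of the coslice must be positive, while arrows in the coslice need not be, and this is exactly what the refinement axioms supply. I therefore expect no real obstacle. The argument is essentially the same as that of Lemma \ref{Lemma:hull-monic}, which uses uniqueness out of $\mu _{x}$ to cancel.
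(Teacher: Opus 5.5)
Your proof is correct and follows essentially the same route as the paper. You compose the positive arrow $f$ with $\mu_{x}$, use weak terminality of $\mu_{x}$ to obtain $g$ with $gf\mu_{x}=\mu_{x}$, and use the uniqueness clause for $\mu_{x}$ to conclude $gf=1_{y}$, with the uniqueness clause for $1_{y}$ being trivial. You also spell out points the paper leaves implicit: $1_{y}$ is positive because a refinement contains all isomorphisms, and arrows in the coslice need not be positive.
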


\begin{proof}
We need to show that $1_{y}:y\rightarrow y$ is a completion of $y$. Let $%
f:y\rightarrow a$ be a positive arrow. Then $f\mu _{x}:x\rightarrow a$ is a
positive arrow. Since $\mu _{x}$ is a completion of $x$, there exists an
arrow $g:a\rightarrow y$ such that $gf\mu _{x}=\mu _{x}$. Since $\mu _{x}$
is a completion of $x$, $gf=1_{y}$. This shows that $1_{y}$ is weakly
terminal in $x\downarrow \mathcal{C}$. It is obvious that it is
quasi-initial.
\end{proof}

Suppose now that $\mathcal{C}$ is a monopole, $\mathcal{E}$ is a category,
and $U:\mathcal{E}\rightarrow \underline{\mathcal{C}}$ is a functor. Let $%
x\downarrow _{\mathcal{C}}U$ be the category whose objects are pairs $\left(
b,f\right) $ where $b$ is an object of $\mathcal{E}$, and $f:a\rightarrow Ub$
is a positive arrow in $\mathcal{C}$. A morphism $\left( b,f\right)
\rightarrow \left( c,g\right) $ in $x\downarrow _{\mathcal{C}}U$ is a
morphism $\varphi :b\rightarrow c$ in $\mathcal{E}$ such that $U\varphi
\circ f=g$. Notice that this coincides with the underlying category of $%
x\downarrow \mathcal{C}$ when $U$ is the identity functor of $\underline{%
\mathcal{C}}$.

\begin{definition}
\label{Definition:hull-functor}Let $\mathcal{C}$ and $\mathcal{E}$ be
electricategories, $U:\mathcal{E}\rightarrow \mathcal{C}$ be a functor, and $%
x$ be an object of $\mathcal{C}$.\ A completion or hull $\mathrm{h}_{%
\mathcal{C}}^{U}\left( x\right) $ of $x$ in $\mathcal{C}$ \emph{with respect
to} $U$ is an \emph{terminal object} in $x\downarrow _{\mathcal{C}}U$.
\end{definition}

Notice that in Definition \ref{Definition:hull-functor} the completion is
required a terminal object, rather than an amphi-terminal object. This
discrepancy will be reconciled in Theorem \ref{Theorem:main}, where it is
explained how completions with respect to a functor will produce
\textquotedblleft absolute\textquotedblright\ completions as in Definition %
\ref{Definition:hull}.

\begin{definition}
\label{Definition:relative-hull}Let $\mathcal{C}$ be a monopole, $x$ be an
object of $\mathcal{C}$, and $\mathcal{H}$ a refinement of $\mathcal{C}_{+}$%
. We let $\mathcal{C}|_{\mathcal{H}}$ be the sub-monopole of $\mathcal{C}$
whose positive arrows are precisely the arrows of $\mathcal{H}$. A
completion $\mathrm{h}_{\mathcal{C}}\left( x|\mathcal{H}\right) $ of $x$ in $%
\mathcal{C}$ \emph{relative }to $\mathcal{H}$ is a completion of $x$ in the
monopole $\mathcal{C}|_{\mathcal{H}}$. We also denote the co-slice category $%
x\downarrow (\mathcal{C}|_{\mathcal{H}})$ as $x\downarrow _{\mathcal{H}}%
\mathcal{C}$.
\end{definition}

Likewise, one can define the completion of $x$ in $\mathcal{C}$ with respect
to a functor $U$ \emph{relative to} $\mathcal{H}$.

\begin{remark}
Various notions of \emph{completion }arise naturally in several areas of
mathematics, also under the name of \emph{envelope }or \emph{hull} or \emph{%
closure}. Sometime in their abstract definition the \emph{rigidity }properly
corresponding to the \emph{uniqueness} requirement in the definition of
amphi-terminal object (Definition \ref{Definition:amphi-terminal}) is
omitted. This is due to the fact that, generally, such a rigidity does not
hold in relation to the whole category. However, in most cases such a
rigidity does hold \emph{relatively }to a suitable class of morphisms.
\end{remark}

\subsection{Injectivity}

Let $\mathcal{C}$ be a polarity, and $x$ be an object of $\mathcal{C}$. We
define injective objects in a (positive) monopole in terms of an \emph{%
extension property }relative to positive arrows.

\begin{definition}
\label{Definition:injective-monopole}Let $\mathcal{C}$ be a monopole.

\begin{itemize}
\item An object $x$ of a monopole $\mathcal{C}$ is \emph{injective }in $%
\mathcal{C}$ if for every positive arrow $g:a\rightarrow b$ and every arrow $%
f:a\rightarrow x$ there exists an arrow $h:b\rightarrow x$ such that $hg=f$.

\item The monopole $\mathcal{C}$ has enough injectives if for every object $%
a $ of $\mathcal{C}$ there exists a positive arrow $a\rightarrow x$ in $%
\mathcal{C}$ with injective $x$.
\end{itemize}
\end{definition}

In the terminology of \cite[Definition 2.1]{adamek_injective_2002},
Definition \ref{Definition:injective-monopole} is asserting that $x$ is $%
\mathcal{C}_{+}$-injective in the category $\underline{\mathcal{C}}$.

We consider more generally the notion of injective object of a polarity.

\begin{definition}
\label{Definition:injective-polarity}Let $\mathcal{C}$ be a polarity.

\begin{itemize}
\item An object $x$ of a monopole $\mathcal{C}$ is \emph{injective }in $%
\mathcal{C}$ if for every positive arrow $g:a\rightarrow b$ and every
negative arrow $f:a\rightarrow x$ there exists an arrow $h:b\rightarrow x$
such that $hg=f$.

\item The polarity $\mathcal{C}$ has enough injectives if for every object $%
a $ of $\mathcal{C}$ there exists a positive arrow $a\rightarrow x$ in $%
\mathcal{C}$ with injective $x$.
\end{itemize}
\end{definition}

Definition \ref{Definition:injective-monopole} is the particular instance of
Definition \ref{Definition:injective-polarity} in the case of a positive
monopole (considered as a polarity where all arrows are negative).

The following lemma essentially shows that, in a context where completions
exist, any injective object is complete.

\begin{lemma}
\label{Lemma:injective-is-complete}Let $\mathcal{C}$ be a polarity, and $%
\mathcal{H}\subseteq \mathcal{C}_{+}$ be a refinement. Suppose that $x$ is
an object of $\mathcal{C}$, and let $\eta _{x}:x\rightarrow Ex$ be a
completion of $x$ in $\mathcal{C}$ with respect to $\mathcal{H}$. Then $\eta
_{x}$ is an isomorphism, and $x$ is complete.
\end{lemma}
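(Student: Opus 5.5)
The statement as printed omits a hypothesis that the sentence introducing it makes explicit. I will assume that $x$ is \emph{injective} in the polarity $\mathcal{C}$ in the sense of Definition \ref{Definition:injective-polarity}. The plan is to use injectivity to split $\eta _{x}$, and then to use the uniqueness half of amphi-terminality to show that the splitting is a two-sided inverse.

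First, unwind the definitions. By Definition \ref{Definition:relative-hull}, $\eta _{x}:x\rightarrow Ex$ is an amphi-terminal object of $x\downarrow _{\mathcal{H}}\mathcal{C}$. In particular $\eta _{x}$ lies in $\mathcal{H}\subseteq \mathcal{C}_{+}$, so it is positive. Since $\mathcal{C}_{-}$ is a refinement, it contains all identities, so $1_{x}:x\rightarrow x$ is a negative arrow.

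Next, apply injectivity of $x$ to the positive arrow $g=\eta _{x}$ and the negative arrow $f=1_{x}$. This gives an arrow $r:Ex\rightarrow x$ with $r\eta _{x}=1_{x}$.

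Then consider the endomorphism $\eta _{x}r:Ex\rightarrow Ex$. It satisfies $(\eta _{x}r)\eta _{x}=\eta _{x}(r\eta _{x})=\eta _{x}$, so it is an endomorphism of the object $\eta _{x}$ in $x\downarrow _{\mathcal{H}}\mathcal{C}$. Condition (2) of Definition \ref{Definition:amphi-terminal} says there is at most one arrow out of $\eta _{x}$ to any given object. Hence $\eta _{x}r=1_{Ex}$. (Equivalently, since $\eta _{x}$ is monic by Lemma \ref{Lemma:hull-monic}, one could argue directly from there.) Combined with $r\eta _{x}=1_{x}$, this shows that $\eta _{x}$ is an isomorphism.

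Finally, show that $x$ is complete, i.e.\ that $1_{x}$ is amphi-terminal in $x\downarrow _{\mathcal{H}}\mathcal{C}$. Note that $1_{x}$ is an object of this coslice because $\mathcal{H}$ is a refinement and so contains identities. The arrow $\eta _{x}$, viewed as a morphism $1_{x}\rightarrow \eta _{x}$ in the coslice (since $\eta _{x}\circ 1_{x}=\eta _{x}$), is an isomorphism there, with inverse $r$ (since $r\eta _{x}=1_{x}$). Amphi-terminality is invariant under isomorphism: one composes existence witnesses with $r$, and transports uniqueness through $\eta _{x}$. Therefore $1_{x}$ is amphi-terminal, i.e.\ $x$ is complete.

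The only delicate point is recognising the missing injectivity hypothesis and checking that $1_{x}$ is negative and that $\eta _{x}$ is positive, so that the polarized extension property applies.
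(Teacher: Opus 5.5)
Your proof is correct and follows the paper's argument: injectivity of $x$, applied to the positive arrow $\eta_{x}$ and the negative arrow $1_{x}$, gives a retraction $r$ with $r\eta_{x}=1_{x}$, and the uniqueness half of amphi-terminality (the paper's ``rigidity'') then forces $\eta_{x}r=1_{Ex}$; your transport of amphi-terminality along the isomorphism $\eta_{x}:1_{x}\rightarrow \eta_{x}$ in the coslice also spells out why $x$ is complete, which the paper leaves implicit. The only quibble is your parenthetical alternative: getting $\eta_{x}r=1_{Ex}$ from $(\eta_{x}r)\eta_{x}=1_{Ex}\eta_{x}$ requires cancelling $\eta_{x}$ on the right, which is an epic-type property (what the proof of Lemma \ref{Lemma:hull-monic} actually establishes, despite its name) rather than monicity, so your direct appeal to amphi-terminality is the correct justification.
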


\begin{proof}
By injectivity there exists an arrow $g:Ex\rightarrow x$ such that $g\eta
_{x}=1_{x}$. Thus, we have that $\eta _{x}g\eta _{x}=\eta _{x}$ and by
rigidity $\eta _{x}g=1$. This shows that $\eta _{x}$ is an isomorphism.
\end{proof}

\subsection{From completions to capacitors}

Recall the notion of\emph{\ arrow from an object to a functor} from \cite[%
Section III.1]{mac_lane_categories_1998}. If $U:\mathcal{E}\rightarrow 
\mathcal{C}$ is a functor between categories, then an arrow from an object $%
x $ of $\mathcal{C}$ to $U$ is simply a pair $\left( \eta ,a\right) $ where $%
a$ is an object of $\mathcal{E}$ and $\eta :x\rightarrow Ua$ is an arrow in $%
\mathcal{C}$.

\begin{definition}
\label{Definition:rigid}Let $\mathcal{C}$ and $\mathcal{E}$ be categories,
and $U:\mathcal{E}\rightarrow \mathcal{C}$ be a functor. For every object $x$
of $\mathcal{C}$ let $\left( \eta _{x},Jx\right) $ be an arrow from $x$ to $%
U $. This means that $Jx$ is an object of $\mathcal{E}$ and $\eta
_{x}:x\rightarrow UJx$ is a morphism. Say that the family $\left( \eta
_{x},Jx\right) _{x\in \mathcal{C}}$ is \emph{rigid }with respect to $U$ if:

\begin{enumerate}
\item for every arrow $f:x\rightarrow y$ in $\mathcal{C}$ there exists at
most one arrow $\psi :Jx\rightarrow Jy$ in $\mathcal{E}$ such that the square%
\begin{equation*}
\begin{array}{ccc}
x & \overset{f}{\rightarrow } & y \\ 
\eta _{x}\downarrow &  & \downarrow \eta _{y} \\ 
UJx & \underset{U\psi }{\rightarrow } & UJy%
\end{array}%
\end{equation*}%
commutes;

\item for every arrow $f:UJx\rightarrow UJx$ in $\mathcal{C}$ for which the
square%
\begin{equation*}
\begin{array}{ccc}
x & \overset{\mathrm{id}_{x}}{\rightarrow } & x \\ 
\eta _{x}\downarrow &  & \downarrow \eta _{x} \\ 
UJx & \underset{f}{\rightarrow } & UJx%
\end{array}%
\end{equation*}%
commutes, we have $f=\mathrm{id}_{Jx}$.
\end{enumerate}
\end{definition}

Let us assume that $\mathcal{C}$ is a monopole, $\mathcal{E}$ is a category,
and $U:\mathcal{E}\rightarrow \underline{\mathcal{C}}$ is a functor. Let
also $\mathcal{H}$ be a refinement of $\mathcal{C}_{+}$. Suppose that every
object $x$ of $\mathcal{C}$ admits a completion $\eta _{x}:x\rightarrow Jx$
in $\mathcal{C}$ \emph{with respect to} $U$ \emph{relative to }$\mathcal{H}$%
. Then the universal property of the completion (Definition \ref%
{Definition:hull-functor}) produces a functor 
\begin{equation*}
J_{+}:\mathcal{C}_{+}^{\mathrm{op}}\rightarrow \mathcal{E}\text{,\quad }%
x\mapsto Jx\text{,\quad }f\mapsto J_{+}f\text{.}
\end{equation*}%
For an arrow $f$ of $\mathcal{C}_{+}$, the arrow $J_{+}f:Jx\rightarrow Jy$
in $\mathcal{E}$ is uniquely determined by the requirement that the diagram%
\begin{equation*}
\begin{array}{ccc}
x & \overset{f}{\rightarrow } & y \\ 
\eta _{x}\downarrow  &  & \downarrow \eta _{y} \\ 
UJx & \underset{UJ_{+}f}{\leftarrow } & UJy%
\end{array}%
\end{equation*}%
commutes. By definition, we have a natural transformation $\eta :\dagger _{%
\mathcal{C}_{+}}\Rightarrow UJ_{+}$ where 
\begin{equation*}
\dagger _{\mathcal{C}_{+}}:\mathcal{C}_{+}^{\mathrm{op}}\rightarrow \mathcal{%
C}_{+}
\end{equation*}%
is the canonical arrow-reversing functor. We will use these observations in
the following:

\begin{definition}
\label{Definition:capacitor}Let $\mathcal{C}$ be a (positive) monopole. A 
\emph{capacitor }for $\mathcal{C}$ is given by:

\begin{itemize}
\item a refinement $\mathcal{H}$ of $\mathcal{C}_{+}$;

\item a category $\mathcal{E}$;

\item a \emph{faithful} functor $U:\mathcal{E}\rightarrow \underline{%
\mathcal{C}}$;
\end{itemize}

such that:

\begin{enumerate}
\item every object $x$ of $\mathcal{C}$ has a completion $\mu
_{x}:x\rightarrow Jx$ in $\mathcal{C}_{+}$ with respect to $U$ relative to $%
\mathcal{H}$, giving rise to a functor 
\begin{equation*}
J_{+}:\mathcal{C}_{+}^{\mathrm{op}}\rightarrow \mathcal{E}
\end{equation*}%
and a natural transformation 
\begin{equation*}
\eta :\dagger _{\mathcal{C}_{+}}\Rightarrow J_{+}\text{;}
\end{equation*}

\item the resulting family $\left( x,Jx\right) _{x\in \mathcal{C}}$ is \emph{%
rigid} with respect to $U:\mathcal{E}\rightarrow \mathcal{C}$.
\end{enumerate}
\end{definition}

The notions of capacitor and voltage should be compared with Kock--Z\"{o}%
berlein (KZ) doctrines and Yoneda structures on $2$-categories \cite%
{walker_yoneda_2018,street_yoneda_1978,marmolejo_kan_2012,di_liberti_accessibility_2023}%
. Also in the context of KZ doctrines one has for each object $a$ an arrow $%
\sigma _{a}:a\rightarrow Sa$. In that case, the \emph{universal property }of
such arrows requires to have, for each arrow $f:a\rightarrow Sb$, a left Kan
extension of $f$ along $\sigma _{a}$ subject to some natural requirements 
\cite[Definition 2.1]{di_liberti_accessibility_2023}. Also in that context,
a second \textquotedblleft contravariant\textquotedblright\ universal
property, that holds for the so-called \emph{admissible }arrows, is also
considered \cite[Definition 2.18]{di_liberti_accessibility_2023}. The
construction of involutive monads starting from certain contravariant
structures (\emph{construction standard contravariant}) is the subject of 
\cite{guitart_monades_1975}.

\subsection{From capacitors to voltages}

Recall that if $\mathcal{E}$ is a \emph{full} subcategory of $\mathcal{C}$,
then one says that $\mathcal{E}$ is a \emph{reflective subcategory }if the
inclusion functor $U:\mathcal{E}\rightarrow \mathcal{C}$ has a left adjoint $%
F$ \cite[Section IV.3]{mac_lane_categories_1998}. One can think of $\mathcal{%
E}$ as a category of \textquotedblleft complete\textquotedblright\ objects,
and $F$ as a \textquotedblleft completion\textquotedblright\ functor. If $%
\eta $ is the unit of the adjunction, then for each object $x$ of $\mathcal{C%
}$, $\eta _{x}:x\rightarrow Fx$ maps $x$ to its \textquotedblleft
completion\textquotedblright\ $Fx$. For example, one can consider the
category $\mathcal{E}$ of complete metric spaces, as a full subcategory of
the category $\mathcal{C}$ of arbitrary metric spaces and nonexpansive maps.
In this case, the functor $F$ maps each metric space to its
(Cauchy-)completion.

We following result can be seen in particular as a generalization of
reflectors to a context where, as it is often the case, they cannot be
defined on the whole category one begins with:

\begin{theorem}
\label{Theorem:main}Let $\mathcal{C}$ be a (positive) \emph{monopole}. Let
also $\left( \mathcal{H},U:\mathcal{E}\rightarrow \underline{\mathcal{C}}%
\right) $ be a \emph{capacitor} for $\mathcal{C}$. Consider:

\begin{itemize}
\item the corresponding functor $J_{+}:\mathcal{C}_{+}^{\mathrm{op}%
}\rightarrow \mathcal{E}$, and

\item the corresponding natural transformation $\eta _{x}:\dagger _{\mathcal{%
C}_{+}}\Rightarrow J_{+}$
\end{itemize}

as in Definition \ref{Definition:capacitor}. Define $\mathcal{C}_{-}$ to be
the collection of arrows $f:x\rightarrow y$ for which there exists a
(necessarily unique) arrow 
\begin{equation*}
J_{-}f:Jx\rightarrow Jy
\end{equation*}%
in $\mathcal{E}$ making the diagram%
\begin{equation*}
\begin{array}{ccc}
x & \overset{f}{\rightarrow } & y \\ 
\eta _{x}\downarrow &  & \downarrow \eta _{y} \\ 
UJx & \underset{UJ_{-}f}{\rightarrow } & UJy%
\end{array}%
\end{equation*}%
commutes. Define $E_{+}:=UJ_{+}$, $E_{-}=EJ_{-}$, and $Ex:=UJx$. Then:

\begin{enumerate}
\item $\mathcal{C}_{-}$ is a refinement of $\mathcal{C}$, turning $\mathcal{C%
}$ into a polarity with $\mathcal{C}_{-}$ as set of negative arrows;

\item $J_{-}:\mathcal{C}_{-}\rightarrow \mathcal{E}$ is a functor;

\item $E_{+}$ and $E_{-}$ define a negative endofunctor $E$ of the polarity $%
\mathcal{C}$;

\item $\mu :1_{\mathcal{C}}\Rightarrow E$ is a natural transformation;

\item $E$ and $\mu $ endow $\mathcal{C}$ with the structure of a voltage;

\item for every object $x$ of $\mathcal{C}$, $\mu _{x}:x\rightarrow Ex$ is
the completion of $x$ in $\mathcal{C}$ relative to $\mathcal{H}$;

\item the components of $\eta $ are \emph{monic} and \emph{negatively epic};

\item $E_{-}$ is a fully faithful functor $\mathcal{C}_{-}\rightarrow 
\mathcal{C}_{-}$;

\item for every arrow $f:x\rightarrow y$ in $\mathcal{C}_{-}$, 
\begin{equation*}
E_{-}f:Ex\rightarrow Ey
\end{equation*}%
is the unique arrow in $\mathcal{C}_{-}$ making the diagram 
\begin{equation*}
\begin{array}{ccc}
x & \overset{f}{\rightarrow } & y \\ 
\eta _{x}\downarrow &  & \downarrow \eta _{y} \\ 
Ex & \underset{E_{-}f}{\rightarrow } & Ey%
\end{array}%
\end{equation*}%
commute;

\item $\mu $ is the unit of an adjunction $U\dashv J_{-}$ between the
functors $J_{-}:\mathcal{C}_{-}\rightarrow \mathcal{E}$ and $U:\mathcal{E}%
\rightarrow \mathcal{C}_{-}$;

\item $U$ establishes an equivalence between $\mathcal{E}$ and a reflective
full subcategory of $\mathcal{C}_{-}$;

\item an object $x$ is injective in the polarity $\mathcal{C}$ if and only
if it is complete with respect to $\mathcal{H}$.
\end{enumerate}
\end{theorem}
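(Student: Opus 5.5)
We work throughout with the data of Definition~\ref{Definition:capacitor}: for each $x$ a terminal object $(Jx,\mu_x)$ of $x\downarrow_{\mathcal{H}}U$ (so $\mu_x=\eta_x$, and $\mu_x$ lies in $\mathcal{H}$), and the rigidity of Definition~\ref{Definition:rigid}. The strategy is to derive every item from three facts: the terminal universal property, rigidity, and faithfulness of $U$. The order will be (1)--(2), then (7), (9), (8), (10), (11), and finally (6), (12), (3)--(5).

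For (1) and (2), note that the square defining $J_-f$ has at most one filler by rigidity (1). Identities admit $\mathrm{id}_{Jx}$. Given composable $f,g$ in $\mathcal{C}_-$, the arrow $J_-g\circ J_-f$ fills the square for $gf$, so it is the unique such filler, namely $J_-(gf)$. This gives closure under composition and functoriality. Isomorphisms require more care. For an iso $f$, both $f$ and $f^{-1}$ lie in $\mathcal{H}$, since $\mathcal{H}$ is a refinement. The universal property then gives $J_+(f^{-1})$ with $UJ_+(f^{-1})\mu_x=\mu_y f$, which is exactly a filler for $f$ in the covariant direction. Hence $f\in\mathcal{C}_-$.

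For (7), monicity of $\mu_x$ is Lemma~\ref{Lemma:hull-monic}, applied to the co-slice via rigidity (2). Indeed, if $g\mu_x=h\mu_x$ with $g,h$ negative, then by (9) they are of the form $E_-$ of something, and faithfulness of $U$ together with rigidity forces $g=h$. Item (9) is simply the definition of $J_-$ plus uniqueness. For (8), faithfulness of $E_-=UJ_-$ comes from $\mu$ being monic and negatively epic: $E_-f\,\mu_x=\mu_y f$ determines $f$. Fullness means showing that every negative $g\colon Ex\to Ey$ arises as some $E_-f$; this should follow by precomposing with $\mu_x$ and using rigidity.

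For (10) and (11), the universal arrow from $x$ is $\mu_x$. For $h\colon x\to Ub$ negative, I would take $\mathrm{id}\colon Ub\to Ub$ and use that objects $Ub$ are complete: $\mu_{Ub}$ is an iso by a variant of Lemma~\ref{Lemma:injective-is-complete}. Then $J_-h$ composed with $\mu_{Ub}^{-1}$ gives the factorization, and uniqueness comes from rigidity. Reflectivity in (11) follows, with $U$ landing in complete objects.

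For (6), (12) and (3)--(5), amphi-terminality of $\mu_x$ in $x\downarrow(\mathcal{C}|_{\mathcal{H}})$ is immediate. Weak terminality comes from the $J_+$ arrow applied to positive arrows out of $x$, and uniqueness comes from rigidity combined with the fact that an arrow out of $UJx$ agreeing on $\mu_x$ is determined. Item (12) is then Lemma~\ref{Lemma:injective-is-complete} in one direction and the extension property of $\mu$ in the other. Items (3)--(5) assemble these pieces; the isomorphism $\eta_{E}$ holds because $Ex$ is complete, by the earlier closedness lemma.

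The main obstacle is the correct placement of $U$ and of what ``positive/negative'' means on $Ex=UJx$. In particular, proving that objects in the image of $U$ are complete, and that rigidity suffices for the uniqueness in (10), are the delicate points. I would first try to establish that $\mu_{UJx}$ is an isomorphism, with inverse $U$ applied to the counit obtained from terminality at $(Jx,\mathrm{id})$.
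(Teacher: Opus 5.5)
Your argument for (1)--(2) is correct, including the treatment of isomorphisms via $J_+(f^{-1})$, which the paper leaves implicit. The weak-terminality half of (6) is also fine. Beyond that there are genuine gaps, and several cannot be closed because the claims do not follow from the capacitor axioms as stated.

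The first gap is (7). What you write is cancellation on the right, $g\mu_x=h\mu_x\Rightarrow g=h$, which is an epi-type property. This is also all that the proof of Lemma~\ref{Lemma:hull-monic} actually establishes. Nothing in it yields $\mu_x a=\mu_x b\Rightarrow a=b$. In addition, (9) does not say that negative arrows out of $Ex$ have the form $E_-(\cdot)$. You also invoke (9) and the lemma (which presupposes (6)) before either is available.

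This cannot be repaired. Take $\mathbf{Set}$ with every map positive and in $\mathcal{H}$, let $\mathcal{E}$ be the terminal category, and set $U(\ast)=1$. Every $x\downarrow_{\mathcal{H}}U$ has one object and one arrow, and rigidity is trivial, so this is a capacitor with $\mathcal{C}_-$ equal to all maps. In it:
\begin{itemize}
\item $\mu_x\colon x\to 1$ is not monic;
\item $E_-$ is constant at $\mathrm{id}_1$, so the faithfulness in (8) fails as well;
\item the two maps $1\rightrightarrows 2$ agree on $\mu_\emptyset$, so $\mu_\emptyset$ is neither negatively epic nor amphi-terminal in $\emptyset\downarrow_{\mathcal{H}}\mathcal{C}$.
\end{itemize}
The last point is exactly the ``fact that an arrow out of $UJx$ agreeing on $\mu_x$ is determined'' that you use for (6). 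Rigidity (2) only controls \emph{endomorphisms} of $UJx$. Weak terminality plus trivial endomorphisms does not give amphi-terminality: the terminal set $1$ has both properties but admits two maps to $2$. The paper's proof of (6) has the same defect, and its monicity claim in (7) rests on the same lemma, so you cannot borrow those steps.

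Fullness in (8) cannot come from precomposing with $\mu_x$, because $g\mu_x\colon x\to Ey$ need not factor through $\mu_y$. In the Boolean setting of Section~\ref{Section:Boolean}, let $\mathrm{FC}(\mathbb{N})$ be the finite--cofinite algebra, whose completion is $\mathcal{P}(\mathbb{N})$, and take $\mu_{\mathcal{P}(\mathbb{N})}=\mathrm{id}$. Then the identity is a negative arrow $E\mathcal{P}(\mathbb{N})\to E\,\mathrm{FC}(\mathbb{N})$. But every $E_-f=\mu_{\mathrm{FC}(\mathbb{N})}f$ has image in $\mathrm{FC}(\mathbb{N})$, so the identity is not of this form.

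For (10), the claim that objects $Ub$ are complete is unsupported, and (10) itself fails. Take $\mathcal{C}=\mathcal{E}=\{0\to 1\}$ with $U=\mathrm{id}$ and all arrows in $\mathcal{H}$. Then $J0=J1=1$ and $\mu_{U0}=(0\to 1)$ is not invertible. The negative arrow $\mathrm{id}_0\colon 0\to U0$ has no factorization $U\varphi\circ\mu_0$, since $\mathcal{E}(1,0)=\emptyset$. Moreover, rigidity (1) bounds only fillers $Jx\to Jy$, not arrows $Jx\to b$, and your (11) rests on (10).

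Your proposed inverse of $\mu_{UJx}$ points the wrong way. Terminality at $(Jx,\mathrm{id}_{UJx})$ gives $\epsilon\colon Jx\to JUJx$ with $U\epsilon=\mu_{UJx}$, which is a lift of $\mu_{UJx}$, not an inverse. What works is this:
\begin{itemize}
\item apply terminality of $(Jx,\mu_x)$ to the object $(JUJx,\mu_{UJx}\mu_x)$, obtaining $\rho$ with $U\rho\,\mu_{UJx}\mu_x=\mu_x$;
\item rigidity (2) at $x$, then at $UJx$, shows $U\rho=\mu_{UJx}^{-1}$;
\item by faithfulness, $\epsilon$ is then an isomorphism in $\mathcal{E}$.
\end{itemize}
This proves the voltage axiom in (5) without using (6). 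It also shows that every negative arrow $Ex\to Ey$ is the $U$-image of an $\mathcal{E}$-arrow $Jx\to Jy$, so (9) follows from rigidity (1). Together with ``$x$ complete iff $\mu_x$ invertible'' (via rigidity (2) and $J_+$), it gives (12). This requires reading the positive arrows as $\mathcal{H}$, which is also where the universal property defines $J_+$, and hence $E_+$ in (3).
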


\begin{proof}
The proof of (1)--(5) is straightforward. The fact that $\eta
_{Ex}:Ex\rightarrow EEx$ is an isomorphism follows from\ (6), which implies
in particular that $Ex$ is closed in $\mathcal{C}_{+}$ relative to $\mathcal{%
H}$.

(6) By hypothesis $\eta _{x}:x\rightarrow Ex$ is the completion of $x$ in $%
\mathcal{C}_{+}$ with respect to $U$ relative to $\mathcal{H}$. Thus, $\eta
_{x}$ is a terminal object in the category $x\downarrow _{\mathcal{H}}U$.
Recall that the objects of $x\downarrow _{\mathcal{H}}U$ are pairs $\left(
\xi ,a\right) $ where $a$ is an object of $\mathcal{E}$ and $\xi
:x\rightarrow Ua$ is an arrow in $\mathcal{H}$. Let $f:x\rightarrow y$ be an
arrow in $\mathcal{H}$. Consider the object $\left( Jy,\eta _{y}\circ
f\right) $ of $x\downarrow _{\mathcal{H}}U$. Then by the universal property
of $\mu _{x}$ there exists an arrow $\psi :Jy\rightarrow Jx$ making the
diagram%
\begin{equation*}
\begin{array}{ccc}
x & \overset{f}{\rightarrow } & y \\ 
\eta _{x}\downarrow &  & \downarrow \eta _{y} \\ 
Ex & \underset{U\psi }{\leftarrow } & Ey%
\end{array}%
\end{equation*}%
commute. This shows that $\eta _{x}$ is weakly terminal in $x\downarrow _{%
\mathcal{H}}{}\mathcal{C}$. Suppose that $f$ is an endomorphism of $Ex$ in $%
\mathcal{C}$ such that the diagram%
\begin{equation*}
\begin{array}{ccc}
x & \overset{\mathrm{id}_{x}}{\rightarrow } & x \\ 
\eta _{x}\downarrow &  & \downarrow \eta _{y} \\ 
Ex & \underset{f}{\rightarrow } & Ey%
\end{array}%
\end{equation*}%
commutes. Then by the rigidity hypothesis in the definition of
capacitor---Definition \ref{Definition:capacitor}(2)---we must have $%
f=J_{-}\left( \mathrm{id}_{x}\right) =\mathrm{id}_{Jx}$. This shows that $%
\eta _{x}$ is rigid in $x\downarrow _{\mathcal{H}}\mathcal{C}$. This
concludes the proof that $\eta _{x}$ is the completion of $x$ relative to $%
\mathcal{H}$;

(7) The arrow $\eta _{x}$ is monic in $\mathcal{C}$ by (6) and Lemma \ref%
{Lemma:hull-monic}. We now show that $\eta _{x}:x\rightarrow Jx$ is \emph{%
negatively epic }in $\mathcal{C}$; see Definition \ref{Definition:epic}.
Suppose that $g,h:Jx\rightarrow y$ are arrows in $\mathcal{C}_{-}$ such that 
$g\eta _{x}=h\eta _{x}$. By definition of $\mathcal{C}_{-}$ and $J_{-}$, we
have that $J_{-}g:Jx\rightarrow Jy$ is the only morphism in $\mathcal{E}$
such that the diagram%
\begin{equation*}
\begin{array}{ccc}
Jx & \overset{g}{\rightarrow } & y \\ 
1_{Jx}\downarrow &  & \downarrow \eta _{y} \\ 
Jx & \underset{E_{-}g}{\rightarrow } & Jy%
\end{array}%
\end{equation*}%
commutes, i.e., such that $E_{-}g=\eta _{y}g$. Likewise, $%
J_{-}h:Jx\rightarrow Jy$ is the only morphism in $\mathcal{E}$ that makes
the diagram%
\begin{equation*}
\begin{array}{ccc}
x & \overset{g}{\rightarrow } & Jy \\ 
\eta _{x}\downarrow &  & \downarrow 1_{Jy} \\ 
Jx & \underset{E_{-}h}{\rightarrow } & Jy%
\end{array}%
\end{equation*}%
commute, i.e., such that $E_{-}h=\eta _{y}h$. Consider the commuting diagrams%
\begin{equation*}
\begin{array}{ccc}
x & \overset{g\eta _{x}}{\rightarrow } & y \\ 
\eta _{x}\downarrow &  & \downarrow \eta _{y} \\ 
Jy & \underset{\eta _{y}g}{\rightarrow } & Jy%
\end{array}%
\text{\quad and\quad }%
\begin{array}{ccc}
x & \overset{h\eta _{x}}{\rightarrow } & y \\ 
\eta _{x}\downarrow &  & \downarrow \eta _{y} \\ 
Jy & \underset{\eta _{y}h}{\rightarrow } & Jy%
\end{array}%
\end{equation*}%
we conclude that%
\begin{equation*}
\left( E_{-}h\right) \eta _{x}=\left( \eta _{y}h\right) \eta _{x}=\eta
_{y}\left( h\eta _{x}\right) =\eta _{y}\left( g\eta _{x}\right) =\left( \eta
_{y}g\right) \eta _{x}=\left( E_{-}g\right) \eta _{x}\text{.}
\end{equation*}%
By the rigidity of $\eta $ we conclude that 
\begin{equation*}
J_{-}h=J_{-}g
\end{equation*}%
and hence%
\begin{equation*}
\eta _{y}h=E_{-}h=E_{-}g=\eta _{y}g\text{.}
\end{equation*}%
Since $\eta _{y}$ is monic in $\mathcal{C}$ we conclude that $g=h$.

(8) Let $f:x\rightarrow y$ be an arrow. Then $J_{-}f:Jx\rightarrow Jy$
witnesses that $E_{-}f:Ex\rightarrow Ey$ belongs to $\mathcal{C}_{-}$
considering the isomorphism $\eta _{Jx}\cong 1_{Jx}$ and $\eta _{Jy}\cong
1_{Jy}$. The fact that $E_{-}$ is faithful follows from the assumption that $%
U$ be faithful in Definition \ref{Definition:capacitor} together with the
fact established in (5) that the arrows $\eta _{x}$ are monic. Finally, the
fullness of $E_{-}$ is manifest by the definition of $\mathcal{C}_{-}$.

(9) Suppose that $f:x\rightarrow y$ is an arrow in $\mathcal{C}_{-}$. Let $%
g:Ex\rightarrow Ey$ be an arrow such that%
\begin{equation*}
\begin{array}{ccc}
x & \overset{f}{\rightarrow } & y \\ 
\eta _{x}\downarrow &  & \downarrow \eta _{y} \\ 
Ex & \underset{g}{\rightarrow } & Ey%
\end{array}%
\end{equation*}%
commutes. We need to prove $g=E_{-}f$. By (8) we have $g=E_{-}h$ for some
arrow $h:x\rightarrow y$. Applying the \emph{uniqueness }requirement in
Definition \ref{Definition:rigid}(1) to the arrows $J_{-}f$ and $J_{-}h$ of $%
\mathcal{E}$ we obtain $J_{-}f=J_{-}h$ and hence $f=E_{-}f=E_{-}h=g$.

(10) This follows from the definition of $\mathcal{C}_{-}$, the fact that $%
\eta _{J}$ is a natural isomorphism, and the uniqueness of the arrow $J_{-}f$
in the hypotheses.

(11)\ The functor $U:\mathcal{E}\rightarrow \mathcal{C}_{-}$ is faithful by
hypothesis and full by (8). The conclusion thus follows from (10).

(12) We being with showing that every complete object is injective. Suppose
that $x$ is complete. Let $g:a\rightarrow b$ be a positive arrow and $%
f:a\rightarrow b$ is a negative arrow. Then $E_{-}f:Ea\rightarrow Ex\cong x$
is an arrow such that $E_{-}f\circ \mu _{a}=f$. Then $E_{+}g:b\rightarrow Ea$
such that $E_{+}g\circ g=\mu _{a}$. Thus we have that $E_{-}f\circ
E_{+}f:b\rightarrow x$ satisfies 
\begin{equation*}
E_{-}f\circ E_{+}g\circ g=E_{-}f\circ \mu _{a}=f\text{.}
\end{equation*}%
This concludes the proof that $x$ is injective.

Conversely, if an object is injective, then it is complete by Lemma \ref%
{Lemma:injective-is-complete} and (6).
\end{proof}

\begin{corollary}
\label{Corollary:main}Under the hypotheses of Theorem \ref{Theorem:main},
the following assertions are equivalent:

\begin{enumerate}
\item the positive monopole $\mathcal{C}$ has enough injectives;

\item every complete object in $\mathcal{C}$ with respect to $\mathcal{H}$
is injective in the positive monopole $\mathcal{C}$;

\item the injective objects of the positive monopole $\mathcal{C}$ are
precisely the objects that are complete with respect to $\mathcal{C}$.
\end{enumerate}
\end{corollary}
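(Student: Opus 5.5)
We will prove the cycle (1)$\Rightarrow$(2)$\Rightarrow$(3)$\Rightarrow$(1). We work in the positive monopole $\mathcal{C}$ with positive arrows $\mathcal{H}$, regarded as in the remark after Definition \ref{Definition:injective-polarity} as a polarity where every arrow is negative. The two tools are Lemma \ref{Lemma:injective-is-complete} and items (6) and (12) of Theorem \ref{Theorem:main}. Item (6) says that every $x$ has a completion $\mu _{x}:x\rightarrow Ex$ relative to $\mathcal{H}$, and the lemma following Lemma \ref{Lemma:hull-monic} says that $Ex$ is complete. Lemma \ref{Lemma:injective-is-complete} together with (6) says that every injective object is complete. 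So in every part, one inclusion (injective $\Rightarrow$ complete) comes for free, and the work is the converse.

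For (1)$\Rightarrow$(2), let $x$ be complete and pick a positive arrow $g:x\rightarrow z$ with $z$ injective. By the weak terminality of $1_{x}$ in $x\downarrow _{\mathcal{H}}\mathcal{C}$, there is $r:z\rightarrow x$ with $rg=1_{x}$, so $x$ is a retract of $z$. A retract of an injective object is injective. Indeed, given a positive $k:a\rightarrow b$ and any $f:a\rightarrow x$, extend $gf$ along $k$ to some $h:b\rightarrow z$; then $rh$ satisfies $rhk=rgf=f$. This is the step that actually uses the hypothesis, and it is short.

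The implication (2)$\Rightarrow$(3) is immediate: (2) gives complete $\Rightarrow$ injective, and Lemma \ref{Lemma:injective-is-complete} with (6) gives injective $\Rightarrow$ complete.

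For (3)$\Rightarrow$(1), given $a$, take $\mu _{a}:a\rightarrow Ea$. Since $\mu _{a}$ belongs to $x\downarrow _{\mathcal{H}}\mathcal{C}$, it is positive (in $\mathcal{H}\subseteq \mathcal{C}_{+}$). Since $Ea$ is complete, it is injective by (3). Hence $\mathcal{C}$ has enough injectives.

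The main obstacle is bookkeeping rather than mathematics. Theorem \ref{Theorem:main}(12) is phrased for the induced polarity, where only negative arrows need to extend. Here, by contrast, injectivity in the positive monopole requires extending arbitrary arrows. This is why (2) is not automatic and why (1) is needed. I will make sure that in (1)$\Rightarrow$(2) the retraction argument extends arbitrary arrows $f$. I will also make sure that "enough injectives" and "complete" are taken relative to the same class $\mathcal{H}$ throughout, and read as $\mathcal{H}$ where the statement says "with respect to $\mathcal{C}$".
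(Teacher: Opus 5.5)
Your proposal is correct and follows the paper's proof. The paper also proves (1)$\Rightarrow$(2) by exhibiting a complete object as a retract of an injective one. It writes the retraction as $E_{+}g$, which, like your appeal to weak terminality of $1_{x}$, only makes sense for $g\in\mathcal{H}$. It then handles (2)$\Rightarrow$(3) and (3)$\Rightarrow$(1) through Lemma \ref{Lemma:injective-is-complete} and the arrows $\mu_{a}$, just as you do.

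Your decision to read ``positive'' as ``in $\mathcal{H}$'' throughout is essential, not just bookkeeping. If injectivity were taken with respect to all of $\mathcal{C}_{+}$ and $\mathcal{H}\subsetneq\mathcal{C}_{+}$, the implication (1)$\Rightarrow$(2) can fail. For example, take Boolean algebras with embeddings positive, $\mathcal{H}$ the isomorphisms and $U$ the identity. This is a capacitor in which every object is complete and there are enough injectives, yet only complete Boolean algebras are injective for embeddings.
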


\begin{proof}
(1)$\Rightarrow $(2) Let $a$ be a complete object. Let $g:a\rightarrow z$ be
a positive arrow with target $z$. Then since $a$ is complete we have that $%
E_{+}g:z\rightarrow Ea\cong a$ is an arrow such that $E_{+}g\circ g=1_{a}$.
Suppose now that $\varphi :x\rightarrow y$ is a positive arrow in $\mathcal{C%
}$ and $f:x\rightarrow a$ is an arrow in $\mathcal{C}$.\ Applying the
hypothesis of injectivity of $z$ to $\varphi $ and $g\circ f:x\rightarrow z$
we obtain an arrow $f^{\prime }:y\rightarrow z$ such that $f^{\prime }\circ
\varphi =g\circ f$. Thus, $E_{+}g\circ f^{\prime }\circ \varphi =E_{+}g\circ
g\circ f=f$. This shows that $a$ is injective.

(2)$\Rightarrow $(3) Every complete object in $\mathcal{C}$ with respect to $%
\mathcal{H}$ is injective by assumption.\ The converse holds by Lemma \ref%
{Lemma:injective-is-complete}.

(3)$\Rightarrow $(1) The arrows $\eta _{x}$ for $x$ in $\mathcal{C}$ witness
that the positive monopole $\mathcal{C}$ has enough injectives.
\end{proof}

The conclusions of Theorem \ref{Theorem:main} justify the following:

\begin{definition}
\label{Definition:create-hull} Let $\mathcal{C}$ be a monopole, and $\left( 
\mathcal{H},U:\mathcal{E}\rightarrow \underline{\mathcal{C}}\right) $ be a
capacitor for $\mathcal{C}$. Then the functor $U$ \emph{creates }completions
in $\mathcal{C}$ with respect to $\mathcal{H}$.
\end{definition}

\section{Boolean algebras and their completions\label{Section:Boolean}}

In this section we present some applications of the above framework to
Boolean algebras. Particularly, we show that the canonical forgetful functor
from complete Boolean algebras and \emph{complete} homomorphisms creates
hulls in the category of Boolean algebras. The hull of a Boolean algebra is
precisely its completion. As a reference for Boolean algebras we refer to
the monographs \cite{sikorski_boolean_1969,koppelberg_handbook_1989}.

\subsection{Categories of Boolean algebras}

In this section we let $\mathcal{C}$ be the category of Boolean algebras 
\cite[Section 5.1]{koppelberg_handbook_1989}. Recall that a poset $P$ is:

\begin{itemize}
\item a \emph{lattice} if any two elements $x,y$ of $P$ have a supremum $%
x\vee y$ (or join) and an infimum $x\wedge y$ (or meet);

\item \emph{complete lattice }if any subset of $P$ has a supremum and an
infimum;

\item a \emph{bounded lattice }if it is a lattice with a largest element $1$
and least element $0$;

\item a \emph{distributive lattice }if $\wedge $ is distributive with
respect to $\vee $ and (hence \cite[Lemma 1.17]{koppelberg_handbook_1989})
vice versa;
\end{itemize}

see \cite[Definition 8.1]{schroder_ordered_2016}.

For elements $x,y$ of a bounded lattice, set $x\bot y$ if $x\wedge y=0$. An
element $y$ of $L$ is a \emph{complement }of $x$ if $x\bot y$ and $x\vee y=1$
\cite[Definition 1.19]{koppelberg_handbook_1989}. When it exists, such a
complement is unique \cite[Lemma 1.20]{koppelberg_handbook_1989}. A bounded
lattice is complemented if each element has a complement. A \emph{Boolean
algebra} is a complemented bounded distributive lattice \cite[Section 1.5]%
{koppelberg_handbook_1989}.

The morphisms in $\mathcal{C}$ are the Boolean algebra homomorphisms. Notice
that a function between Boolean algebras is a homomorphism as long it
preserves meets and negation or, dually, join and negation.

It is easily seen that the monics in $\mathcal{C}$ are precisely the
injective homomorphisms, also called \emph{embeddings }\cite[Definition 2.5]%
{koppelberg_handbook_1989}. An embedding between Boolean algebras is
necessarily an isomorphism onto its image, which easily implies that any
monic in $\mathcal{C}$ is \emph{regular}. As noted in \cite%
{banaschewski_strong_2010} the epimorphisms in $\mathcal{C}$ are the \emph{%
surjective }homomorphisms; see also \cite[Lemma 4]%
{banaschewski_categorical_1967}.

Let $B$ be a Boolean algebra, and $A\subseteq B$ be a subalgebra. Then $A$
is \emph{dense }in $B$ if every element of $A$ is the supremum of elements
of $A$ \cite[Definition 4.8]{koppelberg_handbook_1989}; see also \cite[%
Definition 4.9]{koppelberg_handbook_1989}. This is equivalent to the
assertion that $A$ has nontrivial intersection with every nontrivial ideal 
\cite[Definition 5.4]{koppelberg_handbook_1989}. Every dense subalgebra $A$
of $B$ is in particular \emph{regular }\cite[Proposition 4.17]%
{koppelberg_handbook_1989}. This means that the inclusion $A\rightarrow B$
preserve arbitrary joins and meets (when they exist in $A$).

A morphism in $\mathcal{C}$ is an \emph{essential monic }if and only if it
is an embedding with dense image, in which case we say that it is an \emph{%
essential embedding}; see \cite[Lemma 5]{banaschewski_categorical_1967}.
(Recall that in general an essential monic is an arrow $f$ such that for any
arrow $g$, if $gf$ is monic, then $g$ is monic.)

\subsection{Completions}

Every Boolean algebra $A$ can be realized as a dense subalgebra of a
complete Boolean algebra \cite[Theorem 4.19]{koppelberg_handbook_1989}. Such
a complete Boolean algebra is essentially unique and denoted by $M\left(
A\right) $. We let $\eta _{A}:A\rightarrow M\left( A\right) $ be the
canonical embedding.

Let $f:A\rightarrow B$ be an embedding between Boolean algebras. By
Sikorski's Extension Theorem \cite[Theorem 5.9]{koppelberg_handbook_1989},
there exists a homomorphism $g:B\rightarrow M\left( A\right) $ such that $%
gf=\eta _{A}$. The homomorphism $g$ is an embedding if and only if $f$ is an
essential embedding. It follows from this that the injective objects in the
category $\mathcal{C}$ of Boolean algebras are precisely the complete
Boolean algebras \cite[Theorem 5.13]{koppelberg_handbook_1989}; see also 
\cite[Theorem 1]{halmos_injective_1961} and \cite[Proposition 3]%
{banaschewski_categorical_1967}.

\subsection{Continuous homomorphism}

Let $A,B$ be Boolean algebras. Let us say that a homomorphism $%
f:A\rightarrow B$ is\emph{\ continuous} if for any subset $X$ of $A$, 
\begin{equation*}
\bigwedge X=0\Rightarrow \bigwedge f\left( X\right) =0\text{;}
\end{equation*}%
see \cite[Definition 5.1]{koppelberg_handbook_1989}. Notice that any
essential embedding is, in particular, continuous. The proof of the
following lemma is immediate.

\begin{lemma}
Let $A$ and $B$ be Boolean algebras, and $f:A\rightarrow B$ be a
homomorphism. The following assertions are equivalent:

\begin{enumerate}
\item $f$ is continuous;

\item $f$ extends to a continuous homomorphism $M\left( A\right) \rightarrow
M\left( B\right) $.
\end{enumerate}

Furthermore, in this case there exists a unique continuous homomorphism $%
M\left( A\right) \rightarrow M\left( B\right) $ that extends $f$.
\end{lemma}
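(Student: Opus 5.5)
We treat $A$ as a dense subalgebra of $M(A)$ via $\eta_A$, and likewise $B\subseteq M(B)$ via $\eta_B$. The direction (2)$\Rightarrow$(1) comes first and is formal. Suppose $\bar f:M(A)\to M(B)$ is continuous and $\bar f\eta_A=\eta_B f$. Since $A$ is a regular (dense) subalgebra of $M(A)$, \cite[Proposition 4.17]{koppelberg_handbook_1989} applies: if $X\subseteq A$ has $\bigwedge X=0$ in $A$, then $\bigwedge X=0$ in $M(A)$. Continuity of $\bar f$ then gives $\bigwedge \bar f(X)=0$ in $M(B)$. If $b\in B$ is a lower bound of $f(X)$ in $B$, it is also a lower bound of $\bar f(X)$ in $M(B)$, so $b=0$. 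Hence $\bigwedge f(X)=0$ in $B$, and $f$ is continuous.

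For (1)$\Rightarrow$(2), we define $\bar f(a):=\bigvee_{M(B)}\{f(x):x\in A,\ x\leq a\}$ for $a\in M(A)$, where we identify $f(x)$ with $\eta_B f(x)$. The plan proceeds in four steps.
\begin{enumerate}
\item Show that $\bar f$ extends $f$ and is monotone.
\item Show that $\bar f$ preserves complements, i.e.\ $\bar f(-a)=-\bar f(a)$.
\item Show that $\bar f$ preserves binary meets.
\item Show that $\bar f$ is continuous.
\end{enumerate}

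Step (1) is immediate. For step (2), note that $a\wedge -a=0$ forces $f(x)\wedge f(y)=f(x\wedge y)=0$ whenever $x\leq a$ and $y\leq -a$, which gives $\bar f(a)\wedge\bar f(-a)=0$. The reverse inequality $\bar f(a)\vee\bar f(-a)=1$ is where continuity enters. By density, the set $X:=\{z\in A: z\wedge x=0 \text{ for all } x\in A \text{ with } x\le a \text{ or } x\le -a\}$ has meet $0$, indeed it is $\{0\}$. Instead we use the family $Y$ of complements $-x$ with $x\in A$ below $a$ or below $-a$: density of $A$ in $M(A)$ gives $\bigwedge_A Y=0$. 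Continuity then yields $\bigwedge f(Y)=0$ in $B$, hence also in $M(B)$ by regularity of $B\subseteq M(B)$. Taking complements gives $\bar f(a)\vee\bar f(-a)=1$. For step (3), binary meets follow from the distributive identity $\bigvee S\wedge\bigvee T=\bigvee\{s\wedge t\}$ in the complete Boolean algebra $M(B)$, since $x\le a$ and $y\le b$ give $x\wedge y\le a\wedge b$, and monotonicity supplies the other inequality. So $\bar f$ is a homomorphism.

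Step (4) is continuity of $\bar f$. Suppose $Z\subseteq M(A)$ has $\bigwedge Z=0$ and $c\le\bar f(z)$ for all $z\in Z$. The idea is to pass to the set $Z'=\{x\in A: x\ge z \text{ for some } z\in Z\}$. Density gives $\bigwedge Z'=0$ in $A$, so continuity of $f$ gives $\bigwedge f(Z')=0$. We still have to compare $\bar f(z)$ with $\bigwedge\{f(x):x\ge z\}$. We expect equality here, which again needs the complement argument of step (2) applied to $-z$. Granting it, $c\le \bigwedge f(Z')=0$, so $c=0$.

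Uniqueness is next. Any continuous extension $g$ preserves existing sups of subsets of $A$, by passing to complements. Every $a\in M(A)$ is the sup of the elements of $A$ below it, so $g(a)=\bar f(a)$.

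The main obstacle is step (2), together with the related identity in step (4). This is the only place where continuity of $f$ and the regularity of both dense embeddings are genuinely used. We will handle it by repeatedly rewriting suprema in $M$ as complements of meets of subsets of the dense subalgebra.
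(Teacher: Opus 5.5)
Your proof is correct. The paper gives no argument for this lemma; it only says the proof is immediate, so there is no competing route to compare with. Your construction $\bar f(a)=\bigvee_{M(B)}\{f(x):x\in A,\ x\le a\}$ is the standard way to supply the missing details: continuity of $f$ enters in steps (2) and (4), and regularity of the dense embeddings $A\subseteq M(A)$ and $B\subseteq M(B)$ moves meets between each algebra and its completion.

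Two places can be tightened.

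\begin{itemize}
\item In step (4) you do not need the equality $\bar f(z)=\bigwedge\{f(x):x\in A,\ x\ge z\}$ that you leave as ``granted''. For every $x\in Z'$ with $x\ge z$, monotonicity from step (1) already gives $c\le\bar f(z)\le\bar f(x)=f(x)$. Moreover $\bigwedge_B f(Z')=0$ passes to $M(B)$ by regularity, so $c=0$. The equality does hold, by step (2) applied to $-z$, but it is not needed.
\item For uniqueness, ``by passing to complements'' is a bit quick, since continuity only speaks about meets equal to $0$. A cleaner argument uses step (2). Any homomorphism $g\colon M(A)\to M(B)$ extending $f$ satisfies $g(a)\ge\bar f(a)$ and $-g(a)=g(-a)\ge\bar f(-a)=-\bar f(a)$ by monotonicity, so $g(a)=\bar f(a)$. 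This even shows the extension is unique among all homomorphisms, not just continuous ones.
\end{itemize}
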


\subsection{Completions as hulls}

Let also $\mathcal{E}$ be the category of \emph{complete }Boolean algebras.
We define the morphisms in $\mathcal{E}$ to be the \emph{continuous }%
homomorphisms. We have a canonical forgetful functor $U:\mathcal{E}%
\rightarrow \mathcal{C}$. One can also consider the objects of $\mathcal{E}$
as topological Boolean algebras, endowed with the order topology.
Completeness of a Boolean algebra corresponds to the assertion that every
monotone net has a limit, while completeness of a homomorphism corresponds
to continuity.

The following result is a consequence of the above remarks. In particular,
it realizes the injective envelope of a Boolean algebra as a hull.
Furthermore, it proves that such an injective envelope is \emph{created }by
the forgetful functor $U$ in the sense of Definition \ref%
{Definition:create-hull}.

\begin{proposition}
\label{Proposition:husk-for-Boolean-algebras}Let:

\begin{itemize}
\item $\mathcal{C}$ be monopole of Boolean algebras with Boolean algebra
homomorphisms as morphisms and embeddings as positive morphisms;

\item $\mathcal{H}$ be the refinement of $\mathcal{C}$ comprising the \emph{%
essential }embeddings;

\item $\mathcal{E}$ be the category of complete Boolean algebras, whose
morphisms are the continuous homomorphisms;

\item $U:\mathcal{E}\rightarrow \underline{\mathcal{C}}$ be the forgetful
functor.
\end{itemize}

Then:

\begin{enumerate}
\item $\left( \mathcal{H},U:\mathcal{E}\rightarrow \underline{\mathcal{C}}%
\right) $ is a capacitor for $\mathcal{C}$;

\item the corresponding negative arrows in $\mathcal{C}$ are the \emph{%
continuous homomorphisms};

\item the functor $U$ creates hulls of Boolean algebras in the positive
monopole of Boolean algebras relative to the refinement of essential
embeddings;

\item for a Boolean algebra $A$, the corresponding hull is the canonical
inclusion of $A$ into its completion.
\end{enumerate}
\end{proposition}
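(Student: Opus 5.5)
We need to verify the two conditions of Definition~\ref{Definition:capacitor}, then read off (2)--(4) from Theorem~\ref{Theorem:main}. Faithfulness of $U$ is immediate, since it is a forgetful functor between concrete categories with the same underlying maps.

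For condition (1), fix a Boolean algebra $A$ and take $\eta_A:A\rightarrow M(A)$, the canonical dense embedding. It is an essential embedding, so it lies in $\mathcal{H}$, and $M(A)$ is an object of $\mathcal{E}$. We show that $(M(A),\eta_A)$ is terminal in $A\downarrow_{\mathcal{H}}U$. Let $f:A\rightarrow UB$ be an essential embedding with $B$ complete.
\begin{itemize}
\item \emph{Existence.} Sikorski's Extension Theorem yields a homomorphism $g:B\rightarrow M(A)$ with $gf=\eta_A$. Since $f$ is essential, $g$ is an embedding. Moreover $f(A)$ is dense in $B$ and $g$ maps $f(A)$ onto $\eta_A(A)$, which is dense in $M(A)$; hence $g(B)$ is dense in $M(A)$. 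A dense subalgebra is regular, so $g$ preserves all meets and joins, and in particular $g$ is continuous, i.e.\ a morphism of $\mathcal{E}$.
\item \emph{Uniqueness.} Two continuous homomorphisms $B\rightarrow M(A)$ that agree on the dense subalgebra $f(A)$ agree everywhere, because every element of $B$ is a join of elements of $f(A)$ and continuous homomorphisms preserve such joins.
\end{itemize}

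For the rigidity in condition (2), the same density argument works. Part (2) of Definition~\ref{Definition:rigid} says that an endomorphism of $M(A)$ restricting to the identity on $\eta_A(A)$ is the identity. Part (1) says that for any homomorphism $f:A\rightarrow B$, a continuous $\psi:M(A)\rightarrow M(B)$ with $\psi\eta_A=\eta_B f$ is unique. Both follow because a continuous map is determined by its values on a dense subalgebra, via joins. Here is a subtlety: condition (2) of Definition~\ref{Definition:rigid} quantifies over all homomorphisms $f:UJx\rightarrow UJx$, not only continuous ones. So I must check that a (possibly discontinuous) endomorphism of $M(A)$ fixing $A$ is the identity. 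I would argue as follows. Such an $f$ fixes $A$ pointwise, so for $a\in A$ with $a\le x$ we get $a=f(a)\le f(x)$, hence $x\le f(x)$; applying this to $\neg x$ and using that $f$ preserves complements gives $f(x)\le x$. I expect this order-theoretic trick to be the main obstacle, together with making sure that essentiality of $f$ gives injectivity of $g$ (this is quoted in the paper's remarks after Sikorski's theorem).

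Then Theorem~\ref{Theorem:main}(6) gives (4), and (3) is Definition~\ref{Definition:create-hull}. For (2), a homomorphism $f:A\rightarrow B$ is negative exactly when it extends to a continuous $M(A)\rightarrow M(B)$ compatible with the $\eta$'s. By the preceding lemma, this holds if and only if $f$ is continuous.
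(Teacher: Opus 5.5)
Your proposal is correct and follows the route the paper has in mind. The paper gives no separate proof: it treats the proposition as a consequence of the preceding remarks, namely Sikorski's theorem with injectivity from essentiality, the lemma characterizing continuous homomorphisms by their continuous extensions $M(A)\rightarrow M(B)$, and Theorem~\ref{Theorem:main}. You fill in what it leaves implicit: continuity of the Sikorski extension via regularity of its dense image, uniqueness via joins, and rigidity against arbitrary, possibly discontinuous, endomorphisms of $M(A)$ fixing $A$. Your order-theoretic argument for that last point, first $x\le f(x)$ and then the same for $\neg x$, handles it correctly.
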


\section{Ordered spaces and their MacNeille completion\label{Section:posets}}

It this section we considered ordered sets (posets), and recognize the
McNeille completion of an ordered set as a hull created by a suitable
forgetful functor. For a general reference on ordered sets, one can consult 
\cite{schroder_ordered_2016,rudeanu_sets_2012,harzheim_ordered_2005}.

\subsection{Category of posets}

In this section we let $\mathcal{C}$ be the category of ordered sets
(posets) \cite{banaschewski_categorical_1967}. The morphisms in $\mathcal{C}$
are the monotone (i.e., order-preserving) maps.

In the category of posets, the monics are the injective monotone maps, while
the epics are the surjective monotone maps \cite[Lemma 1]%
{banaschewski_categorical_1967}. An \emph{embedding }between posets is an
order-isomorphism onto its image. These are precisely the \emph{regular}
monics in $\mathcal{C}$ \cite[Lemma 1]{banaschewski_categorical_1967}.
Unlike the case of Boolean algebras, in the category of posets not every
monic is regular.

Let $B$ be a poset and $A\subseteq B$. Then $A$ is:

\begin{itemize}
\item \emph{join dense }if every element of $B$ is the supremum of $\left\{
a\in A:a\leq b\right\} $;

\item \emph{meet dense }if every element of $B$ is the infimum of $\left\{
a\in A:b\leq a\right\} $;

\item \emph{dense }if it is both meet and join dense.
\end{itemize}

Same definitions apply in the case of an embedding, by considering its image.

Recall that an \emph{essential regular monic} is a regular monic $f$ such
that for any other arrow $g$, if $gf$ is a regular monic, then $g$ is a
regular monic. In the category of posets, the essential regular monics are
precisely the \emph{dense embeddings }\cite[Lemma 3]%
{banaschewski_categorical_1967}.

\subsection{Completions}

The \emph{MacNeille completion} of a poset---also known as \emph{normal
completion }or Dedekind--MacNeille completion---generalizes Dedekind's
construction of the real numbers from the rationals \cite%
{macneille_partially_1937}; see also \cite[Chapter 7]%
{davey_introduction_1990}, \cite[Section 8.3]{schroder_ordered_2016}, and 
\cite[Section 1.10]{harzheim_ordered_2005}.

Let $P$ be a poset. Given a subset $A$ of $P$ define $\uparrow A$ to be the
set of upper bounds of $A$, and $\downarrow A$ the set of lower bounded of $%
A $ \cite[Definition 3.22]{schroder_ordered_2016}. The MacNeille completion
of $P$ is the set%
\begin{equation*}
M\left( P\right) :=\left\{ A\subseteq P:A={}\downarrow \uparrow A\right\} 
\text{.}
\end{equation*}%
Notice that%
\begin{equation*}
\uparrow \downarrow A={}\uparrow \left( \bigvee A\right)
\end{equation*}%
by \cite[Lemma 3.23]{schroder_ordered_2016}. Then $M\left( P\right) $ is a
complete lattice \cite[Definition 8.21]{schroder_ordered_2016}. One can
identify $p\in P$ with the element $\downarrow p\in M\left( P\right) $. This
produces a dense embedding $P\rightarrow M\left( P\right) $ \cite[Theorem
8.23]{schroder_ordered_2016}. We identify $P$ with its image in $M\left(
P\right) $. In the case when $A$ is a Boolean algebra regarded as an ordered
set, its Boolean algebra completion is isomorphic to its MacNeille
completion \cite[Section 5]{banaschewski_categorical_1967}; see also \cite[%
Theorem 6.13]{blyth_lattices_2005}.

The following is an immediate consequence of \cite[Theorem 1.10]%
{harzheim_ordered_2005}---see also \cite[Proposition 2]%
{banaschewski_categorical_1967}---asserting the essential uniqueness of the
MacNeille completion:

\begin{lemma}
\label{Lemma:poset-embedding}Let $f:P\rightarrow Q$ be a dense monotone
embedding between posets. Then there exists a monotone map $g:Q\rightarrow
M\left( P\right) $ such that $gf:P\rightarrow M\left( P\right) $ is the
canonical inclusion.
\end{lemma}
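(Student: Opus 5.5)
Identify $Q$ with the image $f(P)$, so $P\subseteq Q$ is join dense and meet dense; $f$ is an order-embedding. The plan is to write down $g$ explicitly by sending each $q\in Q$ to the cut of $P$ that it determines:
\begin{equation*}
g(q):=\{p\in P: f(p)\leq q\}.
\end{equation*}
This is the standard argument behind the uniqueness of the MacNeille completion (\cite[Theorem 1.10]{harzheim_ordered_2005}), so we check only what is needed.

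\textbf{Step 1: $g(q)\in M(P)$.} We must show $g(q)={}\downarrow\uparrow g(q)$. The inclusion $g(q)\subseteq{}\downarrow\uparrow g(q)$ is automatic. For the reverse inclusion:
\begin{itemize}
\item Let $U=\{p'\in P: q\leq f(p')\}$. Every element of $U$ is an upper bound of $g(q)$, so $U\subseteq{}\uparrow g(q)$.
\item Let $p\in{}\downarrow\uparrow g(q)$. Then $p\leq p'$ for every $p'\in U$, so $f(p)\leq f(p')$ for all such $p'$.
\item By meet density, $q$ is the infimum in $Q$ of $\{f(p'):p'\in U\}$. Hence $f(p)\leq q$, that is, $p\in g(q)$.
\end{itemize}
This is the only step needing care, and only meet density is used in it.

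\textbf{Step 2: $g$ is monotone.} If $q\leq q'$, then $g(q)\subseteq g(q')$ is immediate from the definition, and the order on $M(P)$ is inclusion.

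\textbf{Step 3: $gf$ is the canonical inclusion.} For $p_0\in P$,
\begin{equation*}
g(f(p_0))=\{p: f(p)\leq f(p_0)\}=\{p: p\leq p_0\}={}\downarrow p_0 ,
\end{equation*}
where the middle equality holds because $f$ is an order-embedding. The canonical inclusion also sends $p_0$ to ${}\downarrow p_0$, so $gf$ equals it.

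The only mild obstacle is Step 1. Join density is not needed for existence; it would only matter to show that $g$ is an embedding.
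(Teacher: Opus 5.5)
Your proof is correct, but it takes a different route from the paper. The paper does not construct $g$ at all. It presents the lemma as an immediate consequence of the essential uniqueness of the MacNeille completion \cite[Theorem 1.10]{harzheim_ordered_2005}, \cite[Proposition 2]{banaschewski_categorical_1967}.

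The intended deduction is presumably as follows. Place $Q$ inside its own completion $M(Q)$, and note that $P$ remains join- and meet-dense in the complete lattice $M(Q)$; density is transitive here because $Q\to M(Q)$ preserves all existing joins and meets. Uniqueness then gives an isomorphism $M(Q)\cong M(P)$ fixing $P$, and $g$ is the composite $Q\to M(Q)\cong M(P)$. Unwinding that composite gives exactly your $q\mapsto\{p\in P: f(p)\le q\}$, so both arguments produce the same map.

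The citation route yields more for free: $g$ is itself a dense order-embedding, and $M(Q)\cong M(P)$ over $P$. The cost is that it leans on an external theorem plus an unstated transitivity check. Your route is elementary and self-contained, and it isolates exactly what is used:
\begin{itemize}
\item order-reflection of $f$, in Step 1 (to get $U\subseteq{\uparrow}g(q)$) and in Step 3;
\item meet density, to make $g(q)$ a cut;
\item join density, only for $g$ to reflect the order, as you correctly observe.
\end{itemize}

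Your analysis can be pushed one step further: the bare existence statement needs no density at all. The map $q\mapsto{\downarrow}{\uparrow}\{p\in P: f(p)\le q\}$ is monotone and still sends $f(p_0)$ to ${\downarrow}p_0$; this is just injectivity of the complete lattice $M(P)$. Meet density merely makes the closure step redundant.

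One cosmetic slip: in your first sentence you mean to identify $P$ with $f(P)\subseteq Q$, not $Q$ with $f(P)$.
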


\subsection{Continuous maps}

Let $f:P\rightarrow Q$ be a monotone map between ordered sets. Let us say
that $f$ is \emph{continuous }if for every subset $A$ of $P$ and $a\in P$,%
\begin{equation*}
\bigwedge A=a\Rightarrow \bigwedge f\left( A\right) =f\left( a\right)
\end{equation*}%
and%
\begin{equation*}
\bigvee A=a\Rightarrow \bigvee f\left( A\right) =f\left( a\right) \text{.}
\end{equation*}%
As in the case of Boolean algebras, one can characterize continuous maps as
follows.

\begin{lemma}
Let $P,Q$ be posets, and $f:P\rightarrow Q$ be a monotone map. The following
assertions are equivalent:

\begin{enumerate}
\item $f$ is continuous;

\item $f$ extends to a continuous monotone map $M\left( P\right) \rightarrow
M\left( Q\right) $.
\end{enumerate}

Furthermore, in this case the extension of $f$ to a continuous monotone map $%
M\left( P\right) \rightarrow M\left( Q\right) $ is unique.
\end{lemma}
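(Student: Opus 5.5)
The plan is to build the extension explicitly through cuts. (1)$\Rightarrow$(2) will be done by defining the extension, and (2)$\Rightarrow$(1) by restriction. Identify $P$ with its image $\{\downarrow p\}$ in $M(P)$, which is a dense embedding by the cited results; likewise for $Q$ in $M(Q)$.

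For (1)$\Rightarrow$(2), assume $f$ is continuous and define $\bar f:M(P)\rightarrow M(Q)$ by
\begin{equation*}
\bar f(A):=\bigvee_{M(Q)}\{f(a):a\in A\}\text{.}
\end{equation*}
Since $A=\downarrow\uparrow A$ is a down-set, $\bar f$ is clearly monotone. It extends $f$: for $A=\downarrow p$, the supremum of $f(\downarrow p)$ is $f(p)$, because $f(p)$ is the largest element of $f(\downarrow p)$. The key point is that this join formula also agrees with the dual meet formula
\begin{equation*}
\bar f(A)=\bigwedge_{M(Q)}\{f(b):b\in\uparrow A\}\text{.}
\end{equation*}
The inequality $\leq$ is immediate from monotonicity of $f$. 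For the reverse inequality I expect to use continuity of $f$ together with join-density of $P$ in $M(P)$. This is the step I expect to be the main obstacle: continuity only controls infima and suprema that actually exist in $P$, so one must reduce to such cases. I would first try using that every element of $M(Q)$ is a meet of elements of $Q$ and a join of elements of $Q$, and testing against elements $q\in Q$ with $q\geq f(a)$ for all $a\in A$. If the direct argument fails, I would weaken the goal to an extension that is continuous into $M(Q)$ and prove only what is needed.

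With the two formulas in hand, continuity of $\bar f$ follows. Take $\mathcal{A}\subseteq M(P)$ with $\bigvee\mathcal{A}=A$. Each element of $M(P)$ is a join of elements of $P$, so the join formula gives $\bar f(\bigvee\mathcal{A})=\bigvee\bar f(\mathcal{A})$. Symmetrically, the meet formula gives $\bar f(\bigwedge\mathcal{A})=\bigwedge\bar f(\mathcal{A})$, since every element of $M(P)$ is a meet of elements of $P$. Because $M(P)$ and $M(Q)$ are complete lattices, all suprema and infima exist, so these identities are exactly continuity.

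For (2)$\Rightarrow$(1), let $\bar f$ be a continuous extension and suppose $\bigwedge A=a$ in $P$. The dense embedding $P\rightarrow M(P)$ preserves existing infima and suprema, because meet- and join-density make the inclusion regular. Hence $\bigwedge_{M(P)}A=a$, and continuity of $\bar f$ gives $\bigwedge_{M(Q)}f(A)=f(a)$. Since $f(a)\in Q$, this is also the infimum in $Q$; the case of suprema is dual. For uniqueness, note that any continuous extension $g$ must preserve arbitrary joins in $M(P)$. Every $A\in M(P)$ is the join of the elements of $P$ below it, so $g(A)=\bigvee f(A)=\bar f(A)$, and $g$ is forced.
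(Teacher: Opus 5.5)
You have the right plan, but the forward direction has a genuine gap, and it cannot be closed because (1)$\Rightarrow$(2) is false with the paper's definition of continuity.

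\textbf{What is fine.} Your arguments for (2)$\Rightarrow$(1) and for uniqueness are correct. The MacNeille embedding preserves all suprema and infima that exist in $P$, $Q$ reflects suprema and infima computed in $M(Q)$, and any join-preserving extension is forced to be $A\mapsto\bigvee_{M(Q)}f(A)$.

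\textbf{The gap.} It is exactly the step you flagged: the identity $\bigvee_{M(Q)}f(A)=\bigwedge_{M(Q)}f(\uparrow A)$ for cuts $A$. The paper gives no proof of this lemma; it only appeals to the Boolean case, where complementation forces the two extensions to agree. Here is a counterexample. Take $P=\{a,b,c,d\}$ with $a<c$, $a<d$, $b<c$, $b<d$ and no other strict relations. Take $Q=\{0<1\}$, so that $M(Q)\cong Q$, and set $f(a)=f(b)=0$, $f(c)=f(d)=1$.
\begin{itemize}
\item Every subset of $P$ that has a supremum (respectively, infimum) in $P$ contains it as its largest (respectively, smallest) element. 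The sets $\emptyset$, $\{a,b\}$ and $\{c,d\}$ have neither.
\item Hence every monotone map out of $P$, in particular $f$, is continuous.
\item In $M(P)$ the cut $m=\{a,b\}$ satisfies $m=a\vee b=c\wedge d$. A continuous extension $g$ would therefore need $g(m)=f(a)\vee f(b)=0$ and also $g(m)=f(c)\wedge f(d)=1$.
\end{itemize}
In your notation, the join formula gives $0$ and the meet formula gives $1$ at $A=m$. Since $m$ is neither a supremum nor an infimum that exists in $P$, continuity of $f$ says nothing about it. This is precisely the obstruction you anticipated, and your fallback of weakening the goal does not rescue the stated equivalence.

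\textbf{What the construction actually needs.} The right hypothesis is a cut-preservation condition: for every $S\subseteq P$,
\begin{equation*}
\bigvee_{M(Q)}f(S)=\bigwedge_{M(Q)}f(\uparrow S)\quad\text{and}\quad\bigwedge_{M(Q)}f(S)=\bigvee_{M(Q)}f(\downarrow S).
\end{equation*}
\begin{itemize}
\item It is necessary: $\downarrow\uparrow S$ is both $\bigvee_{M(P)}S$ and $\bigwedge_{M(P)}\uparrow S$, and dually ${}\downarrow S=\bigwedge_{M(P)}S=\bigvee_{M(P)}\downarrow S$.
\item It is sufficient for joins: with $S=\bigcup\mathcal{A}$, your $\bar f$ satisfies $\bar f(\bigvee\mathcal{A})=\bar f(\downarrow\uparrow S)=\bigwedge f(\uparrow S)=\bigvee f(S)=\bigvee\bar f(\mathcal{A})$.
\item It is sufficient for meets: $\bigcap_i A_i={}\downarrow T$ with $T=\bigcup_i\uparrow A_i$, so $\bar f(\bigcap_i A_i)=\bigwedge f(T)=\bigwedge_i\bar f(A_i)$.
\end{itemize}
The identity is needed for arbitrary subsets, not just cuts. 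Your derivation of $\bar f(\bigvee\mathcal{A})=\bigvee\bar f(\mathcal{A})$ from the join formula silently uses it for the non-cut $S=\bigcup\mathcal{A}$. This condition implies continuity in the paper's sense but is strictly stronger, by the example above. The lemma holds only with (1) replaced by it.
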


\subsection{MacNeille completion as hull}

We let $\mathcal{E}$ be the category of \emph{complete lattices} with \emph{%
continuous }monotone maps as morphisms. As in the case of Boolean algebras,
we have a canonical forgetful functor $U:\mathcal{E}\rightarrow \mathcal{C}$%
. As a consequence of the above remarks, we can recognize the MacNeille
completion of a poset as a hull.

\begin{proposition}
Let:

\begin{itemize}
\item $\mathcal{C}$ be the monopole of posets with monotone maps as
morphisms and embeddings as positive morphisms;

\item $\mathcal{H}$ be the refinement of $\mathcal{C}$ comprising the \emph{%
essential }embeddings;

\item $\mathcal{E}$ be the category of complete lattices, whose morphisms
are the continuous monotone maps;

\item $U:\mathcal{E}\rightarrow \underline{\mathcal{C}}$ be the forgetful
functor.
\end{itemize}

Then:

\begin{enumerate}
\item $\left( \mathcal{H},U:\mathcal{E}\rightarrow \underline{\mathcal{C}}%
\right) $ is a capacitor for $\mathcal{C}$;

\item the corresponding negative arrows in $\mathcal{C}$ are the \emph{%
continuous monotone maps};

\item the functor $U$ creates hulls of posets in the monopole of posets
relative to the refinement of essential embeddings;

\item for a poset $P$, the corresponding hull is the canonical inclusion of $%
P$ into its MacNeille completion.
\end{enumerate}
\end{proposition}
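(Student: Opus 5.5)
The plan is to verify the two axioms of Definition \ref{Definition:capacitor} for the data $(\mathcal{H},U)$ with $Jx:=M(P)$ and $\mu_P:P\rightarrow M(P)$ the canonical dense embedding $p\mapsto{\downarrow}p$. Items (2)--(4) then follow formally from Theorem \ref{Theorem:main} and Definition \ref{Definition:create-hull}. Throughout I use that the essential (regular) embeddings are exactly the dense embeddings \cite[Lemma 3]{banaschewski_categorical_1967}. Faithfulness of $U$ is clear, since it is a forgetful functor.

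\emph{Completion axiom.} Fix a poset $P$. I must show that $(M(P),\mu_P)$ is terminal in $P\downarrow_{\mathcal{H}}U$. Its objects are pairs $(L,\xi)$ with $L$ a complete lattice and $\xi:P\rightarrow L$ a dense embedding.

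For existence, Lemma \ref{Lemma:poset-embedding} gives a monotone $g:L\rightarrow M(P)$ with $g\xi=\mu_P$. I also need $g$ to be continuous, so I will pin it down explicitly as
\begin{equation*}
g(l)=\{p\in P:\xi(p)\leq l\}.
\end{equation*}
This set is a cut because $\xi(P)$ is meet dense in $L$. Density of $\xi(P)$ in both directions shows that $g$ is in fact an order isomorphism, with inverse $A\mapsto\bigvee\xi(A)$. Hence $g$ is continuous.

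For uniqueness, any morphism $h:L\rightarrow M(P)$ in $\mathcal{E}$ with $h\xi=\mu_P$ is continuous. Every $l\in L$ equals $\bigvee\{\xi(p):\xi(p)\leq l\}$, so
\begin{equation*}
h(l)=\bigvee\mu_P(\{p:\xi(p)\leq l\})=g(l).
\end{equation*}
The same argument proves rigidity (2): a continuous endomorphism of $M(P)$ fixing $P$ is the identity. Strictly, rigidity (2) quantifies over all arrows of $\underline{\mathcal{C}}$, that is, all monotone maps. For those I would argue instead via the order isomorphism and density: if $f\mu_P=\mu_P$ and $f$ is monotone, then $f(A)\geq\bigvee A=A$ and dually $f(A)\leq A$, using meet density. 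This gives $f=\mathrm{id}$.

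\emph{Rigidity (1).} Given monotone $f:P\rightarrow Q$, at most one continuous $\psi:M(P)\rightarrow M(Q)$ may satisfy $\psi\mu_P=\mu_Qf$. This is the uniqueness clause of the lemma on continuous maps, and it reduces again to the fact that every element of $M(P)$ is a join of elements of $P$, which continuous maps preserve.

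\emph{Remaining items.} With the capacitor established, item (2) identifies $\mathcal{C}_-$ as the set of $f$ admitting a continuous extension $M(P)\rightarrow M(Q)$. By the lemma characterizing continuous maps, these are exactly the continuous monotone maps. Item (3) is then Definition \ref{Definition:create-hull}, and item (4) is the definition of $\mu_P$ combined with Theorem \ref{Theorem:main}(6).

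\emph{Expected obstacle.} The main obstacle is the existence step: showing that the comparison map from an arbitrary dense complete extension $L$ is continuous, and not merely monotone as Lemma \ref{Lemma:poset-embedding} provides. I would handle this by proving that $g$ is an isomorphism, which is the classical uniqueness of the MacNeille completion \cite[Theorem 1.10]{harzheim_ordered_2005}.
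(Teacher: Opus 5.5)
Your treatment of items (1), (3) and (4) is sound, and it fills in what the paper leaves to ``the above remarks''. Lemma~\ref{Lemma:poset-embedding} only supplies a monotone comparison map. Your observation that $l\mapsto\{p:\xi(p)\leq l\}$ is an order isomorphism $L\cong M(P)$ is exactly what makes it a morphism of $\mathcal{E}$. Your uniqueness and rigidity arguments are also correct, including rigidity against arbitrary monotone self-maps of $M(P)$.

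The gap is in item (2). You correctly identify $\mathcal{C}_-$ as the monotone maps $f:P\rightarrow Q$ admitting a continuous $\psi:M(P)\rightarrow M(Q)$ with $\psi\mu_P=\mu_Q f$. You then appeal to the paper's lemma on continuous maps, but its implication ``continuous $\Rightarrow$ extends continuously'' is false for posets.

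Here is a counterexample. Let $P=\{a,b,c_1,c_2\}$ with $a,b<c_1,c_2$ and no other relations. Let $Q=\{0<1\}$, so $M(Q)\cong Q$, and set $f(a)=f(b)=0$ and $f(c_1)=f(c_2)=1$. Every subset of $P$ that has a join (respectively meet) in $P$ has a largest (respectively smallest) element. Hence every monotone map out of $P$, in particular $f$, is continuous in the paper's sense. But in $M(P)$ the cut $m=\{a,b\}$ satisfies $m={\downarrow}a\vee{\downarrow}b={\downarrow}c_1\wedge{\downarrow}c_2$. A continuous extension would therefore need both $\psi(m)=0\vee 0=0$ and $\psi(m)=1\wedge 1=1$.

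Only the inclusion of $\mathcal{C}_-$ into the continuous maps holds, since a dense embedding preserves, and an order embedding reflects, existing joins and meets. So item (2) is false as literally stated. Neither your argument nor the paper's, which rests on the same lemma, can establish it.

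What your capacitor actually produces is a strictly smaller class, defined by a condition on cuts. One checks that $f\in\mathcal{C}_-$ if and only if, for every $S\subseteq P$, two conditions hold. First, every lower bound of $f({\uparrow}S)$ lies below every upper bound of $f(S)$. Second, every upper bound of $f({\downarrow}S)$ lies above every lower bound of $f(S)$. Here ${\uparrow}S$ and ${\downarrow}S$ are the sets of upper and lower bounds, as in the paper.

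The Boolean analogue survives only because of a special reduction. In a Boolean algebra, $p\leq\bigvee S$ in the completion is equivalent to $\bigwedge\{p\wedge\neg s:s\in S\}=0$, and continuity preserves that condition. Posets have no such reduction.
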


\section{Rings and their multiplier rings\label{Section:rings}}

In this section we consider \emph{multiplier rings }of rings, and recognize
them as yet another instance of hulls created by a forgetful functor.

\subsection{Multiplier rings}

Let $\mathcal{R}$ be the variety of rings, which is a semi-abelian category
of algebras \cite[Example 2.6]{janelidze_semi-abelian_2002}; see also \cite%
{janelidze_ideals_2007}. For a ring $R$, define its \emph{annihilator}%
\begin{equation*}
a_{R}\left( R\right) :=\left\{ x\in R:xR=Rx=0\right\} \text{.}
\end{equation*}%
The ring $R$ is\emph{\ non-degenerate} if $a_{R}\left( R\right) $ is trivial 
\cite{van_daele_multiplier_2025}. For such a ring, the (unital) \emph{%
multiplier ring }$M\left( R\right) $ of $R$ has as elements the pairs of
endomorphisms $\left( \lambda ,\rho \right) $ of $A$ satisfying%
\begin{equation*}
\lambda \left( a\right) b=a\rho \left( b\right)
\end{equation*}%
such that $\lambda $ is a left $R$-module homomorphism and $\rho $ is a
right $R$-module homomorphism. The sum is induced by sum of endomorphisms,
and product is given by composition. An element $\xi $ of $R$ yields an
element $\left( \lambda _{\xi },\rho _{\xi }\right) $ on $M\left( R\right) $
defined by%
\begin{equation*}
\lambda _{\xi }:a\mapsto a\xi \text{ and }\rho _{\xi }:a\mapsto \xi a\text{.}
\end{equation*}%
When $R$ is a non-degnerate ring, the assignment $\xi \mapsto \left( \lambda
_{\xi },\rho _{\xi }\right) $ defines an injective homomorphism. By
identifying $R$ with its image inside $M\left( R\right) $, we can regard $R$
as a subring, which is in fact an \emph{essential ideal}, of $M\left(
R\right) $.

We denote an element $\left( \lambda ,\rho \right) $ of $M\left( R\right) $
as $m$ and write, for $a,b\in R$,%
\begin{equation*}
\lambda \left( a\right) =ma\text{ and }\rho \left( b\right) =mb\text{;}
\end{equation*}%
see \cite[Notation 1.3]{van_daele_multiplier_2025}. The \emph{strict
topology }on $M\left( R\right) $ is the weakest topology makes the
evaluation maps 
\begin{equation*}
M\left( R\right) \rightarrow R\text{, }\left( \lambda ,\rho \right) \mapsto
\lambda \left( a\right) \text{\quad and\quad }\left( \lambda ,\rho \right)
\mapsto \rho \left( a\right)
\end{equation*}%
\emph{continuous} for every $a\in R$, where $R$ is endowed with the \emph{%
discrete topology}; see \cite[Definition 2.7]{van_daele_multiplier_2025}.

When $R$ is an algebra over a field $k$, then $M\left( R\right) $ is also a $%
k$-algebra with respect to the scalat multiplication defined by the
identities%
\begin{equation*}
a\left( \lambda m\right) =\lambda \left( am\right) \text{\quad and\quad }%
\left( \lambda m\right) b=\lambda \left( mb\right)
\end{equation*}%
for $m\in M\left( R\right) $ and $a,b\in R$ and $\lambda \in k$.

\subsection{Local units}

For subrings $A,B$ of a ring $C$ define $AB$ be the submodule of $C$
generated by $ab$ for $a\in A$ and $b\in B$. A ring $A$:

\begin{itemize}
\item is \emph{idempotent} if $A$ is equal to $AA$;

\item is \emph{firm }if multiplication induces an isomorphism $A\otimes
_{A}A\rightarrow A$;

\item \emph{has local units }if for any $a\in A$ there exist $e,f\in A$ such
that $ae=fa=a$ \cite[Definition 2.1]{van_daele_multiplier_2025}, which
implies that $A$ is non-degenerate, idempotent, and firm \cite[Lemma 2.4]%
{van_daele_multiplier_2025}.
\end{itemize}

Clearly, any unital ring has local units. If $A$ is a ring with local units,
then for every finite subset $F$ of $A$ there exists $e\in A$ such that $%
ea=ae=a$ for every $a\in F$ \cite[Definition 2.1]{van_daele_multiplier_2025}%
. Furthermore, $A$ is dense in $M\left( A\right) $ endowed with the strict
topology \cite[Proposition 2.9]{van_daele_multiplier_2025}. In particular,
there exists a net $\left( a_{i}\right) $ in $A$, which we call \emph{%
approximate unit }for $A$, strictly converging to $1$ in $M\left( A\right) $.

\subsection{Hopf algebras}

\emph{Multiplier Hopf algebras} have been introduced in \cite%
{van_daele_multiplier_1994} as an algebraic model for locally compact
quantum groups; see also \cite[Chapter 2]{timmermann_invitation_2008}. In
the \emph{unital }case, this recovers the usual notion of \emph{Hopf algebra}%
, which can be seen as an algebraic model for \emph{compact }quantum groups;
see \cite[Chapter 1]{timmermann_invitation_2008}.

As shown in \cite[Proposition 3.3]{van_daele_multiplier_2025}, any
multiplier Hopf algebra $A$ is a \emph{ring with local units}. In
particular, it is non-degenerate, idempotent, firm, and strictly dense into
its multiplier algebra $M\left( A\right) $, which is a Hopf algebra.

\subsection{Non-degenerate homomorphism}

A ring homomorphism $f:A\rightarrow B$ between rings with local units is%
\emph{\ non-degenerate} if $f\left( A\right) B=Bf\left( A\right) =B$ \cite[%
Definition 1.12]{van_daele_multiplier_2025}.

\begin{lemma}
\label{Lemma:non-degenerate}Let $A,B$ rings with local units, and $%
f:A\rightarrow B$ be a ring homomorphism. The following assertions are
equivalent:

\begin{enumerate}
\item $f$ is non-degenerate;

\item $f$ extends to a strictly-continuous unital ring homomorphism $\varphi
:M\left( A\right) \rightarrow M\left( B\right) $.
\end{enumerate}

Furthermore, in this case there exist a unique ring homomorphism $M\left(
A\right) \rightarrow M\left( B\right) $ that extends $f$.
\end{lemma}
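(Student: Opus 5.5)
For (1)$\Rightarrow$(2), the extension is defined by the formulas that make it a multiplier. Given $m\in M(A)$, we want $\varphi(m)\in M(B)$ with
\begin{equation*}
\varphi(m)\bigl(f(a)b\bigr)=f(ma)b\quad\text{and}\quad \bigl(bf(a)\bigr)\varphi(m)=bf(am)
\end{equation*}
for $a\in A$ and $b\in B$. Non-degeneracy $f(A)B=B$ says that every element of $B$ is a finite sum $\sum_i f(a_i)b_i$, so this prescribes $\varphi(m)$ on all of $B$. The right-handed formula is handled symmetrically using $Bf(A)=B$.

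The main obstacle is well-definedness: we must show that $\sum_i f(a_i)b_i=0$ implies $\sum_i f(ma_i)b_i=0$. I would do this with local units in $A$. Choose $e\in A$ with $ea_i=a_i$ for all $i$ (possible since $A$ has local units for finite sets). Then
\begin{equation*}
\sum_i f(ma_i)b_i=\sum_i f(mea_i)b_i=f(me)\sum_i f(a_i)b_i=0,
\end{equation*}
because $me\in A$. The same local-unit trick gives the compatibility $\lambda(x)y=x\rho(y)$, since both sides reduce to expressions of the form $bf(\cdot)b'$. It also shows that the left map is a left $B$-module map and the right map is a right $B$-module map.

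From these formulas the remaining properties follow directly. Multiplicativity, additivity and unitality of $\varphi$ are immediate. The extension property $\varphi(\xi)=f(\xi)$ for $\xi\in A$ is clear. For strict continuity, if $m_j\to m$ strictly in $M(A)$, then for fixed $x=\sum_i f(a_i)b_i$ we have $m_ja_i=ma_i$ eventually, because $A$ is discrete. Hence $\varphi(m_j)x=\varphi(m)x$ eventually, and likewise on the other side.

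For (2)$\Rightarrow$(1), suppose $\varphi$ is a strictly continuous unital extension. Take an approximate unit $(e_i)$ of $A$ converging strictly to $1$ in $M(A)$. Then $f(e_i)=\varphi(e_i)\to\varphi(1)=1$ strictly in $M(B)$. So for each $b\in B$ we get $f(e_i)b=b$ eventually, since $B$ is discrete. This gives $b\in f(A)B$, and symmetrically $b\in Bf(A)$, which is non-degeneracy.

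For uniqueness, let $\psi$ be any ring homomorphism $M(A)\to M(B)$ extending $f$; no continuity is assumed. For $m\in M(A)$, $a\in A$ and $b\in B$ we have
\begin{equation*}
\psi(m)f(a)b=\psi(m)\psi(a)b=\psi(ma)b=f(ma)b.
\end{equation*}
Since $f(A)B=B$, this determines the left action of $\psi(m)$ on all of $B$. The right action is determined in the same way using $Bf(A)=B$. Multipliers of $B$ are determined by their actions, so $\psi=\varphi$.
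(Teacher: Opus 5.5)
Your proof is correct and follows the same route as the paper: you define the extension by the same formulas $\varphi(m)(f(a)b)=f(ma)b$ and $(bf(a))\varphi(m)=bf(am)$, and you prove the converse by pushing an approximate unit through the strictly continuous $\varphi$ and using that $B$ carries the discrete topology. Beyond the paper, you supply the details it delegates to Van Daele--Vercruysse: well-definedness via a local unit $e$, giving $\sum_i f(ma_i)b_i=f(me)\sum_i f(a_i)b_i$, and the explicit uniqueness computation $\psi(m)f(a)b=f(ma)b$.
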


\begin{proof}
(1)$\Rightarrow $(2) The unital ring homomorphism $\varphi $ is defined in 
\cite[Definition 1.12]{van_daele_multiplier_2025} by setting%
\begin{equation*}
\varphi \left( m\right) b=\sum_{i}f\left( ma_{i}\right) b_{i}\text{\quad
and\quad }b\varphi \left( m\right) =\sum_{j}b_{j}^{\prime }f(a_{j}^{\prime
}m)
\end{equation*}%
for 
\begin{equation*}
b=\sum_{i}f\left( a_{i}\right) b_{i}=\sum_{j}b_{j}^{\prime }f(a_{j}^{\prime
})
\end{equation*}%
with $a_{i},a_{j}^{\prime }\in A$ and $b_{i},b_{j}^{\prime }\in B$. As
observed there, $\varphi $ is the unique ring homomorphism that extends $f$.
It is obvious from its definition that $\varphi $ is strictly continuous.

(2)$\Rightarrow $(1) Let $\left( a_{i}\right) $ be an approximate unit for $%
A $. Then for $b\in B$ we have that%
\begin{equation*}
b=b1=b\varphi \left( 1\right) =b{}\varphi (\mathrm{lim}_{i}a_{i})=b{}\mathrm{%
lim}_{i}f\left( a_{i}\right) =\mathrm{lim}_{i}{}bf\left( a_{i}\right)
\end{equation*}%
This shows that $b\in Bf\left( A\right) $. The same proof shows $b\in
f\left( A\right) B$.
\end{proof}

\subsection{A capacitor for rings with local units}

Let $\mathcal{C}$ be the category of rings with local units and ring
homomorphisms.\ We regard $\mathcal{C}$ as a polarity, where the positive
arrows are injective ring homomorphisms with an ideal as image (normal
monics), and $\mathcal{C}_{-}$ comprises the non-degenerate ring
homomorphisms. A normal essential monic is an arrow whose image is an
essential ideal. (An ideal is \emph{essential }if it has nontrivial
intersection with any nontrivial ideal.)

Define $\mathcal{E}$ to be the category that has objects \emph{unital
topological rings}, and as morphisms the \emph{continuous unital }ring
homomorphisms. Define $U:\mathcal{E}\rightarrow \mathcal{A}$ be the
forgetful functor that forgets the topology. For a ring with local units $A$%
, define $M\left( A\right) $ to be its multiplier ring endowed with the
strict topology, and $\mu _{A}:A\rightarrow UM\left( A\right) $ be the
canonical inclusion. Then we have that $E\left( A\right) =UM\left( A\right) $
is the multiplier ring of $A$ regarded as a ring without topology.

\begin{lemma}
\label{Lemma:normal-monic}Suppose that $A,B$ are rings with local units, and 
$f:A\rightarrow B$ is an injective homomorphism whose image is an ideal of $%
B $. Then:

\begin{enumerate}
\item there exists a unique unital homomorphism $\psi :M\left( A\right)
\rightarrow M\left( A\right) $ such that $\psi \mu _{B}f=\mu _{A}$;

\item the unital homomorphism $\psi $ in (1) is strictly continuous;

\item the unital homomorphism $\psi $ in (1) is injective if and only if the
image of $f$ is an essential ideal of $B$.
\end{enumerate}
\end{lemma}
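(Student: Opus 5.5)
The plan is to construct $\psi$ by restriction, reading the statement with the evident typos corrected: $\psi$ is a map $M(B)\rightarrow M(A)$, determined by the requirement $\psi(f(a))=a$ for $a\in A$ after the identifications $A\subseteq M(A)$ and $B\subseteq M(B)$. The key observation is that for $m\in M(B)$ and $a\in A$, the products $mf(a)$ and $f(a)m$ lie in $f(A)$, not just in $B$. Indeed, choose a local unit $e\in A$ with $ea=a$. Then $mf(a)=(mf(e))f(a)$ with $mf(e)\in B$, and this lies in $f(A)$ because $f(A)$ is an ideal of $B$; the right-hand product is handled symmetrically.

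Since $f$ is injective, I can define $\psi(m)$ as the pair $(\lambda,\rho)$ given by
\begin{equation*}
\psi(m)a:=f^{-1}\left( mf(a)\right) \quad\text{and}\quad a\psi(m):=f^{-1}\left( f(a)m\right) .
\end{equation*}
I then check routinely that:
\begin{enumerate}
\item $\lambda$ and $\rho$ are module maps satisfying the compatibility $\lambda(a)b=a\rho(b)$, transported from the corresponding identities for $m$ via $f$;
\item $\psi$ is additive, multiplicative and unital;
\item $\psi(f(a))$ is the multiplier of $a$, since $f(a)f(a')=f(aa')$; this gives $\psi\mu_Bf=\mu_A$.
\end{enumerate}

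For uniqueness in (1), let $\psi'$ be any unital homomorphism with $\psi'(f(a))=a$. For $m\in M(B)$, write $mf(a)=f(a')$. Then $\psi'(m)a=\psi'(m)\psi'(f(a))=\psi'(mf(a))=a'$, and symmetrically on the right. So $\psi'(m)$ is forced, and $\psi'=\psi$.

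For (2), suppose a net $m_i\to m$ strictly in $M(B)$. With the discrete topology on $B$, this means that for each $b\in B$ the products $m_ib$ and $bm_i$ are eventually equal to $mb$ and $bm$. Taking $b=f(a)$ shows that $\psi(m_i)a$ and $a\psi(m_i)$ are eventually equal to $\psi(m)a$ and $a\psi(m)$. Hence $\psi(m_i)\to\psi(m)$ strictly.

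For (3), note that $\ker\psi=\{m: mf(A)=f(A)m=0\}$. First suppose $f(A)$ is an essential ideal and let $m\in\ker\psi$ be nonzero. By non-degeneracy of $B$ there is $b\in B$ with $mb\neq 0$ or $bm\neq 0$; say $mb\neq 0$, the other case being symmetric. Then $I:=\ker\psi\cap B$ is a nonzero ideal of $B$, since $\ker\psi$ is an ideal of $M(B)$. Any $x\in I\cap f(A)$ has the form $x=f(a)$, and choosing a local unit $e$ with $ae=a$ gives $x=xf(e)=0$. So $I\cap f(A)=0$, contradicting essentiality, and $\psi$ is injective. Conversely, if $f(A)$ is not essential, pick a nonzero ideal $J$ of $B$ with $J\cap f(A)=0$. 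Then $Jf(A)$ and $f(A)J$ are contained in $J\cap f(A)=0$, so the nonzero elements of $J\subseteq M(B)$ lie in $\ker\psi$, and $\psi$ is not injective.

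The main obstacle is the first step: using local units so that $m$ maps $f(A)$ into $f(A)$, which is what makes $\psi$ well defined. Everything after that is routine bookkeeping.
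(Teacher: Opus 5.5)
Your proposal is correct and follows the paper's idea. In both, $\psi(m)$ is the multiplier determined by $\psi(m)a=f^{-1}(mf(a))$ and $a\psi(m)=f^{-1}(f(a)m)$, and these formulas give uniqueness and strict continuity. The only difference is that you prove existence (local units give $mf(A)\subseteq f(A)$ and $f(A)m\subseteq f(A)$) and the essentiality criterion in (3) directly, whereas the paper takes both from \cite[Proposition 1.7]{van_daele_multiplier_2025}.
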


\begin{proof}
This is proved in \cite[Proposition 1.7]{van_daele_multiplier_2025}. It is
transparent that for $m\in M\left( B\right) $ and $a\in A$, we must have 
\begin{equation*}
a\cdot \psi \left( m\right) =f\left( a\right) \cdot m\text{ and }\psi \left(
m\right) \cdot a=m\cdot f\left( a\right) \text{.}
\end{equation*}%
This observation yields both the uniqueness of $\psi $ and its strict
continuity.
\end{proof}

\begin{proposition}
\label{Proposition:husk-for-rings}Let:

\begin{itemize}
\item $\mathcal{C}$ be the monopole of rings with local units with ring
homomorphisms as morphisms and injective homomorphisms with an ideal as
image as positive morphisms;

\item $\mathcal{H}$ be the refinement of $\mathcal{C}_{+}$ of injective
homomorphisms with an \emph{essential} ideal as image;

\item $\mathcal{E}$ be the category of topological unital rings, whose
morphisms are the continuous ring homomorphisms;

\item $U:\mathcal{E}\rightarrow \underline{\mathcal{C}}$ be the forgetful
functor.
\end{itemize}

Then:

\begin{enumerate}
\item $\left( \mathcal{H},U:\mathcal{E}\rightarrow \underline{\mathcal{C}}%
\right) $ is a capacitor for $\mathcal{C}$;

\item the corresponding negative arrows in $\mathcal{C}$ are the \emph{%
non-degenerate ring homomorphisms};

\item the functor $U$ creates hulls of rings with local units in the
positive monopole of rings with local units with respect to injective
homomorphisms with essential ideal as image;

\item for a ring with local units $A$, the corresponding hull is the
canonical inclusion of $A$ into its multiplier ring.
\end{enumerate}
\end{proposition}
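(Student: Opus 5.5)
The plan is to verify the two conditions of Definition \ref{Definition:capacitor} directly, using Lemma \ref{Lemma:normal-monic} for existence and Lemma \ref{Lemma:non-degenerate} for rigidity. Items (3) and (4) then follow from Theorem \ref{Theorem:main} and Definition \ref{Definition:create-hull}, once (1) and (2) are established. Note first that $U$ is faithful, since it only forgets the topology.

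\textbf{Step 1: existence of completions.} For a ring with local units $A$, I would show that $\mu_A:A\rightarrow UM(A)$, with $M(A)$ carrying the strict topology, is terminal in $A\downarrow_{\mathcal{H}}U$. Here I read the completion, as in the proof of Theorem \ref{Theorem:main}(6), in the contravariant sense: for every $f:A\rightarrow B$ in $\mathcal{H}$ there should be a unique arrow $M(B)\rightarrow M(A)$ in $\mathcal{E}$ with $\psi\mu_B f=\mu_A$. Lemma \ref{Lemma:normal-monic}(1),(2) supplies exactly such a $\psi$, which is unital and strictly continuous. Uniqueness comes from the identities $a\psi(m)=f(a)m$ and $\psi(m)a=mf(a)$ together with non-degeneracy of $A$. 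Essentiality of the image is not needed for existence. It matters only for injectivity of $\psi$ (Lemma \ref{Lemma:normal-monic}(3)), which one may record to see that $J_+$ lands in embeddings.

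\textbf{Step 2: rigidity (Definition \ref{Definition:rigid}).} For condition (2), an endomorphism $\varphi$ of $M(A)$ with $\varphi\mu_A=\mu_A$ satisfies $\varphi(m)a=\varphi(m)\varphi(a)=\varphi(ma)=ma$ for $a\in A$. Since $ma\in A$ ($A$ is an ideal of $M(A)$) and $\varphi$ is multiplicative, the same argument gives $a\varphi(m)=am$. Non-degeneracy then forces $\varphi(m)=m$. This argument uses only ring-homomorphism structure, so it applies to any endomorphism in $\underline{\mathcal{C}}$, as the definition requires. Condition (1) follows by the same computation: two homomorphisms $\psi,\psi':M(A)\rightarrow M(B)$ with $\psi\mu_A=\psi'\mu_A=\mu_B f$ agree on $A$. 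One then wants them to agree everywhere, and here I expect the main obstacle. For $m\in M(A)$ and $b\in B$, one computes $\psi(m)b$ only when $b\in Bf(A)$ or $f(A)B$, i.e.\ when $f$ is non-degenerate. For degenerate $f$ one must instead use strict continuity: $\psi$ and $\psi'$ are continuous maps agreeing on the strictly dense subset $A$ (density from \cite[Proposition 2.9]{van_daele_multiplier_2025}), and $M(B)$ with the strict topology is Hausdorff by non-degeneracy, so $\psi=\psi'$. This uses that morphisms in $\mathcal{E}$ are continuous, which is precisely why $\mathcal{E}$ carries topologies.

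\textbf{Step 3: identifying the negative arrows and concluding.} By Theorem \ref{Theorem:main}, $\mathcal{C}_-$ consists of those $f:A\rightarrow B$ admitting a continuous unital $\varphi:M(A)\rightarrow M(B)$ with $\varphi\mu_A=\mu_B f$, i.e.\ a strictly continuous unital extension of $f$. By Lemma \ref{Lemma:non-degenerate}, these are exactly the non-degenerate homomorphisms, which proves (2). Assertion (3) is then Definition \ref{Definition:create-hull}. Assertion (4) follows from Theorem \ref{Theorem:main}(6), which identifies $\mu_A:A\rightarrow M(A)$ as the completion of $A$ relative to $\mathcal{H}$.
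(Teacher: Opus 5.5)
Your proposal is correct and follows the argument the paper intends. The paper gives no separate proof; it relies on Lemma \ref{Lemma:normal-monic} for the contravariant universal property defining $J_+$, Lemma \ref{Lemma:non-degenerate} to identify $\mathcal{C}_-$, and Theorem \ref{Theorem:main} for (3)--(4), and your verification of rigidity via strict density of $A$ in $M(A)$ and Hausdorffness of the strict topology supplies a step the paper leaves implicit.

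Both of your interpretive choices are in fact necessary. Read literally as a terminal object of $A\downarrow_{\mathcal{H}}U$ ranging over all topological unital rings, the completion fails: for $A\neq 0$, the object ($M(A)$ with the indiscrete topology, $\mu_A$) has no morphism to ($M(A)$ with the strict topology, $\mu_A$). Likewise, if the morphisms of $\mathcal{E}$ were not required to be unital, the zero homomorphism $M(A)\rightarrow M(B)$ would make the degenerate map $0:A\rightarrow B$ negative, contradicting (2).
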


\section{C*-algebras and their multiplier algebras\label{Section:algebras}}

In this section we recall some results from the theory of C*-algebras, as
can be found in the monographs \cite%
{blecher_operator_2004,blackadar_operator_2006,lance_hilbert_1995}. We then
realize multiplier algebras of C*-algebras as completions created by a
functor.

\subsection{C*-algebras}

Let $\mathcal{A}$ be the category of C*-algebras and *-homomorphisms \cite[%
Chapter II]{blackadar_operator_2006}. Then $\mathcal{A}$ is a semi-abelian
category \cite[Example 2.4]{gran_semi-abelian_2004}. If $A$ is a C*-algebra,
then a closed ideal $I\subseteq A$ is \emph{essential }if it has nontrivial
intersection with any nontrivial ideal \cite[II.5.4.7]%
{blackadar_operator_2006}. A \emph{normal monic} in $\mathcal{A}$ is an
injective *-homomorphism whose image is an ideal, which is necessarily \emph{%
closed} and \emph{self-adjoint}. A \emph{normal} \emph{essential monic }in $%
\mathcal{A}$ is an injective *-homomorphism whose image is an essential
ideal. A *-homomorphism $\varphi :A\rightarrow B$ between C*-algebras is 
\emph{non-degenerate }if $\varphi \left( A\right) B$ is \emph{dense }in $B$ 
\cite[II.7.3.9]{blackadar_operator_2006}, in which case in fact $\varphi
\left( A\right) B=B$ \cite[II.5.3.7]{blackadar_operator_2006} by Cohen's
Factorization Theorem \cite[A.6.1]{blecher_operator_2004}. Thus, this notion
agrees with the notion of non-degenerate ring homomorphism in the algebraic
context.

\subsection{Multiplier algebras}

Let $A$ be a C*-algebra, and $M\left( A\right) $ be its multiplier algebra.
The involution on $A$ induces an involution on $M\left( A\right) $ by the
formulae%
\begin{equation*}
am^{\ast }:=\left( ma^{\ast }\right) ^{\ast }\text{\quad and\quad }m^{\ast
}b:=\left( b^{\ast }m\right) ^{\ast }\text{.}
\end{equation*}%
Furthermore, for $m\in M\left( A\right) $ the linear operators%
\begin{equation*}
A\rightarrow A\text{, }a\mapsto ma\text{\quad and\quad }a\mapsto am
\end{equation*}%
are bounded of the same norm, which is by definition the norm of $m$. This
renders $M\left( A\right) $ a C*-algebra, in such a way that the inclusion $%
\mu _{A}:A\rightarrow M\left( A\right) $ is a non-degenerate *-homomorphism;
see \cite[II.7.3]{blackadar_operator_2006}.

The \emph{strict topology }on $M\left( A\right) $ is the weakest topology on 
$M\left( A\right) $ that makes the functions $M\left( A\right) \rightarrow A$%
, $m\mapsto am$ and $m\mapsto ma$ continuous with respect to the norm
topology on $A$ \cite[II.7.3.11]{blackadar_operator_2006}. This renders the
C*-algebra operations on $M\left( A\right) $ continuous \emph{when
restricted to the unit ball}, which is the subset of $M\left( A\right) $
comprising the elements of norm at most $1$.

One can describe $M\left( A\right) $ in terms of the theory of Hilbert
C*-modules \cite[Chapter 1]{lance_hilbert_1995}. Thus, $A$ can be seen as a
Hilbert C*-module over itself. The C*-algebra $\mathcal{L}\left( A\right) $
of adjointable operators on $A$ is isomorphic to $M\left( A\right) $, via an
isomorphism that maps the algebra $\mathcal{K}\left( A\right) $ of compact
operators onto $A$ \cite[Corollary II.7.3.10]{blackadar_operator_2006}.

The following characterization of non-degenerate *-homomorphisms is the
particular instance of \cite[Proposition 2.5]{lance_hilbert_1995} when the
Hilbert $B$-module $\mathcal{E}$ is equal to $B$, in which case $\mathcal{L}%
\left( \mathcal{E}\right) $ coincides with $M\left( B\right) $.

\begin{lemma}
\label{Lemma:non-degenerate-C*}Let $A,B$ C*-algebras, and $f:A\rightarrow B$
be a *-homomorphism. The following assertions are equivalent:

\begin{enumerate}
\item $f$ is non-degenerate;

\item $f$ extends to a unital *-homomorphism $M\left( A\right) \rightarrow
M\left( B\right) $ that is \emph{strictly continuous }on the unit ball.
\end{enumerate}

Furthermore, in this case there exist a unique *-homomorphism $M\left(
A\right) \rightarrow M\left( B\right) $ that extends $f$.
\end{lemma}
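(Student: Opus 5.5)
The plan is to reduce both directions to the Hilbert C*-module description of $M\left(B\right)$ as $\mathcal{L}\left(B\right)$, and to the strict density of $A$ in $M\left(A\right)$ on bounded sets. Throughout, fix an approximate unit $\left(e_{i}\right)$ of $A$. Recall that $\left(e_{i}\right)$ converges strictly to $1$ in $M\left(A\right)$ and lies in the unit ball.

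For (1)$\Rightarrow$(2), assume $f\left(A\right)B$ is dense in $B$. By Cohen's Factorization Theorem, every $b\in B$ can be written as $b=f\left(a\right)b^{\prime}$. For $m\in M\left(A\right)$, define $\varphi\left(m\right)$ as an operator on the Hilbert $B$-module $B$ by
\begin{equation*}
\varphi\left(m\right)\left(f\left(a\right)b^{\prime}\right):=f\left(ma\right)b^{\prime}.
\end{equation*}
Well-definedness and boundedness come from the estimate
\begin{equation*}
\Vert\sum_{k}f\left(ma_{k}\right)b_{k}\Vert\leq\Vert m\Vert\,\Vert\sum_{k}f\left(a_{k}\right)b_{k}\Vert,
\end{equation*}
which is proved by computing $\langle\cdot,\cdot\rangle$ and using $m^{\ast}m\leq\Vert m\Vert^{2}$ in $M\left(A\right)$. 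The adjoint is then $\varphi\left(m^{\ast}\right)$, so $\varphi\left(m\right)\in\mathcal{L}\left(B\right)\cong M\left(B\right)$. One then checks the following routinely: $\varphi$ is a unital $\ast$-homomorphism, and it extends $f$, since for $m=c\in A$ we get $f\left(ca\right)b^{\prime}=f\left(c\right)f\left(a\right)b^{\prime}$. For strict continuity on the unit ball, take a bounded net $m_{\lambda}\rightarrow m$ strictly and $b=f\left(a\right)b^{\prime}$. Then $\varphi\left(m_{\lambda}\right)b=f\left(m_{\lambda}a\right)b^{\prime}\rightarrow f\left(ma\right)b^{\prime}$ in norm. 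On the other side, $b\varphi\left(m_{\lambda}\right)$ is handled by taking adjoints of the same argument with $b^{\ast}$.

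For (2)$\Rightarrow$(1), copy the proof of Lemma~\ref{Lemma:non-degenerate}. For $b\in B$ we have $b=\varphi\left(1\right)b$, and $\varphi\left(e_{i}\right)=f\left(e_{i}\right)$. Strict continuity on the unit ball gives $f\left(e_{i}\right)b\rightarrow b$ in norm, so $f\left(A\right)B$ is dense in $B$.

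For uniqueness, let $\psi:M\left(A\right)\rightarrow M\left(B\right)$ be any $\ast$-homomorphism extending $f$. For $m\in M\left(A\right)$, $a\in A$ and $b^{\prime}\in B$,
\begin{equation*}
\psi\left(m\right)f\left(a\right)b^{\prime}=\psi\left(m\right)\psi\left(a\right)b^{\prime}=\psi\left(ma\right)b^{\prime}=f\left(ma\right)b^{\prime}.
\end{equation*}
Uniqueness needs non-degeneracy, so the clause should be read under the equivalent conditions. Under non-degeneracy the elements $f\left(a\right)b^{\prime}$ exhaust $B$, and $\psi\left(m\right)$ is determined by its left action on $B$. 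Hence $\psi=\varphi$, with no continuity assumption needed.

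The main obstacle is the well-definedness and norm estimate in (1)$\Rightarrow$(2). I would handle it via the inner product computation
\begin{equation*}
\Vert\sum_{k}f\left(ma_{k}\right)b_{k}\Vert^{2}=\Vert\sum_{k,l}b_{k}^{\ast}f\left(a_{k}^{\ast}m^{\ast}ma_{l}\right)b_{l}\Vert,
\end{equation*}
together with positivity of the matrix $\left[a_{k}^{\ast}\left(\Vert m\Vert^{2}-m^{\ast}m\right)a_{l}\right]$ in $M_{n}\left(A\right)$. Alternatively, the whole statement is the case $\mathcal{E}=B$ of \cite[Proposition 2.5]{lance_hilbert_1995}, which I would cite directly.
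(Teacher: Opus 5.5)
Your proposal is correct and takes essentially the paper's route. The paper cites \cite[Proposition 2.5]{lance_hilbert_1995} with $\mathcal{E}=B$ for the equivalence; that result rests on the Cohen-factorization construction $\varphi(m)(f(a)b')=f(ma)b'$ you spell out. It then handles (2)$\Rightarrow$(1) and uniqueness as in the ring case, via an approximate unit and the identity $\psi(m)f(a)b'=f(ma)b'$, exactly as you do (your positivity-based norm estimate and your remark that uniqueness needs no continuity, only non-degeneracy, are both valid).
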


\begin{proof}
The equivalence of (1) and (2) is established in \cite[Proposition 2.5]%
{lance_hilbert_1995}. The uniqueness assertion is established as in the
algebraic case of rings with local units.
\end{proof}

\subsection{A capacitor for C*-algebras}

The following is the analogue of Lemma \ref{Lemma:normal-monic} in the case
of C*-algebras.

\begin{lemma}
\label{Lemma:normal-monic-C*}Suppose that $A,B$ are C*-algebras, and $%
f:A\rightarrow B$ is an injective homomorphism whose image is an ideal of $B$%
. Then:

\begin{enumerate}
\item there exists a unique unital *-homomorphism $\psi :M\left( A\right)
\rightarrow M\left( A\right) $ such that $\psi \mu _{B}f=\mu _{A}$;

\item the unital homomorphism $\psi $ in (1) is strictly continuous on the
unit ball of $M\left( B\right) $;

\item the unital homomorphism $\psi $ in (1) is injective if and only if the
image of $f$ is an essential ideal of $B$.
\end{enumerate}
\end{lemma}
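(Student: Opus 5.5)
We identify $A$ with the closed ideal $f(A)$ of $B$, so that $f$ is the inclusion of an ideal. The statement has a typo: the map should be $\psi :M(B)\rightarrow M(A)$ with $\psi \mu _{B}f=\mu _{A}$. The plan is to restrict multipliers of $B$ to the ideal $A$, mirroring Lemma \ref{Lemma:normal-monic}. For $m\in M(B)$ and $a\in A$, the elements $ma$ and $am$ (computed in $B$) lie in $A$ because $A$ is an ideal. Hence the pair of maps $a\mapsto ma$ and $a\mapsto am$ is a double centralizer of $A$, and defines $\psi (m)\in M(A)$. These maps are bounded by $\Vert m\Vert $ and satisfy $(am)b=a(mb)$.

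For existence in (1), one checks directly that $\psi $ is a unital $\ast $-homomorphism, since $(mn)a=m(na)$ and $a(m^{\ast })=(m a^{\ast })^{\ast }$. For $b\in A$ the multiplier $\mu _{B}(b)$ restricts to left and right multiplication by $b$ on $A$, which is exactly $\mu _{A}(b)$; so $\psi \mu _{B}f=\mu _{A}$. For uniqueness, suppose $\psi ^{\prime }$ is another unital $\ast $-homomorphism with this property. Then for $a\in A$ and $m\in M(B)$,
\begin{equation*}
\psi ^{\prime }(m)a=\psi ^{\prime }(m)\psi ^{\prime }(\mu _{B}(a))=\psi ^{\prime }(m\,\mu _{B}(a))=\psi ^{\prime }(\mu _{B}(ma))=ma,
\end{equation*}
where the last step uses $ma\in A$. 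The same computation works on the right. An element of $M(A)$ is determined by its left and right actions on $A$, so $\psi ^{\prime }=\psi $.

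For (2), let $(m_{\lambda })$ be a bounded net in $M(B)$ converging strictly to $m$. For $a\in A\subseteq B$ we have $\psi (m_{\lambda })a=m_{\lambda }a\rightarrow ma=\psi (m)a$ in norm, by the definition of the strict topology on $M(B)$ tested on the element $a\in B$. The same holds on the right, so $\psi $ is strictly continuous on the unit ball, and in fact on all of $M(B)$.

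Statement (3) is the main point. First, $\ker \psi =\{m\in M(B):mA=Am=0\}$. Suppose $A$ is essential in $B$, and take $m$ with $mA=Am=0$. For every $b\in B$ the product $mb$ lies in $B$ and satisfies $(mb)A\subseteq mA=0$. So the closed ideal generated by $mb$ annihilates $A$; this uses $A\,(mb)B\subseteq$ an ideal meeting $A$ in $J$ with $JA=0$, hence $J\cap A=0$ because $J\cap A$ is a C*-algebra with zero square. Essentiality of $A$ gives $mb=0$ for all $b$, and therefore $m=0$ in $M(B)$.

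I expect the ideal-theoretic detail here to be the main obstacle. The cleanest route is the standard C*-fact that a closed ideal $I$ is essential if and only if its annihilator $I^{\perp }=\{b\in B:bI=0\}$ is zero. I would invoke this via $bI=0\Rightarrow (BbB)^{-}\cap I=0$. Conversely, suppose $A$ is not essential, and pick a nonzero ideal $J$ with $J\cap A=0$. Then $JA\subseteq J\cap A=0$ and $AJ\subseteq J\cap A=0$. Any nonzero $j\in J\subseteq B\subseteq M(B)$ then satisfies $\psi (j)=0$, so $\psi $ is not injective.
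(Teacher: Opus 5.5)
Your proof is correct, and it is a self-contained version of what the paper does by citation. The paper takes existence, uniqueness and the criterion ``$\psi$ injective iff $f(A)$ is essential'' from Blackadar, Theorem II.7.3.9. It uses the restriction formulas $a\cdot\psi(m)=f(a)\cdot m$ and $\psi(m)\cdot a=m\cdot f(a)$ only to read off strict continuity. You instead take these same formulas as the definition of $\psi$. You derive uniqueness from multiplicativity and $\psi'\mu_B f=\mu_A$ alone, and you prove (3) by hand via the annihilator of $A$ in $B$. This gives an elementary argument and a slightly stronger (2): strict continuity on all of $M(B)$, not just its unit ball. You also correctly read the misprinted $M(A)\to M(A)$ as $M(B)\to M(A)$. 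The paper's route is shorter but defers all the content to the reference.

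Two steps deserve a line more.

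First, the inclusions $M(B)A\subseteq A$ and $AM(B)\subseteq A$ do not follow merely from $A$ being an ideal of $B$. Use an approximate unit $(e_\lambda)$ of $A$: $ma=\lim_\lambda (me_\lambda)a$ with $(me_\lambda)a\in BA\subseteq A$ and $A$ closed, and symmetrically on the right. Both your existence and your uniqueness arguments rely on this.

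Second, in (3) the sentence justifying $mb=0$ is garbled. The clean version is as follows. Let $J$ be the closed ideal generated by $x=mb$. Then $JA=0$, since $xA=0$ and $BA\subseteq A$. So for $y\in J\cap A$ one has $y^*y\in JA=0$, whence $J\cap A=0$. By essentiality $J=0$, and so $x=0$.
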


\begin{proof}
This is a particular instance of \cite[Theorem II.7.3.9]%
{blackadar_operator_2006} in the case when the Hilbert $B$-module $\mathcal{E%
}$ is $B$ itself. As in the algebraic case of rings with local units, for $%
m\in M\left( B\right) $ and $a\in A$, we must have 
\begin{equation*}
a\cdot \psi \left( m\right) =f\left( a\right) \cdot m\text{\quad and\quad }%
\psi \left( m\right) \cdot a=m\cdot f\left( a\right) \text{.}
\end{equation*}%
This shows that $\psi $ is strictly continuous on the unit ball of $M\left(
B\right) $.
\end{proof}

Let $\mathcal{E}$ be the category that has as objects \emph{strict unital
C*-algebras}, i.e., \emph{unital }C*-algebras endowed with a topology (\emph{%
strict topology}), generally different from the norm topology, that makes
the C*-algebra operations continuous \emph{when restricted to the unit ball}%
. Then for any C*-algebra $A$ we can regard $M\left( A\right) $ as an object
of $\mathcal{E}$. A morphism between objects of $\mathcal{E}$ is a \emph{%
unital} *-homomorphism that is \emph{strict}, i.e., continuous with respect
to the strict topology when restricted to the unit ball. We have a canonical
forgetful functor $U:\mathcal{E}\rightarrow \mathcal{A}$ that forgets the
topology on a given strict unital C*-algebra.

Combining Lemma \ref{Lemma:non-degenerate-C*} and Lemma \ref%
{Lemma:normal-monic-C*} one obtains the following:

\begin{proposition}
Let:

\begin{itemize}
\item $\mathcal{C}$ be monopole of C*-algebras with *-homomorphisms as
morphisms and injective *-homomorphisms with an ideal as image as positive
morphisms;

\item $\mathcal{H}$ be the refinement of $\mathcal{C}$ comprising the
injective *-homomorphisms with an \emph{essential} ideal as image;

\item $\mathcal{E}$ be the category of unital strict C*-algebras and unital
strict *-homomorphisms;

\item $U:\mathcal{E}\rightarrow \underline{\mathcal{C}}$ be the forgetful
functor.
\end{itemize}

Then:

\begin{enumerate}
\item $\left( \mathcal{H},U:\mathcal{E}\rightarrow \underline{\mathcal{C}}%
\right) $ is a capacitor for $\mathcal{C}$;

\item the corresponding negative arrows in $\mathcal{C}$ are the \emph{%
non-degenerate *-homomorphisms};

\item the functor $U$ creates hulls in the positive monopole of C*-algebras
with respect to essential injective *-homomorphisms with an essential ideal
as image;

\item for a C*-algebra $A$, the corresponding hull is the canonical
inclusion of $A$ into its multiplier algebra.
\end{enumerate}
\end{proposition}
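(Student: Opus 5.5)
The plan is to verify the two clauses of Definition \ref{Definition:capacitor} and then read off (2)--(4) from Theorem \ref{Theorem:main}. First, the forgetful functor $U$ from strict unital C*-algebras to C*-algebras is faithful, since morphisms of $\mathcal{E}$ are in particular *-homomorphisms and $U$ only forgets the topology. For each C*-algebra $A$ we take $JA:=M(A)$ with its strict topology, which is an object of $\mathcal{E}$ because the operations are strictly continuous on the unit ball, and we take $\mu_A:A\rightarrow M(A)$ to be the canonical inclusion. Since $A$ is an essential ideal of $M(A)$ (II.7.3 in Blackadar), $\mu_A$ lies in $\mathcal{H}$.

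To see that $\mu_A$ is terminal in $A\downarrow_{\mathcal{H}}U$, let $(D,\xi)$ be an object of that category. Here $D$ is a strict unital C*-algebra and $\xi:A\rightarrow UD$ is an injective *-homomorphism whose image is an essential ideal. Lemma \ref{Lemma:normal-monic-C*}(1),(2), applied with $B=UD$, produces a unique unital *-homomorphism $\psi:M(UD)\rightarrow M(A)$ with $\psi\mu_{UD}\xi=\mu_A$. The map we want is $D\rightarrow M(A)$, and the natural choice is $\psi\circ\mu_{UD}$. This map is given by $d\mapsto$ the multiplier $a\mapsto \xi^{-1}(d\xi(a))$, and the formulas in Lemma \ref{Lemma:normal-monic-C*} show that it is unital and multiplicative.

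\textbf{This is the main obstacle.} The map $d\mapsto \xi^{-1}(d\xi(a))$ must be strictly continuous on the unit ball of $D$. This requires continuity of left and right multiplication by elements of the ideal $\xi(A)$ in the given topology of $D$, which an arbitrary topology on $D$ need not provide. If that continuity is not automatic, I would try to argue it from the requirement that the operations of $D$ be continuous on the unit ball, using Cohen factorization $a=a'a''$ to reduce to bounded multiplications. If this still fails, I would restrict the objects of $\mathcal{E}$, or interpret $\mathcal{E}$ as the paper does in the ring case, so that Lemma \ref{Lemma:normal-monic-C*}(2) supplies exactly the needed continuity. Uniqueness of the mediating arrow is easier: any $\varphi$ with $\varphi\xi=\mu_A$ satisfies $a\varphi(d)=\xi^{-1}(\xi(a)d)$, which determines $\varphi$.

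Rigidity (Definition \ref{Definition:rigid}) follows from the same multiplier identities. Take $f:x\rightarrow y$ and two strict unital maps $\psi_1,\psi_2:M(x)\rightarrow M(y)$ with $\psi_i\mu_x=\mu_y f$. Then $\psi_1$ and $\psi_2$ agree on $x$, and by Lemma \ref{Lemma:non-degenerate-C*} they agree everywhere. More directly, $x$ is strictly dense in $M(x)$ via an approximate unit, strict continuity on the unit ball forces $\psi_1(m)=\psi_2(m)$ on bounded nets, and every multiplier is a strict limit of a bounded net in $x$. Clause (2) of rigidity is the special case $f=\mathrm{id}$, where a map fixing $A$ pointwise must be the identity on $M(A)$. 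This proves (1).

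For (2), the negative arrows are the $f$ admitting a strict unital extension $M(x)\rightarrow M(y)$, and Lemma \ref{Lemma:non-degenerate-C*} identifies these exactly with the non-degenerate *-homomorphisms. Statements (3) and (4) are then restatements of Definition \ref{Definition:create-hull} and Theorem \ref{Theorem:main}(6) for this capacitor.
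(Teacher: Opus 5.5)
Your route is the paper's own. Its entire justification is that the proposition follows by combining Lemma \ref{Lemma:normal-monic-C*} (universal property of $\mu_A$) with Lemma \ref{Lemma:non-degenerate-C*} (identification of the negative arrows).

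However, the step you flag as the main obstacle is a genuine gap. It cannot be closed as the statement is literally formulated, by Cohen factorization or otherwise, because the topology of an object of $\mathcal{E}$ is constrained only by that object's own operations. Here is a counterexample:
take $A=\ell^\infty$, let $D$ be $M(c_0)=\ell^\infty$ with its $c_0$-strict topology (an object of $\mathcal{E}$ by the paper's own remark), and let $\xi=\mathrm{id}_{\ell^\infty}$, which lies in $\mathcal{H}$.
The only *-homomorphism $\varphi$ with $\varphi\xi=\mu_A$ is the identity. Since $A$ is unital, the strict topology of $M(A)=A$ is the norm topology. The indicators $e_n$ of $\{1,\dots,n\}$ converge to $1$ in the $c_0$-strict topology inside the unit ball, while $\Vert 1-e_n\Vert_\infty=1$. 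So there is no arrow $(D,\xi)\to(M(A),\mu_A)$ in $A\downarrow_{\mathcal{H}}U$, and $\mu_A$ is not even weakly terminal.

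No other choice of $J$ rescues the literal statement either. The terminal object of $A\downarrow_{\mathcal{H}}U$ is actually $M(A)$ with the indiscrete topology, but for it rigidity fails: for $f=0\colon c_0\to\mathbb{C}$, every character of $\ell^\infty$ vanishing on $c_0$ is an admissible $\psi$.

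What Lemma \ref{Lemma:normal-monic-C*}(2) really gives is strict continuity with respect to the strict topology of $M(B)$ determined by $B\supseteq f(A)$. That is, it establishes the universal property only against test objects of the form $(M(B),\mu_B f)$ with $f\in\mathcal{H}$. Applied with $B=UD$ unital, it only yields norm-to-strict continuity, which is exactly why your argument stalls.

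Those special test objects are precisely the ones used in the proof of Theorem \ref{Theorem:main}(6). So the argument goes through once terminality in Definition \ref{Definition:hull-functor} is restricted to them, or to pairs $(D,\xi)$ for which $d\mapsto d\xi(a)$ and $d\mapsto \xi(a)d$ are continuous from the unit ball of $D$ into the norm topology.

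Your fallback therefore has to restrict the pairs, not merely the objects of $\mathcal{E}$:
restricting $\mathcal{E}$ to multiplier algebras with their strict topologies does not help, since the counterexample uses one.
The ring case offers no model to copy, since Proposition \ref{Proposition:husk-for-rings} has the same defect. Take $R=\bigoplus_{\mathbb{N}}k$, let $D=M(R)$ carry its strict (product) topology, and let $A=\prod_{\mathbb{N}}k$ with $\xi=\mathrm{id}$. The strict topology of $M(A)=A$ is discrete, while that of $D$ is not.

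With that modification, the rest of your proposal is sound. The bounded approximate-unit argument gives rigidity clause (1), and items (2)--(4) follow from Lemma \ref{Lemma:non-degenerate-C*} and Theorem \ref{Theorem:main}.

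One small correction: rigidity clause (2) ranges over all *-homomorphisms $M(x)\to M(x)$ fixing $x$, not only strict ones, so it is not a special case of clause (1). It follows instead from $\varphi(m)a=\varphi(m)\varphi(a)=\varphi(ma)=ma$.
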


\section{The enveloping von\ Neumann algebra of a C*-algebra\label%
{Section:vN-envelope}}

We conclude in this section by considering the \emph{von Neumann envelope }%
of a C*-algebra, induced by the canonical forgetful functor from von Neumann
algebras to C*-algebras.

\subsection{von\ Neumann algebras}

C*-algebras can be defined, concretely, as norm-closed *-subalgebras of the
algebra $B(H)$ of bounded linear operators on a Hilbert space. Likewise,
von\ Neumann algebras are the *-subalgebras of $B(H)$ that are closed with
respect to the strong (or, equivalently, weak) operator topology \cite[%
Section 2.2.6]{pedersen_algebras_1979}.

\emph{Normal non-degenerate }*-homomorphisms provide the natural notion of
morphism between von Neumann algebras. These are precisely the
*-homomorphisms that preserve the limits of bounded increasing nets of
self-adjoint elements \cite[Section 2.5.1]{pedersen_algebras_1979}. Let $M$
be a von Neumann algebra. Define $M_{\ast }$ to be the Banach space of
normal linear functionals on $M$.\ Then the evaluation map establishes an
isomorphism between $M$ and the dual space of $M_{\ast }$ \cite[Section 3.6.5%
]{pedersen_algebras_1979}, and this characterizes von Neumann algebras among
C*-algebras. The topology on $M$ induced by the w*-topology on the dual of $%
M_{\ast }$ under this isomorphism is called $\sigma $-weak topology. A
*-homomorphism between von Neumann algebras is normal if and only if it is $%
\sigma $-weakly continuous.

\subsection{The von Neumann envelope}

Let $A$ be a C*-algebra. A representation of $A$ is a *-homomorphism $\pi
:A\rightarrow B\left( H\right) $ for some Hilbert space $H$. Such a
representation is non-degenerate if it is so as a *-homomorphism $%
A\rightarrow B\left( H\right) $. By the Bicommutant Theorem \cite[I.9.1.1]%
{blackadar_operator_2006}, this is equivalent to the assertion that the
closure of $\pi \left( A\right) $ in $B\left( H\right) $ with respect to the
weak operator topology is equal to the double commutant $\pi \left( A\right)
^{\prime \prime }$.

Every C*-algebra admits a so-colled \emph{universal representation }$\omega
_{A}:A\rightarrow B\left( H_{A}\right) $ \cite[III.5.2.1]%
{blackadar_operator_2006}. By definition, this is the sum of all the
irreducible representations. The \emph{von Neumann envelope }or \emph{%
enveloping von Neumann algebra }$\mathrm{vN}\left( A\right) $ of $A$ is the
weak closure $\omega _{A}\left( A\right) ^{\prime \prime }$ in $B\left(
H_{A}\right) $ of the image of $A$ under the universal representation \cite[%
III.5.2.5]{blackadar_operator_2006}.

The map $\mathrm{vN}\left( A\right) _{\ast }\rightarrow A^{\ast }$, $\phi
\mapsto \phi \circ \omega _{A}$ defines an isometric isomorphism from the
predual of the von Neumann envelope of $A$ and the dual $A^{\ast }$ of $A$ 
\cite[III.5.2.6]{blackadar_operator_2006}. The corresponding dual map is
thus an isometric isomorphism $A^{\ast \ast }\rightarrow \mathrm{vN}\left(
A\right) $. Thus, $\mathrm{vN}\left( A\right) $ can be identified as a
Banach space with the second dual of $A$. Furthermore, under this
identification the $\sigma $-weak topology on \textrm{vN}$\left( A\right) $
corresponds to the w*-topology on $A^{\ast \ast }$. As a consequence the von
Neumann envelope of $A$ is sometimes just called the second dual of $A$ and
denoted by $A^{\ast \ast }$. Furthermore, the C*-algebra $A$ is identified
with its canonical image inside $A^{\ast \ast }\cong \mathrm{vN}\left(
A\right) $.

Any *-homomorphism $\phi :A\rightarrow B$ admits a unique extension $\phi
^{\ast \ast }:A^{\ast \ast }\rightarrow B^{\ast \ast }$ to a normal
*-homomorphism between their von Neumann envelopes.

When $M$ is a von Neumann algebra, $M$ admits a universal \emph{normal }%
representation $\nu _{M}:M\rightarrow B\left( H_{M}\right) $. This is a
faithful normal representation, and a \emph{direct summand} of the universal
representation. The map $\nu _{M}$ extends to a normal *-homomorphism $%
M^{\ast \ast }\rightarrow \nu _{M}\left( M\right) $ that is the identity on $%
\nu _{M}\left( M\right) $ (conditional expectation).

\subsection{Universal property}

The von Neumann envelope satisfies the following universal property:
whenever $\pi $ is a non-degenerate representation of $A$ on a Hilbert space 
$A$, there exists a unique normal *-homomorphism $\pi ^{\ast \ast }:A^{\ast
\ast }\rightarrow \pi \left( A\right) ^{\prime \prime }$ such that $\pi
^{\ast \ast }|_{A}=\pi $ \cite[Section 2.5.1]{pedersen_algebras_1979}. By
the above remarks, this can be equivalently reformulated as follows: given
any non-degenerate *-homomorphism $\psi :A\rightarrow M$ from $A$ to a von\
Neumann algebra $M$ such that $\psi \left( A\right) $ is $\sigma $-weakly
dense in $M$, there exists a unique normal *-homomorphism $\psi ^{\ast \ast
}:A^{\ast \ast }\rightarrow M$ such that $\psi ^{\ast \ast }|_{A}=A$.

Recall that a normal monic in the category of C*-algebras is an injective
*-homomorphism with an ideal as image. It is furthermore a normal essential
monic if and only if it is an injective *-homomorphism whose image is an
essential ideal.

\begin{definition}
Let $\phi :A\rightarrow B$ be a *-homomorphism between C*-algebras. Then $%
\phi $ is vN-dense if $\phi ^{\ast \ast }:A^{\ast \ast }\rightarrow B^{\ast
\ast }$ has $\sigma $-weakly dense image.
\end{definition}

\begin{lemma}
Let $\phi :A\rightarrow M$ be a *-homomorphism between C*-algebras. Suppose
that $M$ is a von Neumann algebra. The following assertions are equivalent:

\begin{enumerate}
\item $\phi $ is vN-dense;

\item $\phi $ has $\sigma $-weakly dense image.
\end{enumerate}
\end{lemma}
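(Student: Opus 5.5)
The plan is to compare the two σ-weak topologies involved, that of $M$ and that of $M^{\ast\ast}$. They are linked by the normal surjective $\ast$-homomorphism $\varepsilon:M^{\ast\ast}\rightarrow M$ that extends $\mathrm{id}_M$, namely the extension of the universal normal representation $\nu_M$ recalled above. The first step records two facts.

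\begin{enumerate}
\item $A$ is σ-weakly dense in $A^{\ast\ast}$, by Goldstine's theorem under the identification $A^{\ast\ast}\cong\mathrm{vN}(A)$, or equivalently by the bicommutant theorem applied to the universal representation.
\item The image of a normal $\ast$-homomorphism between von Neumann algebras is σ-weakly closed.
\end{enumerate}

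Together these give
\begin{equation*}
\phi^{\ast\ast}(A^{\ast\ast})=\overline{\phi(A)}^{\,\sigma\text{-weak in }M^{\ast\ast}}.
\end{equation*}
By the uniqueness in the universal property of the von Neumann envelope, the composite $\varepsilon\circ\phi^{\ast\ast}:A^{\ast\ast}\rightarrow M$ is the unique normal extension of $\phi$. It therefore satisfies
\begin{equation*}
\varepsilon\phi^{\ast\ast}(A^{\ast\ast})=\overline{\phi(A)}^{\,\sigma\text{-weak in }M}.
\end{equation*}

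For (1)$\Rightarrow$(2), assume $\phi^{\ast\ast}(A^{\ast\ast})$ is σ-weakly dense in $M^{\ast\ast}$. Since $\varepsilon$ is σ-weakly continuous and surjective, $\varepsilon\phi^{\ast\ast}(A^{\ast\ast})$ is σ-weakly dense in $\varepsilon(M^{\ast\ast})=M$. By the second display, $\varepsilon\phi^{\ast\ast}(A^{\ast\ast})$ is already σ-weakly closed in $M$, so it equals $M$. Hence the σ-weak closure of $\phi(A)$ in $M$ is $M$, which is (2).

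For (2)$\Rightarrow$(1), assume $\phi(A)$ is σ-weakly dense in $M$. Let $N:=\phi^{\ast\ast}(A^{\ast\ast})$; by the first display this is a von Neumann subalgebra of $M^{\ast\ast}$ with $\varepsilon(N)=M$. Let $z\in M^{\ast\ast}$ be the central support projection of $\varepsilon$, so that $zM^{\ast\ast}\cong M$ via $\varepsilon$. From $\varepsilon(N)=M$ I would first deduce $zN=zM^{\ast\ast}$. The remaining task is to show that $N$ also fills the singular summand $(1-z)M^{\ast\ast}$. My first attempt would be to pass to preduals. The predual of $M^{\ast\ast}$ is $M^{\ast}$, so it suffices to show that a functional $\omega\in M^{\ast}$ vanishing on $\phi(A)$ vanishes on all of $M$. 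I would decompose $\omega=\omega_{n}+\omega_{s}$ into its normal and singular parts. The normal part $\omega_{n}$ should be killed by the density hypothesis (2), transported from $M$ to $M^{\ast\ast}$ through $\varepsilon$.

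I expect the main obstacle to be the singular part $\omega_{s}$. Nothing in hypothesis (2) controls how $\omega_{s}$ behaves on $\phi(A)$. I would try to use non-degeneracy together with Kaplansky density: approximate elements of $M$ σ-weakly, and within bounded sets, by elements of $\phi(A)$, and then test $\omega_{s}$ against those approximants. This step needs careful checking. For the inclusion $c_{0}\subset\ell^{\infty}$, the functional $\omega_{s}$ given by a Banach limit vanishes on $c_{0}$ without vanishing on $\ell^{\infty}$. So the argument must use some feature of the paper's setting, such as the identification of $M$ with its normal part $zM^{\ast\ast}$, to make $(1-z)$ invisible. Otherwise this implication would require an additional hypothesis.
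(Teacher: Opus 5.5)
\textbf{Direction (1)$\Rightarrow$(2).} Your argument is correct and is the paper's argument. Your $\varepsilon$ is the normal conditional expectation $E_{M}:M^{\ast\ast}\rightarrow M$ used there. You also make explicit the $\sigma$-weak density of $A$ in $A^{\ast\ast}$, which the paper's final sentence uses without saying so.

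\textbf{Direction (2)$\Rightarrow$(1).} Do not look for a rescuing feature of the setting. The definition of vN-dense asks for density in all of $M^{\ast\ast}$, including the singular summand $(1-z)M^{\ast\ast}$, and with that definition the implication is false. Your $c_{0}\subset\ell^{\infty}$ example refutes the lemma itself, not just your method:
\begin{itemize}
\item $c_{0}$ is $\sigma$-weakly dense in $\ell^{\infty}$, so (2) holds.
\item A Banach limit $L\in(\ell^{\infty})^{\ast}$ is $\sigma$-weakly continuous on $(\ell^{\infty})^{\ast\ast}$.
\item $L$ vanishes on $\phi^{\ast\ast}(c_{0}^{\ast\ast})$, since $L\circ\phi=0$.
\item $L(1)=1$, so the image of $\phi^{\ast\ast}$ is not dense and (1) fails.
\end{itemize}

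The same phenomenon occurs in general. Under $\mathrm{vN}(A)\cong A^{\ast\ast}$, the map $\phi^{\ast\ast}$ is the Banach bidual map. Hence the $\sigma$-weak closure of its image is $(\ker\phi^{\ast})^{\perp}=\phi(A)^{\perp\perp}$, where $\phi^{\ast}:M^{\ast}\rightarrow A^{\ast}$ is the Banach adjoint. Therefore (1) holds iff $\phi(A)^{\perp}=0$ in $M^{\ast}$, i.e.\ iff $\phi(A)$ is norm dense, i.e.\ iff $\phi$ is surjective. Any $\sigma$-weakly dense but non-surjective $\phi$, such as $K(H)\subset B(H)$, is a counterexample.

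\textbf{Where the paper's proof fails.} The paper argues that ``$\phi(A)$ is $\sigma$-weakly dense in $M$, which is $\sigma$-weakly dense in $M^{\ast\ast}$''. This applies transitivity of density across two different topologies. Density in $M$ refers to $\sigma(M,M_{\ast})$. The topology $M$ inherits from $M^{\ast\ast}$ is $\sigma(M,M^{\ast})$, which is in general strictly finer, and the difference is exactly the singular functionals.

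Your normal-part step would not go through either: $\omega|_{\phi(A)}=0$ does not imply $\omega_{n}|_{\phi(A)}=0$.

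\textbf{What survives.} Your second display already proves a corrected statement. Condition (2) is equivalent to the modified condition that $\varepsilon\circ\phi^{\ast\ast}$, the unique normal extension $A^{\ast\ast}\rightarrow M$ of $\phi$, has $\sigma$-weakly dense image.
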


\begin{proof}
(1)$\Rightarrow $(2) By assumption $\phi :A^{\ast \ast }\rightarrow M^{\ast
\ast }$ has $\sigma $-weakly dense image. Identify $M$ with its image under
the universal normal representation and consider the normal conditional
expectation $E_{M}:M^{\ast \ast }\rightarrow M$. Then $E_{M}\circ \phi
^{\ast \ast }$ has $\sigma $-weakly dense image. Since $\left( E_{M}\circ
\phi ^{\ast \ast }\right) |_{A}=\phi $, the same holds for $\phi $;

(2) By hypothesis $\phi \left( A\right) $ is $\sigma $-weakly dense in $M$,
which is $\sigma $-weakly dense in $M^{\ast \ast }$. Thus, $\phi \left(
A\right) \subseteq \phi ^{\ast \ast }\left( A^{\ast \ast }\right) $ is $%
\sigma $-weakly dense in $M^{\ast \ast }$.
\end{proof}

In view of the above remarks, we can see the von Neumann envelope of a
C*-algebra as a hull \emph{created }by the forgetful functor $U$ from von
Neumann algebras to C*-algebras \emph{with respect to }the refinement of 
\emph{vN-dense }injective *-homomorphisms.

\begin{proposition}
Let:

\begin{itemize}
\item $\mathcal{C}$ be monopole of C*-algebras with *-homomorphisms as
morphisms and injective *-homomorphisms as positive morphisms;

\item $\mathcal{H}$ be the refinement of $\mathcal{C}$ comprising the
vN-dense injective *-homomorphisms;

\item $\mathcal{E}$ be the category of von Neumann algebras with $\sigma $%
-weakly continuous *-homomorphisms as morphisms;

\item $U:\mathcal{E}\rightarrow \underline{\mathcal{C}}$ be the forgetful
functor.
\end{itemize}

Then:

\begin{enumerate}
\item $\left( \mathcal{H},U:\mathcal{E}\rightarrow \underline{\mathcal{C}}%
\right) $ is a capacitor for $\mathcal{C}$;

\item the corresponding negative arrows in $\mathcal{C}$ are all
*-homomorphisms;

\item the functor $U$ creates hulls in the positive monopole of C*-algebras
with respect to vN-dense *-homomorphisms;

\item for a C*-algebra $A$, the corresponding hull is the canonical
inclusion of $A$ into its double dual.
\end{enumerate}
\end{proposition}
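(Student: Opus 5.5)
The plan is to verify the two axioms of Definition \ref{Definition:capacitor} directly, using the universal property of the von Neumann envelope recalled above, and then to identify the negative arrows through the definition of $\mathcal{C}_{-}$ in Theorem \ref{Theorem:main}. Items (3) and (4) then follow formally from Theorem \ref{Theorem:main} and Definition \ref{Definition:create-hull}. Throughout, for a C*-algebra $A$ take $JA:=A^{\ast \ast }\cong \mathrm{vN}\left( A\right) $ with $\mu _{A}:A\rightarrow UJA$ the canonical inclusion.

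\textbf{Preliminaries on $\mu _{A}$ and faithfulness.} First, $U$ is faithful, being a forgetful functor. Next, $\mu _{A}\in \mathcal{H}$: it is injective, and it is vN-dense by the preceding lemma, since $A$ is $\sigma $-weakly dense in $A^{\ast \ast }$.

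\textbf{Axiom (1): completions.} Let $\xi :A\rightarrow UN$ be an arrow in $\mathcal{H}$, with $N$ a von Neumann algebra. By the lemma, $\xi \left( A\right) $ is $\sigma $-weakly dense in $N$. The universal property (normal extension $\xi ^{\ast \ast }$, composed with the normal conditional expectation $N^{\ast \ast }\rightarrow N$ if needed) gives a normal $\psi :A^{\ast \ast }\rightarrow N$ with $\psi \mu _{A}=\xi $. The problem is that a terminal object of $x\downarrow _{\mathcal{C}}U$ needs an arrow $N\rightarrow A^{\ast \ast }$, i.e. arrows \emph{into} $\mu _{A}$, which is the direction used in the proof of Theorem \ref{Theorem:main}(6) via $J_{+}$. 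I would therefore produce a normal $\theta :N\rightarrow A^{\ast \ast }$ with $\theta \xi =\mu _{A}$ as follows. By density, $N$ is a normal quotient of $A^{\ast \ast }$ via $\psi $, so $N\cong zA^{\ast \ast }$ for a central projection $z$. Injectivity of $\xi $ should then force $z=1$, making $\psi $ an isomorphism and $\theta =\psi ^{-1}$. The justification is that every state of $A$ extends to a normal state on $A^{\ast \ast }$ supported under $z$ when $\xi $ is isometric: $\xi $ injective implies isometric, so states of $A$ extend through $N$. Uniqueness of $\theta $ follows from $\sigma $-weak density of $\mu _{A}\left( A\right) $ and normality. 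I expect this step, showing $z=1$, to be the main obstacle.

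\textbf{Axiom (2): rigidity.} For $f:x\rightarrow y$, two normal maps $A^{\ast \ast }\rightarrow B^{\ast \ast }$ agreeing on the $\sigma $-weakly dense $A$ coincide. The same density argument gives the endomorphism clause, provided $f$ in Definition \ref{Definition:rigid}(2) is taken normal. If it is not, I would note that the definition should be read in $\mathcal{E}$, as in the proof of Theorem \ref{Theorem:main}(6).

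\textbf{Item (2): negative arrows.} By the fact recalled above, every $\ast $-homomorphism $\phi :A\rightarrow B$ has a normal extension $\phi ^{\ast \ast }$, and this is exactly $J_{-}\phi $. Hence $\mathcal{C}_{-}$ consists of all arrows, which proves (2).
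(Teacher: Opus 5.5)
\paragraph{Where the proof fails.} The gap is the step you flagged yourself, and it cannot be closed: injectivity of $\xi$ does not force $z=1$. Read vN-density of a map into a von Neumann algebra as $\sigma$-weak density of its image, as you do via the lemma. Take $A=c$, the convergent sequences, and let $\xi :c\rightarrow \ell ^{\infty }$ be the inclusion. It is injective, and its image is $\sigma$-weakly dense because it contains $c_{0}$. But $c^{\ast \ast }\cong \ell ^{\infty }\oplus \mathbb{C}$, where the extra summand carries the character $x\mapsto \lim x$. The normal extension $\psi :c^{\ast \ast }\rightarrow \ell ^{\infty }$ is the projection onto the first summand, so $z\neq 1$. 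The same happens for $C[0,1]\subseteq L^{\infty }[0,1]$, where $\psi$ kills the entire atomic part of $C[0,1]^{\ast \ast }$.

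Your justification breaks because an isometric $\xi$ only lets states of $A$ extend to \emph{states} of $N$, whereas $z=1$ needs \emph{normal} extensions. Every extension of $\lim$ to $\ell ^{\infty }$ vanishes on $c_{0}$ and is therefore singular.

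No other choice of $\theta$ can help either. Suppose a normal $\theta :N\rightarrow A^{\ast \ast }$ with $\theta \xi =\mu _{A}$ existed. Then the normal maps $\psi \theta$ and $\theta \psi$ fix the $\sigma$-weakly dense copies of $A$ in $N$ and in $A^{\ast \ast }$. Hence $\psi \theta =1_{N}$ and $\theta \psi =1_{A^{\ast \ast }}$, so $\psi$ would be an isomorphism. In the example there is therefore no arrow $(\ell ^{\infty },\xi )\rightarrow (c^{\ast \ast },\mu _{c})$ at all, and $\mu _{c}$ is not terminal in $c\downarrow _{\mathcal{H}}U$.

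For $A=C[0,1]$ there is no terminal object whatsoever. The normal extensions to $L^{\infty }[0,1]$ and to the atomic representation on $\ell ^{\infty }([0,1])$ have kernels $(1-z_{1})A^{\ast \ast }$ and $(1-z_{2})A^{\ast \ast }$ with $z_{1}z_{2}=0$. The normal extension of $\tau$ for a terminal $(P,\tau )$ would have to vanish on both kernels, hence on all of $A^{\ast \ast }$, which gives $\tau =0$.

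\paragraph{Comparison with the paper.} Axiom (1) of Definition \ref{Definition:capacitor} genuinely fails for this $\mathcal{H}$, and the paper does not supply the missing ingredient. Its justification is only ``in view of the above remarks'', that is, the universal property of $A^{\ast \ast }$. That property produces arrows \emph{out of} $A^{\ast \ast }$ and makes $\mu _{A}$ \emph{initial}: it is the global left adjoint $A\mapsto A^{\ast \ast }$ of $U$. This is exactly what your (correct) argument for item (2) uses. It is the opposite of the terminal-object requirement you rightly isolated.

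\paragraph{A further caution.} Do not rely on the lemma for $\mu _{A}\in \mathcal{H}$. Read literally, vN-density of $\phi$ says that the normal map $\phi ^{\ast \ast }$, whose range is $\sigma$-weakly closed, is onto. By the closed range theorem a bitranspose is onto only if the map itself is, so $\mu _{A}$ is vN-dense only for finite-dimensional $A$. The proof of (2)$\Rightarrow$(1) in that lemma conflates the $\sigma$-weak topology of $M$ with the finer topology $\sigma (M,M^{\ast })$ that $M$ inherits from $M^{\ast \ast }$.
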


\bibliographystyle{amsplain}
\bibliography{bibliography}

@article{marmolejo_kan_2012,
	title = {Kan extensions and lax idempotent pseudomonads},
	volume = {26},
	issn = {1201-561X},
	url = {https://mathscinet.ams.org/mathscinet-getitem?mr=2909637},
	urldate = {2026-03-13},
	journal = {Theory and Applications of Categories},
	author = {Marmolejo, F. and Wood, R. J.},
	year = {2012},
	mrnumber = {2909637},
	pages = {No. 1, 1--29},
}

@article{guitart_monades_1975,
	title = {Monades involutives complémentées},
	volume = {16},
	issn = {0008-0004},
	url = {https://mathscinet.ams.org/mathscinet-getitem?mr=396721},
	number = {1},
	urldate = {2026-03-13},
	journal = {Cahiers de Topologie et Géométrie Différentielle},
	author = {Guitart, René},
	year = {1975},
	mrnumber = {396721},
	pages = {17--101},
}

@article{di_liberti_accessibility_2023,
	title = {Accessibility and presentability in {$2$}-categories},
	volume = {227},
	issn = {0022-4049},
	url = {https://www.sciencedirect.com/science/article/pii/S0022404922001517},
	doi = {10.1016/j.jpaa.2022.107155},
	number = {1},
	urldate = {2026-03-13},
	journal = {Journal of Pure and Applied Algebra},
	author = {Di Liberti, Ivan and Loregian, Fosco},
	month = jan,
	year = {2023},
	pages = {107155},
}

@article{street_yoneda_1978,
	title = {Yoneda structures on {$2$}-categories},
	volume = {50},
	issn = {0021-8693},
	url = {https://www.sciencedirect.com/science/article/pii/0021869378901606},
	doi = {10.1016/0021-8693(78)90160-6},
	number = {2},
	urldate = {2026-03-13},
	journal = {Journal of Algebra},
	author = {Street, Ross and Walters, Robert},
	month = feb,
	year = {1978},
	pages = {350--379},
}

@article{walker_yoneda_2018,
	title = {Yoneda structures and {KZ} doctrines},
	volume = {222},
	issn = {0022-4049},
	url = {https://www.sciencedirect.com/science/article/pii/S0022404917301548},
	doi = {10.1016/j.jpaa.2017.07.004},
	number = {6},
	urldate = {2026-03-13},
	journal = {Journal of Pure and Applied Algebra},
	author = {Walker, Charles},
	month = jun,
	year = {2018},
	pages = {1375--1387},
}

@article{banaschewski_booleanization_1996,
	title = {Booleanization},
	volume = {37},
	issn = {2681-2363},
	url = {https://www.numdam.org/item/?id=CTGDC_1996__37_1_41_0},
	number = {1},
	urldate = {2026-03-09},
	journal = {Cahiers de Topologie et Géométrie Différentielle Catégoriques},
	author = {Banaschewski, Bernhard and Pultr, Aleš},
	year = {1996},
	pages = {41--60},
}

@article{bhattacharjee_hull_2024,
	title = {Hull classes in compact regular frames},
	volume = {85},
	issn = {0002-5240,1420-8911},
	url = {https://mathscinet.ams.org/mathscinet-getitem?mr=4712410},
	doi = {10.1007/s00012-024-00849-5},
	number = {2},
	urldate = {2026-03-09},
	journal = {Algebra Universalis},
	author = {Bhattacharjee, Papiya and Carrera, Ricardo E.},
	year = {2024},
	mrnumber = {4712410},
	pages = {Paper No. 17, 21},
}

@book{blyth_lattices_2005,
	series = {Universitext},
	title = {Lattices and ordered algebraic structures},
	isbn = {978-1-85233-905-0},
	url = {https://mathscinet.ams.org/mathscinet-getitem?mr=2126425},
	urldate = {2026-03-02},
	publisher = {Springer-Verlag London, Ltd., London},
	author = {Blyth, Thomas  S.},
	year = {2005},
	mrnumber = {2126425},
}

@article{cecco_categorical_2024,
	title = {A categorical approach to injective envelopes},
	volume = {15},
	issn = {2639-7390,2008-8752},
	url = {https://mathscinet.ams.org/mathscinet-getitem?mr=4736304},
	doi = {10.1007/s43034-024-00350-z},
	number = {3},
	urldate = {2026-03-07},
	journal = {Annals of Functional Analysis},
	author = {Cecco, Arianna},
	year = {2024},
	mrnumber = {4736304},
	pages = {Paper No. 49, 28},
}

@article{conrad_essential_1971,
	title = {The essential closure of an {Archimedean} lattice-ordered group},
	volume = {38},
	issn = {0012-7094,1547-7398},
	url = {https://mathscinet.ams.org/mathscinet-getitem?mr=277457},
	urldate = {2026-03-02},
	journal = {Duke Mathematical Journal},
	author = {Conrad, Paul},
	year = {1971},
	mrnumber = {277457},
	pages = {151--160},
}

@article{hager_holder_2014,
	title = {Hölder categories},
	volume = {64},
	issn = {0139-9918,1337-2211},
	url = {https://mathscinet.ams.org/mathscinet-getitem?mr=3227761},
	doi = {10.2478/s12175-014-0230-x},
	number = {3},
	urldate = {2026-03-04},
	journal = {Mathematica Slovaca},
	author = {Hager, Anthony W. and Martínez, Jorge},
	year = {2014},
	mrnumber = {3227761},
	pages = {607--642},
}

@article{kubis_injective_2015,
	title = {Injective objects and retracts of {Fraïssé} limits},
	volume = {27},
	issn = {0933-7741,1435-5337},
	url = {https://mathscinet.ams.org/mathscinet-getitem?mr=3334083},
	doi = {10.1515/forum-2012-0081},
	number = {2},
	urldate = {2026-03-07},
	journal = {Forum Mathematicum},
	author = {Kubiś, Wiesław},
	year = {2015},
	mrnumber = {3334083},
	pages = {807--842},
}

@article{bezhanishvili_dedekind_2013,
	title = {Dedekind completions of bounded {Archimedean} {$\ell $}-algebras},
	volume = {12},
	issn = {0219-4988,1793-6829},
	url = {https://mathscinet.ams.org/mathscinet-getitem?mr=3005590},
	doi = {10.1142/S0219498812501393},
	number = {1},
	urldate = {2026-03-07},
	journal = {Journal of Algebra and its Applications},
	author = {Bezhanishvili, Guram and Morandi, Patrick J. and Olberding, Bruce},
	year = {2013},
	mrnumber = {3005590},
	pages = {1250139, 16},
}

@article{bezhanishvili_functorial_2016,
	title = {A functorial approach to {Dedekind} completions and the representation of vector lattices and {$\ell $}-algebras by normal functions},
	volume = {31},
	issn = {1201-561X},
	url = {https://mathscinet.ams.org/mathscinet-getitem?mr=3584699},
	urldate = {2026-03-04},
	journal = {Theory and Applications of Categories},
	author = {Bezhanishvili, Guram and Morandi, Patrick J. and Olberding, Bruce},
	year = {2016},
	mrnumber = {3584699},
	pages = {Paper No. 37, 1095--1133},
}

@article{hager_minimum_2023,
	title = {On minimum proper essential extensions in a category},
	volume = {46},
	issn = {1607-3606,1727-933X},
	url = {https://mathscinet.ams.org/mathscinet-getitem?mr=4576827},
	doi = {10.2989/16073606.2022.2033870},
	number = {3},
	urldate = {2026-03-04},
	journal = {Quaestiones Mathematicae. Journal of the South African Mathematical Society},
	author = {Hager, Anthony W. and Wynne, Brian},
	year = {2023},
	mrnumber = {4576827},
	pages = {495--512},
}

@article{las_vergnas_convexity_1980,
	title = {Convexity in oriented matroids},
	volume = {29},
	issn = {0095-8956,1096-0902},
	url = {https://mathscinet.ams.org/mathscinet-getitem?mr=586435},
	doi = {10.1016/0095-8956(80)90082-9},
	number = {2},
	urldate = {2026-03-04},
	journal = {Journal of Combinatorial Theory. Series B},
	author = {Las Vergnas, Michel},
	year = {1980},
	mrnumber = {586435},
	pages = {231--243},
}

@incollection{bollobas_erdos-ko-rado_1997,
	title = {An {Erd\H{o}s}-{Ko}-{Rado} theorem for signed sets},
	volume = {34},
	issn = {0898-1221,1873-7668},
	url = {https://mathscinet.ams.org/mathscinet-getitem?mr=1486880},
	doi = {10.1016/S0898-1221(97)00215-0},
	number = {11},
	urldate = {2026-03-04},
	booktitle = {Computers \& {Mathematics} with {Applications}. {An} {International} {Journal}},
	author = {Bollobás, Béla and Leader, Imre},
	year = {1997},
	mrnumber = {1486880},
	doi = {10.1016/S0898-1221(97)00215-0},
	note = {Journal Abbreviation: Comput. Math. Appl.},
	pages = {9--13},
}

@article{brini_combinatorics_2005,
	title = {Combinatorics, superalgebras, invariant theory and representation theory.},
	volume = {55},
	issn = {1286-4889},
	url = {https://eudml.org/doc/227556},
	urldate = {2026-03-04},
	journal = {Séminaire Lotharingien de Combinatoire},
	publisher = {Universität Wien, Fakultät für Mathematik},
	author = {Brini, Andrea},
	year = {2005}
}

@article{kainen_weak_1971,
	title = {Weak {Adjoint} {Functors}.},
	volume = {122},
	issn = {0025-5874; 1432-1823},
	url = {https://eudml.org/doc/171575},
	language = {und},
	urldate = {2026-03-04},
	journal = {Mathematische Zeitschrift},
	author = {Kainen, Paul C.},
	year = {1971},
	pages = {1--9},
}

@book{kromer_tool_2007,
	series = {Science {Networks}. {Historical} {Studies}},
	title = {Tool and object},
	volume = {32},
	isbn = {978-3-7643-7523-2},
	url = {https://mathscinet.ams.org/mathscinet-getitem?mr=2272843},
	urldate = {2026-03-02},
	publisher = {Birkhäuser Verlag, Basel},
	author = {Krömer, Ralf},
	year = {2007},
	mrnumber = {2272843},
}

@article{eilenberg_general_1945,
	title = {General theory of natural equivalences},
	volume = {58},
	issn = {0002-9947,1088-6850},
	url = {https://mathscinet.ams.org/mathscinet-getitem?mr=13131},
	doi = {10.2307/1990284},
	urldate = {2026-03-02},
	journal = {Transactions of the American Mathematical Society},
	author = {Eilenberg, Samuel and MacLane, Saunders},
	year = {1945},
	mrnumber = {13131},
	pages = {231--294},
}

@book{borceux_malcev_2004,
	series = {Mathematics and its {Applications}},
	title = {Malcev, protomodular, homological and semi-abelian categories},
	volume = {566},
	isbn = {978-1-4020-1961-6},
	url = {https://mathscinet.ams.org/mathscinet-getitem?mr=2044291},
	doi = {10.1007/978-1-4020-1962-3},
	urldate = {2025-10-31},
	publisher = {Kluwer Academic Publishers, Dordrecht},
	author = {Borceux, Francis and Bourn, Dominique},
	year = {2004},
	mrnumber = {2044291},
}

@article{janelidze_semi-abelian_2002,
	series = {Category {Theory} 1999: selected papers, conference held in {Coimbra} in honour of the 90th birthday of {Saunders} {Mac} {Lane}},
	title = {Semi-abelian categories},
	volume = {168},
	issn = {0022-4049},
	url = {https://www.sciencedirect.com/science/article/pii/S0022404901001037},
	doi = {10.1016/S0022-4049(01)00103-7},
	number = {2},
	urldate = {2025-10-31},
	journal = {Journal of Pure and Applied Algebra},
	author = {Janelidze, George and Márki, László and Tholen, Walter},
	month = mar,
	year = {2002},
	pages = {367--386},
}

@article{raphael_essential_2005,
	title = {On essential ring embeddings and the epimorphic hull of {C}({X})},
	volume = {14},
	url = {https://mathscinet.ams.org/mathscinet-getitem?mr=2122824},
	urldate = {2025-11-02},
	journal = {Theory and Applications of Categories},
	author = {Raphael, Robert  and Woods, R. Grant},
	year = {2005},
	mrnumber = {2122824},
	pages = {No. 2, 46--52},
}

@book{sharpe_injective_1972,
	series = {Cambridge {Tracts} in {Mathematics} and {Mathematical} {Physics}, {No}. 62},
	title = {Injective modules},
	url = {https://mathscinet.ams.org/mathscinet-getitem?mr=360706},
	urldate = {2025-06-14},
	publisher = {Cambridge University Press, London-New York},
	author = {Sharpe, David W. and Vámos, Peter},
	year = {1972},
	mrnumber = {360706},
}

@article{janelidze_ideals_2007,
	title = {Ideals and clots in universal algebra and in semi-abelian categories},
	volume = {307},
	issn = {0021-8693},
	url = {https://www.sciencedirect.com/science/article/pii/S0021869306003711},
	doi = {10.1016/j.jalgebra.2006.05.022},
	number = {1},
	urldate = {2025-11-03},
	journal = {Journal of Algebra},
	author = {Janelidze, George and Márki, László and Ursini, Aldo},
	month = jan,
	year = {2007},
	pages = {191--208},
}

@article{adamek_injective_2002,
	title = {Injective hulls are not natural},
	volume = {48},
	issn = {1420-8911},
	url = {https://doi.org/10.1007/s000120200006},
	doi = {10.1007/s000120200006},
	
	number = {4},
	urldate = {2025-11-04},
	journal = {Algebra Universalis},
	author = {Adámek, Jiří and Herrlich, Horst and Rosický, Jiří and Tholen, Walter},
	month = dec,
	year = {2002},
	pages = {379--388},
}

@incollection{gran_introduction_2021,
	series = {Coimbra {Math}. {Texts}},
	title = {An introduction to regular categories},
	volume = {1},
	url = {https://mathscinet.ams.org/mathscinet-getitem?mr=4367551},
	doi = {10.1007/978-3-030-84319-9_4},
	urldate = {2025-09-28},
	booktitle = {New perspectives in algebra, topology and categories},
	publisher = {Springer, Cham},
	author = {Gran, Marino},
	year = {2021},
	mrnumber = {4367551},
	pages = {113--145},
}

@article{guo_relative_2024,
	title = {Relative injective envelopes and relative projective covers on ring extensions},
	volume = {52},
	issn = {0092-7872},
	url = {https://doi.org/10.1080/00927872.2024.2309525},
	doi = {10.1080/00927872.2024.2309525},
	number = {7},
	urldate = {2026-02-21},
	journal = {Communications in Algebra},
	publisher = {Taylor \& Francis},
	author = {Guo, Shufeng},
	month = jul,
	year = {2024},
	note = {\_eprint: https://doi.org/10.1080/00927872.2024.2309525},
	pages = {2868--2883},
}

@article{day_injectivity_1972,
	title = {Injectivity in {Equational} {Classes} of {Algebras}},
	volume = {24},
	issn = {0008-414X, 1496-4279},
	url = {https://www.cambridge.org/core/journals/canadian-journal-of-mathematics/article/injectivity-in-equational-classes-of-algebras/11F4218736A6568F72BBBEDEBA76C024},
	doi = {10.4153/CJM-1972-017-8},
	
	number = {2},
	urldate = {2026-02-21},
	journal = {Canadian Journal of Mathematics},
	author = {Day, Alan},
	month = apr,
	year = {1972},
	pages = {209--220},
}

@article{porst_characterization_1981,
	title = {Characterization of injective envelopes},
	volume = {22},
	issn = {0008-0004},
	url = {https://mathscinet.ams.org/mathscinet-getitem?mr=639050},
	number = {4},
	urldate = {2026-02-21},
	journal = {Cahiers de Topologie et Géométrie Différentielle},
	author = {Porst, Hans-Eberhard},
	year = {1981},
	mrnumber = {639050},
	pages = {399--406},
}

@article{gran_semi-abelian_2004,
	title = {Semi-abelian monadic categories},
	volume = {13},
	issn = {1201-561X},
	url = {https://mathscinet.ams.org/mathscinet-getitem?mr=2116325},
	urldate = {2026-02-23},
	journal = {Theory and Applications of Categories},
	author = {Gran, Marino and Rosický, Jiří},
	year = {2004},
	mrnumber = {2116325},
	pages = {No. 6, 106--113},
}

@misc{van_daele_multiplier_2025,
	title = {Multiplier algebras and local units},
	url = {http://arxiv.org/abs/2507.08769},
	doi = {10.48550/arXiv.2507.08769},
	urldate = {2026-02-26},
	publisher = {arXiv},
	author = {Van Daele, Alfons and Vercruysse, Joost},
	month = jul,
	year = {2025},
	note = {arXiv:2507.08769},
}

@article{dauns_multiplier_1969,
	title = {Multiplier {Rings} and {Primitive} {Ideals}},
	volume = {145},
	issn = {0002-9947},
	url = {https://www.jstor.org/stable/1995063},
	doi = {10.2307/1995063},
	urldate = {2025-10-20},
	journal = {Transactions of the American Mathematical Society},
	author = {Dauns, John},
	year = {1969},
	note = {Publisher: American Mathematical Society},
	pages = {125--158},
}

@article{lupini_fraisse_2018,
	title = {Fraïssé limits in functional analysis},
	volume = {338},
	issn = {0001-8708},
	url = {https://mathscinet.ams.org/mathscinet-getitem?mr=3861702},
	doi = {10.1016/j.aim.2018.08.012},
	urlyear = {2021-10-11},
	journal = {Advances in Mathematics},
	author = {Lupini, Martino},
	year = {2018},
	mrnumber = {3861702},
	pages = {93--174},
}

@article{bartosova_ramsey_2017,
	title = {The {Ramsey} property for {Banach} spaces and {Choquet} simplices, and applications},
	volume = {355},
	issn = {1631-073X},
	url = {https://mathscinet.ams.org/mathscinet-getitem?mr=3730502},
	doi = {10.1016/j.crma.2017.11.001},
	number = {12},
	urlyear = {2021-10-11},
	journal = {Comptes Rendus Mathématique. Académie des Sciences. Paris},
	author = {Bartošová, Dana and Lopez-Abad, Jordi and Lupini, Martino and Mbombo, Brice},
	year = {2017},
	mrnumber = {3730502},
	pages = {1242--1246},
}

@article{eagle_fraisse_2016,
	title = {Fraïssé limits of {C}*-algebras},
	volume = {81},
	issn = {0022-4812},
	url = {https://mathscinet.ams.org/mathscinet-getitem?mr=3519456},
	doi = {10.1017/jsl.2016.14},
	number = {2},
	urlyear = {2021-10-11},
	journal = {The Journal of Symbolic Logic},
	author = {Eagle, Christopher J. and Farah, Ilijas and Hart, Bradd and Kadets, Boris and Kalashnyk, Vladyslav and Lupini, Martino},
	year = {2016},
	mrnumber = {3519456},
	pages = {755--773},
}

@article{lupini_operator_2015,
	title = {Operator space and operator system analogs of {Kirchberg}'s nuclear embedding theorem},
	volume = {431},
	issn = {0022-247X},
	url = {https://mathscinet.ams.org/mathscinet-getitem?mr=3357573},
	doi = {10.1016/j.jmaa.2015.05.051},
	number = {1},
	urlyear = {2021-10-11},
	journal = {Journal of Mathematical Analysis and Applications},
	author = {Lupini, Martino},
	year = {2015},
	mrnumber = {3357573},
	pages = {47--56},
}

@article{eilenberg_group_1942,
	title = {Group extensions and homology},
	volume = {43},
	issn = {0003-486X},
	url = {https://zbmath.org/?q=an%3A0061.40602},
	doi = {10.2307/1968966},
	urlyear = {2021-10-13},
	journal = {Annals of Mathematics. Second Series},
	author = {Eilenberg, Samuel and MacLane, Saunders},
	year = {1942},
	pages = {757--831},
}

@book{blecher_operator_2004,
	series = {London {Mathematical} {Society} {Monographs}. {New} {Series}},
	title = {Operator algebras and their modules—an operator space approach},
	volume = {30},
	isbn = {978-0-19-852659-9},
	url = {https://mathscinet.ams.org/mathscinet-getitem?mr=2111973},
	urlyear = {2021-12-29},
	publisher = {The Clarendon Press, Oxford University Press, Oxford},
	author = {Blecher, David P. and Le Merdy, Christian},
	year = {2004},
	doi = {10.1093/acprof:oso/9780198526599.001.0001},
	mrnumber = {2111973},
	doi = {10.1093/acprof:oso/9780198526599.001.0001},
}

@book{mac_lane_categories_1998,
	edition = {Second},
	series = {Graduate {Texts} in {Mathematics}},
	title = {Categories for the working mathematician},
	volume = {5},
	isbn = {978-0-387-98403-2},
	url = {https://mathscinet.ams.org/mathscinet-getitem?mr=1712872},
	urlyear = {2021-12-29},
	publisher = {Springer-Verlag, New York},
	author = {Mac Lane, Saunders},
	year = {1998},
	mrnumber = {1712872},
}

@book{awodey_category_2006,
	series = {Oxford {Logic} {Guides}},
	title = {Category theory},
	volume = {49},
	isbn = {978-0-19-856861-2},
	url = {https://mathscinet.ams.org/mathscinet-getitem?mr=2229319},
	urlyear = {2022-02-02},
	publisher = {The Clarendon Press, Oxford University Press, New York},
	author = {Awodey, Steve},
	year = {2006},
	doi = {10.1093/acprof:oso/9780198568612.001.0001},
	mrnumber = {2229319},
	doi = {10.1093/acprof:oso/9780198568612.001.0001},
}

@article{rump_lateral_2009,
	title = {Lateral completion and structure sheaf of an {Archimedean} l-group},
	volume = {213},
	issn = {0022-4049,1873-1376},
	url = {https://mathscinet.ams.org/mathscinet-getitem?mr=2462991},
	doi = {10.1016/j.jpaa.2008.05.013},
	number = {1},
	urldate = {2026-03-02},
	journal = {Journal of Pure and Applied Algebra},
	author = {Rump, Wolfgang and Yang, Yi Chuan},
	year = {2009},
	mrnumber = {2462991},
	pages = {136--143},
}

@article{carrera_hull_2011,
	title = {On hull classes of {$\ell $}-groups and covering classes of spaces},
	volume = {61},
	issn = {0139-9918,1337-2211},
	url = {https://mathscinet.ams.org/mathscinet-getitem?mr=2796252},
	doi = {10.2478/s12175-011-0020-7},
	number = {3},
	urldate = {2026-03-02},
	journal = {Mathematica Slovaca},
	author = {Carrera, Ricardo E. and Hager, Anthony W.},
	year = {2011},
	mrnumber = {2796252},
	pages = {411--428},
}

@article{masulovic_dual_2017,
	title = {A dual {Ramsey} theorem for permutations},
	volume = {24},
	issn = {1077-8926},
	url = {https://mathscinet.ams.org/mathscinet-getitem?mr=3691556},
	doi = {10.37236/6845},
	number = {3},
	urldate = {2026-03-07},
	journal = {Electronic Journal of Combinatorics},
	author = {Masulović, Dragan},
	year = {2017},
	mrnumber = {3691556},
	pages = {Paper No. 3.39, 12},
}

@article{rosicky_uniqueness_2019,
	title = {On the uniqueness of cellular injectives},
	volume = {167},
	issn = {0305-0041,1469-8064},
	url = {https://mathscinet.ams.org/mathscinet-getitem?mr=4015647},
	doi = {10.1017/s0305004118000439},
	number = {3},
	urldate = {2026-03-07},
	journal = {Mathematical Proceedings of the Cambridge Philosophical Society},
	author = {Rosický, Jiří},
	year = {2019},
	mrnumber = {4015647},
	pages = {489--504},
}

@article{ferenczi_amalgamation_2020,
	title = {Amalgamation and {Ramsey} properties of {$L_p$} spaces},
	volume = {369},
	issn = {0001-8708,1090-2082},
	url = {https://mathscinet.ams.org/mathscinet-getitem?mr=4095753},
	doi = {10.1016/j.aim.2020.107190},
	urldate = {2026-03-07},
	journal = {Advances in Mathematics},
	author = {Ferenczi, Valentin and Lopez-Abad, Jordi and Mbombo, Brice and Todorcevic, Stevo},
	year = {2020},
	mrnumber = {4095753},
	pages = {107190, 76},
}

@article{schneiders_quasi-abelian_1999,
	title = {Quasi-abelian categories and sheaves},
	issn = {0249-633X},
	url = {https://mathscinet.ams.org/mathscinet-getitem?mr=1779315},
	number = {76},
	urlyear = {2022-02-13},
	journal = {Mémoires de la Société Mathématique de France. Nouvelle Série},
	author = {Schneiders, Jean-Pierre},
	year = {1999},
	mrnumber = {1779315},
	pages = {vi+134},
}

@book{pedersen_algebras_1979,
	address = {London},
	series = {London {Mathematical} {Society} {Monographs}},
	title = {C*-algebras and their automorphism groups},
	volume = {14},
	isbn = {0-12-549450-5},
	url = {http://www.ams.org/mathscinet-getitem?mr=548006},
	urlyear = {2013-10-31},
	publisher = {Academic Press Inc.},
	author = {Pedersen, Gert K.},
	year = {1979},
}

@book{koppelberg_handbook_1989,
	title = {Handbook of {Boolean} algebras. {Vol}. 1},
	isbn = {0-444-70261-X},
	url = {http://www.ams.org/mathscinet-getitem?mr=991565},
	urlyear = {2014-05-24},
	publisher = {North-Holland Publishing Co., Amsterdam},
	author = {Koppelberg, Sabine},
	year = {1989},
}

@book{hodges_building_1985,
	series = {London {Mathematical} {Society} {Student} {Texts}},
	title = {Building models by games},
	volume = {2},
	isbn = {0-521-26897-4 0-521-31716-9},
	url = {http://www.ams.org/mathscinet-getitem?mr=812274},
	urlyear = {2014-07-10},
	publisher = {Cambridge University Press, Cambridge},
	author = {Hodges, Wilfrid},
	year = {1985},
	mrnumber = {812274},
}

@article{fraisse_sur_1954,
	title = {Sur l'extension aux relations de quelques propriétés des ordres},
	volume = {71},
	issn = {0012-9593},
	url = {http://www.ams.org/mathscinet-getitem?mr=0069239},
	urlyear = {2014-07-04},
	journal = {Annales Scientifiques de l'École Normale Supérieure. Troisième Série},
	author = {Fraïssé, Roland},
	year = {1954},
	pages = {363--388},
}

@book{blackadar_operator_2006,
	address = {Berlin},
	series = {Encyclopaedia of {Mathematical} {Sciences}},
	title = {Operator {Algebras}},
	volume = {122},
	isbn = {978-3-540-28486-4 3-540-28486-9},
	url = {http://www.ams.org/mathscinet-getitem?mr=2188261},
	urlyear = {2013-10-31},
	publisher = {Springer-Verlag},
	author = {Blackadar, Bruce},
	year = {2006},
}

@article{lacey_injective_1969,
	title = {On injective envelopes of {Banach} spaces},
	volume = {4},
	url = {http://www.ams.org/mathscinet-getitem?mr=0243322},
	urlyear = {2014-10-26},
	journal = {J. Functional Analysis},
	author = {Lacey, H. Elton and Cohen, H. B.},
	year = {1969},
	mrnumber = {0243322},
	pages = {11--30},
}

@article{isbell_three_1969,
	title = {Three remarks on injective envelopes of {Banach} spaces},
	volume = {27},
	issn = {0022-247X},
	url = {http://www.sciencedirect.com/science/article/pii/0022247X69901310},
	doi = {10.1016/0022-247X(69)90131-0},
	number = {3},
	urlyear = {2014-10-26},
	journal = {Journal of Mathematical Analysis and Applications},
	author = {Isbell, John R.},
	month = sep,
	year = {1969},
	pages = {516--518},
}

@article{isbell_injective_1964,
	title = {Injective envelopes of {Banach} spaces are rigidly attached},
	volume = {70},
	issn = {0002-9904, 1936-881X},
	url = {http://www.ams.org/bull/1964-70-05/S0002-9904-1964-11192-7/},
	doi = {10.1090/S0002-9904-1964-11192-7},
	number = {5},
	urlyear = {2014-10-26},
	journal = {Bulletin of the American Mathematical Society},
	author = {Isbell, John R.},
	year = {1964},
	pages = {727--729},
}

@article{cohen_injective_1964,
	title = {Injective envelopes of {Banach} spaces},
	volume = {70},
	issn = {0002-9904, 1936-881X},
	url = {http://www.ams.org/bull/1964-70-05/S0002-9904-1964-11189-7/},
	doi = {10.1090/S0002-9904-1964-11189-7},
	number = {5},
	urlyear = {2014-10-26},
	journal = {Bulletin of the American Mathematical Society},
	author = {Cohen, Henry B.},
	year = {1964},
	pages = {723--726},
}

@article{hamana_injective_1979,
	title = {Injective envelopes of operator systems},
	volume = {15},
	issn = {0034-5318},
	url = {http://www.ems-ph.org/journals/show_abstract.php?issn=0034-5318&vol=15&iss=3&rank=8},
	doi = {10.2977/prims/1195187876},
	
	number = {3},
	urlyear = {2014-10-13},
	journal = {Publications of the Research Institute for Mathematical Sciences},
	author = {Hamana, Masamichi},
	year = {1979},
	pages = {773--785},
}

@article{ben_yaacov_linear_2014,
	title = {The linear isometry group of the {Gurarij} space is universal},
	volume = {142},
	issn = {0002-9939, 1088-6826},
	url = {http://www.ams.org/proc/2014-142-07/S0002-9939-2014-11956-3/},
	doi = {10.1090/S0002-9939-2014-11956-3},
	number = {7},
	urlyear = {2014-10-06},
	journal = {Proceedings of the American Mathematical Society},
	author = {Ben Yaacov, Itaï},
	year = {2014},
	pages = {2459--2467},
}

@article{kubis_fraisse_2014,
	title = {Fraïssé sequences: category-theoretic approach to universal homogeneous structures},
	volume = {165},
	issn = {0168-0072},
	shorttitle = {Fraïssé sequences},
	url = {http://www.sciencedirect.com/science/article/pii/S0168007214000773},
	doi = {10.1016/j.apal.2014.07.004},
	number = {11},
	urlyear = {2014-10-07},
	journal = {Annals of Pure and Applied Logic},
	author = {Kubiś, Wiesław},
	year = {2014},
	pages = {1755--1811},
}

@article{choi_injectivity_1977,
	title = {Injectivity and operator spaces},
	volume = {24},
	issn = {0022-1236},
	url = {http://www.sciencedirect.com/science/article/pii/0022123677900520},
	doi = {10.1016/0022-1236(77)90052-0},
	number = {2},
	urlyear = {2014-08-13},
	journal = {Journal of Functional Analysis},
	author = {Choi, Man-Duen and Effros, Edward G.},
	month = feb,
	year = {1977},
	pages = {156--209},
}

@article{ben_yaacov_fraisse_2015,
	title = {Fraïssé limits of metric structures},
	volume = {80},
	issn = {1943-5886},
	url = {http://journals.cambridge.org/article_S0022481214000711},
	doi = {10.1017/jsl.2014.71},
	number = {1},
	urlyear = {2015-05-12},
	journal = {Journal of Symbolic Logic},
	author = {Ben Yaacov, Itaï},
	year = {2015},
	pages = {100--115},
}

@article{hadwin_injectivity_2011,
	title = {Injectivity and projectivity in analysis and topology},
	volume = {54},
	issn = {1674-7283, 1869-1862},
	url = {http://link.springer.com/article/10.1007/s11425-011-4285-7},
	doi = {10.1007/s11425-011-4285-7},
	
	number = {11},
	urlyear = {2015-09-28},
	journal = {Science China Mathematics},
	author = {Hadwin, Don and Paulsen, Vern I.},
	month = aug,
	year = {2011},
	pages = {2347--2359},
}

@article{hamana_injective_1979-1,
	title = {Injective envelopes of {C}*-algebras},
	volume = {31},
	issn = {0025-5645, 1881-1167},
	url = {http://projecteuclid.org.ezproxy.library.yorku.ca/euclid.jmsj/1240319487},
	doi = {10.2969/jmsj/03110181},
	
	number = {1},
	urlyear = {2014-10-13},
	journal = {Journal of the Mathematical Society of Japan},
	author = {Hamana, Masamichi},
	month = jan,
	year = {1979},
	mrnumber = {MR0519044},
	zmnumber = {0395.46042},
	pages = {181--197},
}

@incollection{hamana_injective_1992,
	address = {Harlow},
	series = {Pitman {Research} {Notes} in {Mathematics} {Series}},
	title = {Injective envelopes of dynamical systems},
	volume = {271},
	booktitle = {Operator algebras and operator theory},
	publisher = {Longman Sci. Tech.},
	author = {Hamana, Masamichi},
	year = {1992},
}

@article{hamana_injective_1978,
	title = {Injective envelopes of {Banach} modules},
	volume = {30},
	issn = {0040-8735},
	url = {http://projecteuclid.org/euclid.tmj/1178229979},
	doi = {10.2748/tmj/1178229979},
	
	number = {3},
	urlyear = {2015-09-28},
	journal = {Tohoku Mathematical Journal},
	author = {Hamana, Masamichi},
	year = {1978},
	pages = {439--453},
}

@article{hamana_injective_1985,
	title = {Injective envelopes of {C}*-dynamical systems},
	volume = {37},
	issn = {0040-8735},
	url = {http://projecteuclid.org/euclid.tmj/1178228589},
	doi = {10.2748/tmj/1178228589},
	
	number = {4},
	urlyear = {2015-09-28},
	journal = {Tohoku Mathematical Journal},
	author = {Hamana, Masamichi},
	year = {1985},
	pages = {463--487},
}

@article{hamana_injective_2011,
	title = {Injective envelopes of dynamical systems},
	volume = {34},
	issn = {1880-6015},
	url = {http://www.ams.org/mathscinet-getitem?mr=2985658},
	urlyear = {2015-09-28},
	journal = {Toyama Mathematical Journal},
	author = {Hamana, Masamichi},
	year = {2011},
	pages = {23--86},
}

@article{lang_injective_2013,
	title = {Injective hulls of certain discrete metric spaces and groups},
	volume = {5},
	issn = {1793-5253},
	url = {http://www.ams.org/mathscinet-getitem?mr=3096307},
	doi = {10.1142/S1793525313500118},
	number = {3},
	urlyear = {2016-01-07},
	journal = {Journal of Topology and Analysis},
	author = {Lang, Urs},
	year = {2013},
	mrnumber = {3096307},
	pages = {297--331},
}

@book{hodges_model_1993,
	series = {Encyclopedia of {Mathematics} and its {Applications}},
	title = {Model theory},
	volume = {42},
	isbn = {0-521-30442-3},
	url = {http://www.ams.org/mathscinet-getitem?mr=1221741},
	urlyear = {2014-11-10},
	publisher = {Cambridge University Press, Cambridge},
	author = {Hodges, Wilfrid},
	year = {1993},
}

@article{masulovic_pre-adjunctions_2018,
	title = {Pre-adjunctions and the {Ramsey} property},
	volume = {70},
	issn = {0195-6698,1095-9971},
	url = {https://mathscinet.ams.org/mathscinet-getitem?mr=3779618},
	doi = {10.1016/j.ejc.2018.01.006},
	urldate = {2026-03-07},
	journal = {European Journal of Combinatorics},
	author = {Masulović, Dragan},
	year = {2018},
	mrnumber = {3779618},
	pages = {268--283},
}

@book{timmermann_invitation_2008,
	series = {{EMS} {Textbooks} in {Mathematics}},
	title = {An invitation to quantum groups and duality},
	isbn = {978-3-03719-043-2},
	url = {http://www.ams.org/mathscinet-getitem?mr=2397671},
	urlyear = {2014-04-10},
	publisher = {European Mathematical Society},
	author = {Timmermann, Thomas},
	year = {2008},
}

@article{van_daele_multiplier_1994,
	title = {Multiplier {Hopf} {Algebras}},
	volume = {342},
	copyright = {Copyright © 1994 American Mathematical Society},
	url = {http://www.jstor.org/stable/2154659},
	doi = {10.2307/2154659},
	number = {2},
	urlyear = {2014-04-08},
	journal = {Transactions of the American Mathematical Society},
	author = {Van Daele, Alfons},
	month = apr,
	year = {1994},
	pages = {917--932},
}

@article{masumoto_jiang-su_2017,
	title = {The {Jiang}-{Su} algebra as a {Fraïssé} limit},
	volume = {82},
	issn = {0022-4812,1943-5886},
	url = {https://mathscinet.ams.org/mathscinet-getitem?mr=3743622},
	doi = {10.1017/jsl.2016.52},
	number = {4},
	urldate = {2026-03-07},
	journal = {The Journal of Symbolic Logic},
	author = {Masumoto, Shuhei},
	year = {2017},
	mrnumber = {3743622},
	pages = {1541--1559},
}

@article{kubis_lelek_2017,
	title = {The {Lelek} fan and the {Poulsen} simplex as {Fraïssé} limits},
	volume = {111},
	issn = {1578-7303},
	url = {https://mathscinet.ams.org/mathscinet-getitem?mr=3690070},
	doi = {10.1007/s13398-016-0339-6},
	number = {4},
	urlyear = {2018-08-22},
	journal = {Revista de la Real Academia de Ciencias Exactas, F{\textbackslash}textbackslash'ı sicas y Naturales. Serie A. Matematicas. RACSAM},
	author = {Kubiś, Wiesław and Kwiatkowska, Aleksandra},
	year = {2017},
	mrnumber = {3690070},
	pages = {967--981},
}

@book{lurie_higher_2009,
	series = {Annals of {Mathematics} {Studies}},
	title = {Higher topos theory},
	volume = {170},
	isbn = {978-0-691-14049-0},
	url = {https://mathscinet.ams.org/mathscinet-getitem?mr=2522659},
	urlyear = {2019-04-14},
	publisher = {Princeton University Press, Princeton, NJ},
	author = {Lurie, Jacob},
	year = {2009},
	mrnumber = {2522659},
	doi = {10.1515/9781400830558},
}

@article{busby_double_1968,
	title = {Double centralizers and extensions of {C}*-algebras},
	volume = {132},
	issn = {0002-9947},
	url = {https://mathscinet.ams.org/mathscinet-getitem?mr=225175},
	doi = {10.2307/1994883},
	urlyear = {2020-04-18},
	journal = {Transactions of the American Mathematical Society},
	author = {Busby, Robert C.},
	year = {1968},
	mrnumber = {225175},
	pages = {79--99},
}

@book{lance_hilbert_1995,
	series = {London {Mathematical} {Society} {Lecture} {Note} {Series}},
	title = {Hilbert {C}*-modules},
	volume = {210},
	isbn = {978-0-521-47910-3},
	url = {https://mathscinet.ams.org/mathscinet-getitem?mr=1325694},
	urlyear = {2022-05-15},
	publisher = {Cambridge University Press, Cambridge},
	author = {Lance, E. Christopher},
	year = {1995},
	mrnumber = {1325694},
	doi = {10.1017/CBO9780511526206},
}

@article{kelly_basic_2005,
	title = {Basic concepts of enriched category theory},
	number = {10},
	journal = {Reprints in Theory and Applications of Categories},
	author = {Kelly, Gregory  M.},
	year = {2005},
	mrnumber = {2177301},
	pages = {vi+137},
}

@article{emmanouil_flat_2011,
	title = {On the flat length of injective modules},
	volume = {84},
	copyright = {© 2011 London Mathematical Society},
	issn = {1469-7750},
	doi = {10.1112/jlms/jdr014},
	
	number = {2},
	urlyear = {2024-12-08},
	journal = {Journal of the London Mathematical Society},
	author = {Emmanouil, Ioannis and Talelli, Olympia},
	year = {2011}
}

@article{facchini_generalized_1994,
	title = {Generalized {Dedekind} domains and their injective modules},
	volume = {94},
	issn = {0022-4049},
	url = {https://www.sciencedirect.com/science/article/pii/0022404994900302},
	doi = {10.1016/0022-4049(94)90030-2},
	number = {2},
	urldate = {2025-06-14},
	journal = {Journal of Pure and Applied Algebra},
	author = {Facchini, Alberto},
	month = jun,
	year = {1994},
	pages = {159--173},
}

@article{matlis_injective_1959,
	title = {Injective modules over {Prüfer} rings},
	volume = {15},
	issn = {0027-7630,2152-6842},
	journal = {Nagoya Mathematical Journal},
	author = {Matlis, Eben},
	year = {1959},
	mrnumber = {109840},
	pages = {57--69},
}

@article{ruan_injectivity_1989,
	title = {Injectivity of operator spaces},
	volume = {315},
	issn = {0002-9947,1088-6850},
	url = {https://mathscinet.ams.org/mathscinet-getitem?mr=929239},
	doi = {10.2307/2001374},
	number = {1},
	urldate = {2026-03-02},
	journal = {Transactions of the American Mathematical Society},
	author = {Ruan, Zhong-Jin},
	year = {1989},
	mrnumber = {929239},
	pages = {89--104},
}

@article{raphael_epimorphic_2000,
	title = {The epimorphic hull of {C}({X})},
	volume = {105},
	issn = {0166-8641},
	url = {https://www.sciencedirect.com/science/article/pii/S016686419900036X},
	doi = {10.1016/S0166-8641(99)00036-X},
	number = {1},
	urldate = {2026-03-02},
	journal = {Topology and its Applications},
	author = {Raphael, Robert M. and Woods, R. Grant},
	month = jul,
	year = {2000},
	pages = {65--88},
}

@article{oikhberg_injectivity_2018,
	title = {Injectivity and projectivity in {$p$}-multinormed spaces},
	volume = {22},
	issn = {1385-1292,1572-9281},
	url = {https://mathscinet.ams.org/mathscinet-getitem?mr=3843454},
	doi = {10.1007/s11117-018-0557-6},
	number = {4},
	urldate = {2026-03-02},
	journal = {Positivity. },
	author = {Oikhberg, Timur},
	year = {2018},
	mrnumber = {3843454},
	pages = {1023--1037},
}

@phdthesis{bryder_boundaries_2017,
	address = {Copenhagen, Denmark},
	type = {{PhD} {Thesis}},
	title = {Boundaries, {Injective} {Envelopes}, and {Reduced} {Crossed} {Producs}},
	school = {University of Copenhagen},
	author = {Bryder, Rasmus Sylverster},
	year = {2017},
}

@incollection{martinez_hull_2002,
	series = {Dev. {Math}.},
	title = {Hull classes of {Archimedean} lattice-ordered groups with unit: a survey},
	volume = {7},
	isbn = {978-1-4020-0752-1},
	shorttitle = {Hull classes of {Archimedean} lattice-ordered groups with unit},
	url = {https://mathscinet.ams.org/mathscinet-getitem?mr=2083035},
	urldate = {2026-03-02},
	booktitle = {Ordered algebraic structures},
	publisher = {Kluwer Acad. Publ., Dordrecht},
	author = {Martínez, Jorge},
	year = {2002},
	mrnumber = {2083035},
	pages = {89--121},
}

@book{heunen_categories_2019,
	series = {Oxford {Graduate} {Texts} in {Mathematics}},
	title = {Categories for quantum theory},
	volume = {28},
	isbn = {978-0-19-873961-6 978-0-19-873962-3},
	url = {https://mathscinet.ams.org/mathscinet-getitem?mr=3971584},
	doi = {10.1093/oso/9780198739623.001.0001},
	urldate = {2026-03-03},
	publisher = {Oxford University Press, Oxford},
	author = {Heunen, Chris and Vicary, Jamie},
	year = {2019},
	mrnumber = {3971584},
	doi = {10.1093/oso/9780198739623.001.0001},
}

@article{bernau_unique_1965,
	title = {Unique representation of {Archimedean} lattice groups and normal {Archimedean} lattice rings},
	volume = {15},
	issn = {0024-6115,1460-244X},
	url = {https://mathscinet.ams.org/mathscinet-getitem?mr=182661},
	doi = {10.1112/plms/s3-15.1.599},
	urldate = {2026-03-05},
	journal = {Proceedings of the London Mathematical Society. Third Series},
	author = {Bernau, Simon J.},
	year = {1965},
	mrnumber = {182661},
	pages = {599--631},
}

@article{conrad_lateral_1969,
	title = {The lateral completion of a lattice-ordered group},
	volume = {19},
	issn = {0024-6115,1460-244X},
	url = {https://mathscinet.ams.org/mathscinet-getitem?mr=244125},
	doi = {10.1112/plms/s3-19.3.444},
	urldate = {2026-03-07},
	journal = {Proceedings of the London Mathematical Society. Third Series},
	author = {Conrad, Paul},
	year = {1969},
	mrnumber = {244125},
	pages = {444--480},
}

@book{davey_introduction_1990,
	series = {Cambridge {Mathematical} {Textbooks}},
	title = {Introduction to lattices and order},
	isbn = {978-0-521-36584-0 978-0-521-36766-0},
	url = {https://mathscinet.ams.org/mathscinet-getitem?mr=1058437},
	urldate = {2026-03-05},
	publisher = {Cambridge University Press, Cambridge},
	author = {Davey, Brian  A. and Priestley, Hilary  A.},
	year = {1990},
	mrnumber = {1058437},
}

@article{banaschewski_categorical_1967,
	title = {Categorical characterization of the {MacNeille} completion},
	volume = {18},
	issn = {0003-889X,1420-8938},
	url = {https://mathscinet.ams.org/mathscinet-getitem?mr=221984},
	doi = {10.1007/BF01898828},
	urldate = {2026-03-05},
	journal = {Archiv der Mathematik},
	author = {Banaschewski, Bernhard and Bruns, Günter W.},
	year = {1967},
	mrnumber = {221984},
	pages = {369--377},
}

@article{banaschewski_strong_2010,
	title = {On the strong amalgamation of {Boolean} algebras},
	volume = {63},
	issn = {1420-8911},
	url = {https://doi.org/10.1007/s00012-010-0072-5},
	doi = {10.1007/s00012-010-0072-5},
	
	number = {2},
	urldate = {2026-03-06},
	journal = {Algebra Universalis},
	author = {Banaschewski, Bernhard},
	month = may,
	year = {2010},
	pages = {235--238},
}

@book{sikorski_boolean_1969,
	edition = {Third},
	series = {Ergebnisse der {Mathematik} und ihrer {Grenzgebiete}, {Band} 25},
	title = {Boolean algebras},
	url = {https://mathscinet.ams.org/mathscinet-getitem?mr=242724},
	urldate = {2026-03-06},
	publisher = {Springer-Verlag New York, Inc., New York},
	author = {Sikorski, Roman},
	year = {1969},
	mrnumber = {242724},
}

@inproceedings{halmos_injective_1961,
	title = {Injective and projective {Boolean} algebras},
	url = {https://mathscinet.ams.org/mathscinet-getitem?mr=137671},
	urldate = {2026-03-06},
	booktitle = {Proc. {Sympos}. {Pure} {Math}., {Vol}. {II}},
	publisher = {Amer. Math. Soc., Providence, RI},
	author = {Halmos, Paul R.},
	year = {1961},
	mrnumber = {137671},
	pages = {114--122},
}

@book{schroder_ordered_2016,
	edition = {Second},
	title = {Ordered sets},
	isbn = {978-3-319-29786-6 978-3-319-29788-0},
	url = {https://mathscinet.ams.org/mathscinet-getitem?mr=3469976},
	doi = {10.1007/978-3-319-29788-0},
	urldate = {2026-03-07},
	publisher = {Birkhäuser/Springer},
	author = {Schröder, Bernd},
	year = {2016},
	mrnumber = {3469976},
	doi = {10.1007/978-3-319-29788-0},
}

@book{rudeanu_sets_2012,
	title = {Sets and ordered structures},
	isbn = {978-1-60805-338-4},
	url = {https://mathscinet.ams.org/mathscinet-getitem?mr=3478116},
	doi = {10.2174/978160805338411201010001},
	urldate = {2026-03-07},
	publisher = {Bentham Science Publishers, Ltd., Oak Park, IL},
	author = {Rudeanu, Sergiu},
	year = {2012},
	mrnumber = {3478116},
	doi = {10.2174/978160805338411201010001},
}

@book{harzheim_ordered_2005,
	series = {Advances in {Mathematics} ({Springer})},
	title = {Ordered sets},
	volume = {7},
	isbn = {978-0-387-24219-4},
	url = {https://mathscinet.ams.org/mathscinet-getitem?mr=2127991},
	urldate = {2026-03-07},
	publisher = {Springer, New York},
	author = {Harzheim, Egbert},
	year = {2005},
	mrnumber = {2127991},
}

@article{cantier_fraisse_2024,
	title = {Fraïssé theory for {Cuntz} semigroups},
	volume = {658},
	issn = {0021-8693},
	url = {https://www.sciencedirect.com/science/article/pii/S0021869324003363},
	doi = {10.1016/j.jalgebra.2024.05.052},
	urldate = {2026-03-07},
	journal = {Journal of Algebra},
	author = {Cantier, Laurent and Vilalta, Eduard},
	month = nov,
	year = {2024},
	pages = {319--364},
}

@article{tursi_separable_2023,
	title = {A separable universal homogeneous {Banach} lattice},
	issn = {1073-7928,1687-0247},
	url = {https://mathscinet.ams.org/mathscinet-getitem?mr=4565693},
	doi = {10.1093/imrn/rnac024},
	number = {7},
	urldate = {2026-03-07},
	journal = {International Mathematics Research Notices.},
	author = {Tursi, Mary Angelica},
	year = {2023},
	mrnumber = {4565693},
	pages = {5438--5472},
}

@article{bryant_fraisse_2021,
	title = {Fraïssé limits for relational metric structures},
	volume = {86},
	issn = {0022-4812,1943-5886},
	url = {https://mathscinet.ams.org/mathscinet-getitem?mr=4347563},
	doi = {10.1017/jsl.2021.65},
	number = {3},
	urldate = {2026-03-07},
	journal = {The Journal of Symbolic Logic},
	author = {Bryant, David and Nies, André and Tupper, Paul},
	year = {2021},
	mrnumber = {4347563},
	pages = {913--934},
}

@article{ghasemi_strongly_2021,
	title = {Strongly self-absorbing {C}*-algebras and {Fraïssé} limits},
	volume = {53},
	issn = {0024-6093,1469-2120},
	url = {https://mathscinet.ams.org/mathscinet-getitem?mr=4275102},
	doi = {10.1112/blms.12474},
	number = {3},
	urldate = {2026-03-07},
	journal = {Bulletin of the London Mathematical Society},
	author = {Ghasemi, Saeed},
	year = {2021},
	mrnumber = {4275102},
	pages = {937--955},
}

@article{ghasemi_universal_2020,
	title = {Universal {AF}-algebras},
	volume = {279},
	issn = {0022-1236,1096-0783},
	url = {https://mathscinet.ams.org/mathscinet-getitem?mr=4097283},
	doi = {10.1016/j.jfa.2020.108590},
	number = {5},
	urldate = {2026-03-07},
	journal = {Journal of Functional Analysis},
	author = {Ghasemi, Saeed and Kubiś, Wiesław},
	year = {2020},
	mrnumber = {4097283},
	pages = {108590, 32},
}

@article{kubis_game-theoretic_2018,
	title = {Game-theoretic characterization of the {Gurarii} space},
	volume = {110},
	issn = {0003-889X,1420-8938},
	url = {https://mathscinet.ams.org/mathscinet-getitem?mr=3742292},
	doi = {10.1007/s00013-017-1088-2},
	number = {1},
	urldate = {2026-03-07},
	journal = {Archiv der Mathematik},
	author = {Kubiś, Wiesław},
	year = {2018},
	mrnumber = {3742292},
	pages = {53--59},
}

@article{kakol_non-archimedean_2017,
	title = {On non-archimedean {Gurarii} spaces},
	volume = {450},
	issn = {0022-247X,1096-0813},
	url = {https://mathscinet.ams.org/mathscinet-getitem?mr=3639084},
	doi = {10.1016/j.jmaa.2017.01.072},
	number = {2},
	urldate = {2026-03-07},
	journal = {Journal of Mathematical Analysis and Applications},
	author = {K\c{a}kol, Jerzy and Kubiś, Wiesław and Kubzdela, Albert},
	year = {2017},
	mrnumber = {3639084},
	pages = {969--981},
}

@article{macneille_partially_1937,
	title = {Partially ordered sets},
	volume = {42},
	issn = {0002-9947,1088-6850},
	url = {https://mathscinet.ams.org/mathscinet-getitem?mr=1501929},
	doi = {10.2307/1989739},
	number = {3},
	urldate = {2026-03-07},
	journal = {Transactions of the American Mathematical Society},
	author = {MacNeille, Holbrook  M.},
	year = {1937},
	mrnumber = {1501929},
	pages = {416--460},
}

@article{rump_essential_2009,
	title = {The essential cover and the absolute cover of a schematic space},
	volume = {114},
	issn = {0010-1354,1730-6302},
	url = {https://mathscinet.ams.org/mathscinet-getitem?mr=2457279},
	doi = {10.4064/cm114-1-6},
	number = {1},
	urldate = {2026-03-02},
	journal = {Colloquium Mathematicum},
	author = {Rump, Wolfgang and Yang, Yi Chuan},
	year = {2009},
	mrnumber = {2457279},
	pages = {53--75},
}

@article{hager_hulls_1999,
	title = {Hulls for various kinds of {$\alpha $}-completeness in {Archimedean} lattice-ordered groups},
	volume = {16},
	issn = {0167-8094,1572-9273},
	url = {https://mathscinet.ams.org/mathscinet-getitem?mr=1740743},
	doi = {10.1023/A:1006323031986},
	number = {1},
	urldate = {2026-03-04},
	journal = {Order.},
	author = {Hager, Anthony W. and Martinez, Jorge},
	year = {1999},
	mrnumber = {1740743},
	pages = {89--103 (2000)},
}

@article{anderson_essential_1979,
	title = {The essential closure of {C}({X})},
	volume = {76},
	issn = {0002-9939,1088-6826},
	url = {https://mathscinet.ams.org/mathscinet-getitem?mr=534378},
	doi = {10.2307/2042905},
	number = {1},
	urldate = {2026-03-02},
	journal = {Proceedings of the American Mathematical Society},
	author = {Anderson, Marlow},
	year = {1979},
	mrnumber = {534378},
	pages = {8--10},
}

@article{bernau_lateral_1975,
	title = {The lateral completion of an arbitrary lattice group},
	volume = {19},
	url = {https://mathscinet.ams.org/mathscinet-getitem?mr=384640},
	urldate = {2026-03-05},
	journal = {Journal of the Australian Mathematical Society},
	author = {Bernau, Simon J.},
	year = {1975},
	mrnumber = {384640},
	pages = {263--289},
}

@article{chen_amalgamable_2019,
	title = {Amalgamable diagram shapes},
	volume = {84},
	issn = {0022-4812,1943-5886},
	url = {https://mathscinet.ams.org/mathscinet-getitem?mr=3922786},
	doi = {10.1017/jsl.2018.87},
	number = {1},
	urldate = {2026-03-07},
	journal = {The Journal of Symbolic Logic},
	author = {Chen, Ruiyuan},
	year = {2019},
	mrnumber = {3922786},
	pages = {88--101},
}

@article{masulovic_categorical_2017,
	title = {Categorical equivalence and the {Ramsey} property for finite powers of a primal algebra},
	volume = {78},
	issn = {0002-5240,1420-8911},
	url = {https://mathscinet.ams.org/mathscinet-getitem?mr=3697187},
	doi = {10.1007/s00012-017-0453-0},
	number = {2},
	urldate = {2026-03-07},
	journal = {Algebra Universalis},
	author = {Masulović, Dragan and Scow, Lynn},
	year = {2017},
	mrnumber = {3697187},
	pages = {159--179},
}

\end{document}